\newtheorem{thm}{Theorem}[section]
 \newtheorem{cor}[thm]{Corollary}
 \newtheorem{lem}[thm]{Lemma}
 \newtheorem{prop}[thm]{Proposition}
\newtheorem*{thmuniquemetric}{Theorem \ref{uniquemetricthm}}
\newtheorem*{thmcmacmsa}{Theorem \ref{unique metric prop}}
\newtheorem*{defcma}{Definition \ref{cmadef}}
\renewcommand\sout[1]{}
\def\jedit#1{\textcolor{black}{#1}}
\def\mu#1{\ifx#1(\mymu(\else\muii#1\fi}
\def\muii#1{\ifx#1\left\mymuii\else{\langle~\rangle}#1\fi}
\long\def\mymu(#1){\left\langle#1\right\rangle}
\long\def\mymuii#1#2){\left\langle#2\right\rangle}
 \theoremstyle{definition}
  \newtheorem{defn}[thm]{Definition}
 \theoremstyle{remark}
 \newtheorem{rem}[thm]{Remark}
  \newtheorem{ex}[thm]{Example}
 \def\diam{\mathrm{diam}\,}
 \def\rank{\mathrm{rank}\,}
 \def\mor{\mathrm{Mor}}
  \def\CMS{\mathcal{CMS}}
  \def\CIS{\mathcal{CIS}}
  \def\CM{\mathcal{CM}}
  \def\CI{\mathcal{CI}}
  \def\id{\mathrm{Id}}
  \def\N{\mathcal N}
  \def\I{\mathcal I}
\def\kappao{{\kappa_0}}
\def\kappav{{\kappa_5}}
\def\kappaiv{{\kappa_4}}
\def\f{\phi}
\def\g{\psi}
\def\0{\mathbf 0}
\def\1{\mathbf 1}
\def\Rp{\mathbb{R}^{+}}
\let\card=\sharp
\begin{document}

\title[Coarse median algebras]{Coarse median algebras: The intrinsic geometry of coarse median spaces and their intervals}

\author{Graham Niblo}
\author{Nick Wright}
\author{Jiawen Zhang}
\address{School of Mathematical Sciences, University of Southampton, Highfield, SO17 1BJ, United Kingdom.}
\email{g.a.niblo@soton.ac.uk}
\email{n.j.wright@soton.ac.uk}
\email{jiawen.zhang@soton.ac.uk}

\date{}
\subjclass[2010]{20F65, 20F67, 20F69}
\keywords{Coarse median spaces, median algebras, coarse interval structures and a canonical metric}

\thanks{The third author was supported for this research by a Fellowship from the Sino-British Trust, International Exchanges 2017 Cost Share (China) grant EC$\backslash$NSFC$\backslash$170341, NSFC11871342 and NSFC11811530291.}
\baselineskip=16pt

\begin{abstract}
This paper establishes a new combinatorial framework for the study of coarse median spaces, bridging the worlds of asymptotic geometry, algebra and combinatorics. We introduce a simple and entirely algebraic notion of coarse median algebra which simultaneously generalises the concepts of bounded geometry coarse median spaces and  classical discrete median algebras.
We  study the coarse median universe from the perspective of  intervals, with a particular focus on cardinality as a proxy for distance. 
 In particular we prove that the metric on a quasi-geodesic coarse median space of bounded geometry can be constructed up to quasi-isometry using only the coarse median operator.
Finally we develop a concept of rank for coarse median algebras in terms of the geometry of intervals and show that the notion of finite rank coarse median algebra provides a natural higher dimensional analogue of Gromov's concept of $\delta$-hyperbolicity.
\end{abstract}

\maketitle

\section{Introduction}
Gromov's notion of a CAT(0) cube complex has played a significant role in  major results in topology, geometry and group theory. Its power stems from the beautiful interplay between the non-positively curved geometry of the space and the median algebra structure supported on the vertices as outlined by Roller \cite{roller1998poc}. Coarse median spaces as introduced by Bowditch \cite{bowditch2013coarse} provide a geometric coarsening of  CAT(0) cube complexes which additionally  includes $\delta$-hyperbolic spaces, mapping class groups and hierarchically hyperbolic groups \cite{behrstock2017hierarchically,behrstock2015hierarchically}.

The interaction between the geometry and combinatorics of a CAT(0) cube complex is mediated by the fact that the edge metric can be computed entirely in terms of the median. In contrast, for a coarse median space the metric is an essential part of the data, as evidenced by the fact that almost any ternary algebra can be made into a coarse median space by equipping it with a bounded metric. This 	prompts the question to what extent there could be a combinatorial  characterisation of coarse medians mirroring the notion of a median algebra. We will provide  the missing combinatorial framework by defining \emph{coarse median algebras}.

\subsection{Bowditch's definition of coarse median space}
\begin{defn}[Bowditch, \cite{bowditch2013coarse}]\label{Bowditch original def}
A \emph{coarse median space} is a triple $(X,d,\mu)$, where $(X,d)$ is a metric space and $\mu$ is a ternary operator on $X$ satisfying the following:

\begin{itemize}
\item[(M1)] For all $a,b\in X$,  $\mu(a,a,b)=a$;
\item[(M2)] For all $a,b,c\in X$, $\mu(a,b,c)=\mu(a,c,b)=\mu(b,a,c)$;
\item[(B1)\phantom{'}] There are constants $k, h(0)$ such that for all $a,b,c,a',b',c'\in X$ we have
\[
d(\mu(a,b,c),\mu(a',b',c'))\leq k\left(d(a,a')+d(b,b')+d(c,c')\right) + h(0);
\]
\item[(B2)\phantom{'}] There is a function $h:\mathbb N\rightarrow \Rp$ with the following property. Suppose that $A\subseteq X$ with $1\leq |A| \leq p < \infty$, then there is a finite median algebra $(\Pi, \mu_\Pi)$ and maps $\pi:A\rightarrow \Pi$ and $\lambda:\Pi\rightarrow X$ such that for all $x,y,z\in \Pi$ we have
\[
d\big(\lambda(\langle x,y,z\rangle_\Pi), \langle{\lambda(x)}, {\lambda(y)}, {\lambda(z)}\rangle\big) \leq h(p),
\]
and for all $a \in A$ we have
\[
d(a, \lambda\pi(a))\leq h(p).
\]
\end{itemize}
\end{defn}

The metric plays the crucial role of measuring  and controlling the extent to which the  ternary operator (referred to as \emph{the coarse median}) approximates a classical median operator. Our observation is that the additional metric data can be replaced by the structure of the intervals in the space which are intrinsic to the median operator: the cardinality of intervals serves as a proxy for distance.\footnote{This is perhaps counterintuitive: firstly because interval cardinality is far from being a metric, and secondly because even in a geodesic coarse median space the geodesic between two points can lie well outside the corresponding interval (see \cite[Theorem 5.1]{niblo2017four}).}

\subsection{Coarse median algebras} We now define the new notion of coarse median algebra as the algebraic parallel of coarse median spaces.

Recall that a \emph{ternary algebra} is a set $X$ equipped with a function $\mu:X^3\rightarrow X$ where $\mu(x,y,z)$ denotes the value at $(x,y,z)$.

\begin{defn}\label{interval}
Let $(X,\mu)$ be a ternary algebra. For any $a,b\in X$, \emph{the interval $[a,b]$} is the set $\{\mu(a,x,b)\mid x\in X\}$. We say that $(X,\mu)$ has \emph{finite intervals} if for every $a,b\in X$ the interval $[a,b]$ is a finite set.
\end{defn}

A \emph{discrete} median algebra, which is familiar to geometric group theorists as the vertex set of a CAT(0) cube complex, is simply a median algebra with finite intervals (see for example, \cite{roller1998poc}).

\begin{defn}\label{cmadef}
A  \emph{coarse median algebra} is a ternary algebra $(X,\mu)$ with finite intervals such that:
\begin{itemize}
\item[(M1)\phantom{'}] For all $a,b\in X$,  $\mu(a,a,b)=a$;
\item[(M2)\phantom{'}] For all $a,b,c\in X$, $\mu(a,b,c)=\mu(a,c,b)=\mu(b,a,c)$;
\item[(M3)'] There exists a constant $K\geq 0$ such that for all $a,b,c,d,e\in X$ the cardinality of the interval
\;$\big[\mu(a,b,\mu(c,d,e)),\, \mu(\mu(a,b,c),\mu(a,b,d),e)\big]$\; is at most $K$.
\end{itemize}
\end{defn}

\begin{figure}[htbp] 
   \centering
   \includegraphics[width=6.5in]{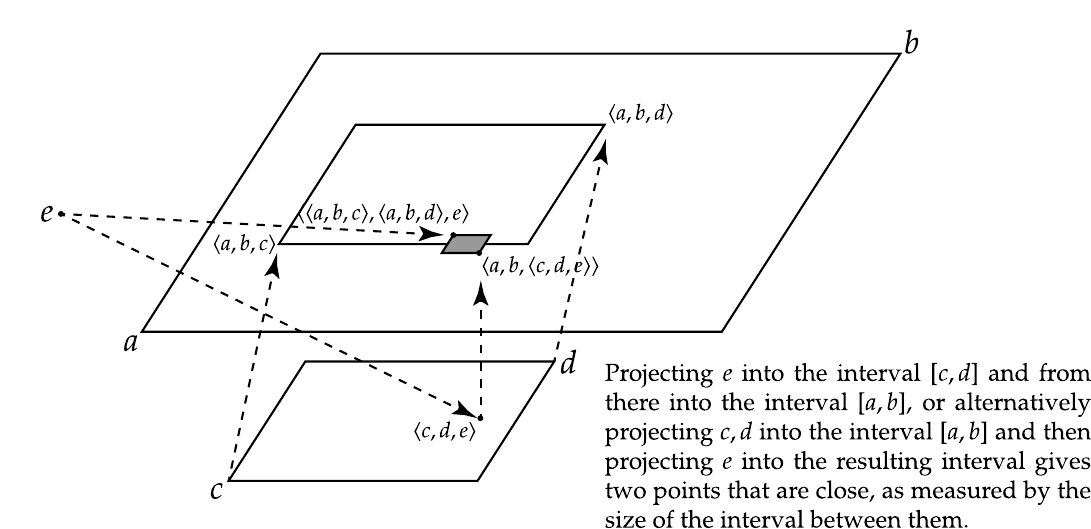}
   \caption{An illustration of the meaning of condition (M3)'}
   \label{fig:example}
\end{figure}
Putting $K=1$ in the definition reduces (M3)' to the classical 5-point condition $\mu(a,b,\mu(c,d,e))= \mu(\mu(a,b,c),\mu(a,b,d),e)$ defining a median algebra, so Definition \ref{cmadef}  generalises the notion of discrete median algebra.

\subsection{The induced metric}
At first sight the data defining a coarse median algebra appears to carry a lot less information than Bowditch's coarse median spaces. However as we will see in Section \ref{coarsemedianalgebras}, the finite intervals condition will allow us to define a metric $d_\mu$ on the ternary algebra purely in terms of the operator $\mu$. Moreover any bounded geometry\footnote{Throughout the paper, we only consider the notion of bounded geometry in the setting of discrete metric spaces. See Definition \ref{defn:basic metric}(3).} coarse median space is a coarse median algebra. Indeed generalising the notion of bounded valency for a graph (see Definition \ref{bounded valency def}), we have the following equivalence:

\newcommand{\cmacmsthma}{
Let $(X,\mu)$ be a bounded valency ternary algebra. The following are equivalent:
\begin{enumerate}
\item $(X,\mu)$ is a coarse median algebra;
\item $(X,d_\mu, \mu)$ is a coarse median space;
\item There exists a metric $d$ such that $(X,d, \mu)$ is a bounded geometry coarse median space.
\end{enumerate}
}
\begin{thm}\label{unique metric prop}
 \cmacmsthma
\end{thm}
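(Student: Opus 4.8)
The statement asserts that the compatible metric is unique up to quasi-isometry: any two metrics $d_1,d_2$ for which $(X,d_i,\mu)$ is a bounded geometry coarse median space are quasi-isometric via the identity map. The plan is to prove this by exhibiting a single metric built solely from $\mu$ to which \emph{every} compatible metric is quasi-isometric. The canonical object is an \emph{interval threshold graph}: fix a threshold $T$ and let $G_T$ be the graph on $X$ joining distinct $a,b$ by an edge whenever $|[a,b]|\le T$, and let $\rho_T$ be its path metric. Since intervals are defined purely from $\mu$, both $G_T$ and $\rho_T$ depend only on the ternary operator and on $T$; the bounded valency hypothesis guarantees $G_T$ is locally finite, so $\rho_T$ is itself a sensible metric of bounded geometry. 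I would then show that for every metric $d$ making $(X,d,\mu)$ a bounded geometry coarse median space, and every sufficiently large $T$, the identity is a quasi-isometry $(X,d)\to(X,\rho_T)$. Applying this to $d_1,d_2$ with a common large threshold yields $d_1\asymp\rho_T\asymp d_2$.

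The core estimate, and the step most on the theme of the paper, is the lower bound $|[a,b]|\gtrsim d(a,b)$, which I would establish without appealing to metric geodesics, since these may leave the interval as the footnote warns. Instead I project a quasi-geodesic into the interval: choosing a quasi-geodesic $a=\gamma_0,\dots,\gamma_n=b$ with $n\asymp d(a,b)$ and bounded steps, I set $x_i=\mu(a,\gamma_i,b)$. By (M1) and (M2) the endpoints are $x_0=a$ and $x_n=b$, every $x_i$ lies in $[a,b]$ by definition of the interval, and (B1) forces $d(x_i,x_{i+1})\le S$ for a uniform $S$. Hence the real numbers $d(a,x_i)$ move by at most $S$ at each step, run from $0$ to $d(a,b)$, and so take at least $d(a,b)/S$ distinct values; distinct values force distinct points, all inside $[a,b]$, giving $|[a,b]|\ge d(a,b)/S$. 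Equivalently an edge of $G_T$ has $d$-length at most $ST$, so $d\le ST\,\rho_T$, which is the inequality $d\lesssim\rho_T$.

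For the reverse inequality $\rho_T\lesssim d$ I would use that intervals are coarsely thin: by the coarse betweenness property of the median, so that $d(a,\mu(a,z,b))+d(\mu(a,z,b),b)\le d(a,b)+C$, one has $[a,b]\subseteq\overline{B}(a,d(a,b)+C)$. Thus if $d(a,b)\le R$ then $[a,b]$ sits in a ball of radius $R+C$, which by bounded geometry contains at most $V(R+C)$ points; taking $T\ge V(R+C)$ makes any pair at $d$-distance $\le R$ an edge of $G_T$. Feeding in a $d$-quasi-geodesic from $a$ to $b$ with step $\le R$ and length $\asymp d(a,b)$ then produces a $G_T$-path of comparable length, so $\rho_T(a,b)\lesssim d(a,b)$. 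This simultaneously shows $G_T$ is connected, so $\rho_T$ is finite.

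Assembling, for $T\ge\max\{V_1(R_1+C_1),\,V_2(R_2+C_2)\}$ both $d_1$ and $d_2$ are quasi-isometric to the $\mu$-intrinsic metric $\rho_T$, whence $d_1$ and $d_2$ are quasi-isometric. The main obstacle is making every constant uniform and independent of the individual metric, so that a single threshold serves both: the lower bound $|[a,b]|\gtrsim d(a,b)$ must be proved through the interval-internal chain $x_i=\mu(a,\gamma_i,b)$ rather than through a genuine geodesic, and the coarse betweenness and bounded-geometry inputs for the upper bound must be invoked with constants depending only on the coarse median and valency data. Granting the coarse betweenness of $\mu$ (from Bowditch, or an earlier lemma), the remaining assembly is routine bookkeeping of quasi-isometry constants.
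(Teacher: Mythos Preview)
You have misread the statement. Theorem~\ref{unique metric prop} does \emph{not} assert uniqueness of the metric up to quasi-isometry; that is Theorem~\ref{uniquemetricthm}. The statement you are asked to prove is the biconditional: a bounded valency ternary algebra $(X,\mu)$ admits \emph{some} bounded geometry metric $d$ making $(X,d,\mu)$ a coarse median space \emph{if and only if} $(X,\mu)$ is a coarse median algebra (i.e.\ satisfies (M1), (M2), and the bounded-interval five-point condition (M3)'). Your entire proposal addresses the wrong claim.

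The paper's proof has two halves. For the reverse implication, if $(X,d,\mu)$ is a bounded geometry coarse median space then (C2) and Lemma~\ref{finiteness} immediately give (M3)', since the two iterated medians are uniformly close and balls have bounded cardinality. For the forward implication, one imposes the induced metric $d_\mu$ (built from interval-chain cardinalities), checks bounded geometry via bounded valency, reads (C2) off from (M3)', and then---this is the substantive step---establishes (C1) by showing that each point of $[\mu(x_i,b,c),\mu(x_{i+1},b,c)]$ is within a $K$-interval of a point of the form $\mu(\mu(z,x_i,x_{i+1}),b,c)$, whence bounded valency bounds $\card[\mu(x_i,b,c),\mu(x_{i+1},b,c)]$ by $\phi(K)\cdot\card[x_i,x_{i+1}]$.

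A secondary issue: even as a proof of Theorem~\ref{uniquemetricthm}, your ``coarse betweenness'' inequality $d(a,\mu(a,z,b))+d(\mu(a,z,b),b)\le d(a,b)+C$ is not available in this generality. The paper instead uses Lemma~\ref{finiteness}, which gives only $[a,b]\subseteq B(a,\rho(d(a,b)))$ from axiom (C1); this suffices for the interval-cardinality bound but is weaker than what you assume.
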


As an application of these ideas we show that for any bounded geometry quasi-geodesic coarse median space, the metric is uniquely determined by the coarse median operator up to quasi-isometry.

\begin{thm}\label{uniquemetricthm}\label{bi-lip equi}
For a bounded geometry quasi-geodesic coarse median space $(X,d,\mu)$, the metric $d$ is unique up to quasi-isometry.
\end{thm}

Indeed within this equivalence class of metrics there is a canonical representative $d_\mu$ defined purely in terms of the  coarse median operator $\mu$ (see Theorem \ref{d qi to d_mu}).

This theorem fails without the quasi-geodesic condition as shown by Example \ref{F infty}, but the failure in this example is suggestive. It is interesting to speculate to what extent the non-uniqueness could be described.

\subsection{Rank}
As well as providing a relatively simple characterisation of a coarse median operator, our combinatorial approach introduces a new perspective on the notion of rank in the coarse median world. We provide three new ways to characterise rank each of which  is a higher rank analogue of one of the classical characterisations of Gromov's $\delta$-hyperbolicity:

\begin{table}[h]
\begin{center}
\tabulinesep=1mm
\begin{tabu} to 1
\textwidth{|X[1.8,c]|X[3.2,c]|}
\hline
\textbf{Hyperbolic spaces} & \textbf{Coarse median spaces/algebras of rank $n$} \\  \hline
approximating finite subsets by trees & approximating finite subsets by CAT(0) cube complexes of dimension $n$ \cite{bowditch2013coarse} \\  \hline
Gromov's inner product (``thin squares'') condition & thin $(n+1)$-cubes condition: Theorem \ref{hyper rank} (3) and Lemma \ref{cma rank lemma} \\  \hline
\strut slim triangle condition & $(n+1)$-multi-median condition: Theorem \ref{hyper rank} (2) \\  \hline
pencils of quasi-geodesics grow linearly & interval growth is $o(n+1)$: Theorem \ref{growth rank} \\ \hline
\end{tabu}
\end{center}
\end{table}

\bigskip

\newcommand{\hyperrankthm}{Let $(X,d,\mu)$ be a coarse median space and $n \in \mathbb N\setminus \{0\}$, then the following are equivalent:
\begin{enumerate}[1)]
  \item $\rank X \leqslant n$;
  \item \emph{Multi-median condition:} There exists a non-decreasing function $\psi$ such that for any $\lambda>0$ and any $x_1,\ldots,x_{n+1},q \in X$, we have
      $$\bigcap_{i \neq j} \N_\lambda([x_i,x_j]) \subseteq \bigcup_{i=1}^{n+1} \N_{\psi(\lambda)}([x_i,q]);$$
  \item \emph{Thin $(n\!+\!1)$-cubes condition:} There exists a non-decreasing function $\varphi$, such that
      \begin{equation*}
        \min\{d(p,\langle x_i,p,q\rangle):i=1,\ldots,n+1\} \leqslant \varphi(\max\{d(p,\langle x_i,x_j,p\rangle): i\neq j\})
      \end{equation*}
      for any $x_1,\ldots,x_{n+1}$ and $p,q \in X$.
\end{enumerate}}

\newcommand{\growthrankthm}{Let $(X,d,\mu)$ be a uniformly discrete, quasi-geodesic coarse median space with bounded geometry and $n$ be a natural number. The following are equivalent:
\begin{enumerate}
\item $(X,d,\mu)$ has rank at most $n$;
\item there is a function $p: \Rp\to \Rp$ with $p(r)=o(r^{n+\epsilon})$ for all $\epsilon>0$, such that $\card~ [a,b] \leqslant p(d(a,b))$ for any $a,b\in X$;
\item there is a function $p: \Rp \to \Rp$ with $p(r)/r^{n+1}\stackrel{r\rightarrow\infty}{\longrightarrow}0$, such that $\card~ [a,b] \leqslant p(d(a,b))$ for any $a,b\in X$.
\end{enumerate}}

The thin $(n+1)$-cubes condition reduces, in the case of $n=1$, to the existence of a non-decreasing function $\varphi$ such that for all $a,b,c$ and $p$ we have
\begin{equation*}
  \min\{d(p,\mu(a,b,p)),d(p,\mu(b,c,p))\} \leqslant \varphi(d(p,\mu(a,c,p))).
\end{equation*}
For geodesic coarse median spaces, this is a characterisation of hyperbolicity (see Section \ref{generalised hyperbolicity}).

The above inequality has the virtue that it is quasi-isometry invariant: the disadvantage of Gromov's 4-point condition, when applied to non-geodesic spaces, is that it is not. Hence the class of quasi-geodesic coarse median spaces satisfying our variant of the 4-point condition is closed under quasi-isometry, so we propose the class of rank 1 coarse median algebras as a more robust generalisation of hyperbolicity beyond the (quasi)-geodesic world.

\subsection{Outline of the paper}
The paper is organised as follows. In Section \ref{preliminaries}, we recall background definitions including coarse median spaces, their ranks and \v{S}pakula \& Wright's notion of iterated coarse median operators from \cite{niblo2017four,vspakula2017coarse}.

In Section \ref{coarseintervalstructures}, by analogy with Sholander's results for median algebras and interval structures \cite{sholander1954medians}, we give a characterisation of coarse median spaces entirely in terms of their intervals.

In Section \ref{growth section}, we introduce and study characterisations of rank in the context of coarse interval structures and show that the correspondences from Section \ref{coarseintervalstructures} preserve rank for coarse median spaces.

In Section \ref{coarsemedianalgebras}, we study the intrinsic metric on a ternary algebra and show that it is unique up to quasi-isometry for any quasi-geodesic coarse median space of bounded geometry. Motivated by this in Section \ref{coarsemedianalg}, we study the geometry of coarse median algebras.  We establish that these simultaneously generalise the notions of:

\begin{enumerate}
\item Classical discrete median algebras;
\item Geodesic hyperbolic spaces of bounded geometry;
\item Bounded geometry coarse median spaces.
\end{enumerate}

The correspondences established in this paper can also be couched as correspondences between (or equivalences of) suitable categories. In the Appendix we examine the notion of morphisms and the definitions of the functors required by that approach.

\subsection*{Acknowledgements}
We would like to thank the anonymous referee for many useful suggestions to make the paper more readable.

\section{Preliminaries}\label{preliminaries}
We follow the conventions established in \cite{niblo2017four}.

\subsection{Metrics and geodesics}

\begin{defn}\label{defn:basic metric}
Let $(X,d)$ be a metric space.
\begin{enumerate}
  \item A subset $A \subseteq X$ is \emph{bounded}, if its diameter $\diam (A):=\sup\{d(x,y):x,y\in A\}$ is finite; $A$ is a \emph{net} in $X$, if there exists some constant $C>0$ such that for any $x\in X$, there exists some $a\in A$ such that $d(a,x) \leqslant C$.
  \item The metric space $(X,d)$ is said to be \emph{uniformly discrete} if there exists a constant $C>0$ such that for any $x \neq y \in X$, $d(x,y)>C$.
  \item The metric space $(X,d)$ is said to have \emph{bounded geometry} if, for any $r>0$, there exists some constant $n \in \mathbb N$ such that $\card~B(x,r) \leqslant n$ for any $x\in X$.
  \item Points $x,y\in X$ are said to be \emph{$s$-close} (with respect to the metric $d$) if $d(x,y)\leqslant s$. If $x$ is $s$-close to $y$, we write $x\thicksim_s y$. Maps $f,g:X\to Y$ are said to be \emph{$s$-close}, written $f\thicksim_s g$, if $f(x)\thicksim_s g(x)$ for all $x\in X$.
\end{enumerate}
\end{defn}

\begin{defn} Let $(X,d), (Y,d')$ be metric spaces and $L,C>0$ be constants.
\begin{enumerate}
  \item An \emph{$(L,C)$-large scale Lipschitz map} from $(X,d)$ to $(Y,d')$ is a map $f:X\rightarrow Y$ such that for any $x,x'\in X$, $d'(f(x),f(x'))\leqslant Ld(x,x')+C$.
  \item An \emph{$(L,C)$-quasi-isometric embedding} from $(X,d)$ to $(Y,d')$ is a map $f:X\rightarrow Y$ such that for any $x,x'\in X$, $L^{-1} d(x,x')-C\leqslant d'(f(x),f(x'))\leqslant Ld(x,x')+C$.
   \item An \emph{$(L,C)$-quasi-isometry} from $(X,d)$ to $(Y, d')$ is an $(L,C)$-large scale Lipschitz map $f:X\rightarrow Y$ such that there exists another $(L,C)$-large scale Lipschitz map $g: Y \to X$ with $f\circ g\thicksim_C \id_Y$ and $g\circ f\thicksim_C \id_X$.
  \item $(X,d)$ is said to be \emph{$(L,C)$-quasi-geodesic}, if for any two points $x,y\in X$ there exists an $(L,C)$-quasi-isometric embedding of the interval $[0, d(x,y)]$ into $X$ taking the endpoints to $x,y$ respectively.
      If we do not care about the constant $C$ we say that $(X,d)$ is \emph{$L$-quasi-geodesic}. If $(X,d)$ is $(1,0)$-quasi-geodesic then we say that $X$ is \emph{geodesic}.
\end{enumerate}
We will take the liberty of omitting the  parameters $L,C$ where their values are not germane to the discussion.
\end{defn}

\begin{defn}
Let $(X,d), (Y,d')$ be metric spaces, $\rho: \Rp \to \Rp$ a proper function and $C>0$ a constant.
\begin{enumerate}
  \item A \emph{$\rho$-bornologous map} from $(X,d)$ to $(Y,d')$ is a function $f:X\rightarrow Y$ such that for all $x,x'\in X$, $d'(f(x), f(x')) \leqslant \rho(d(x,x'))$.
   \item $f$ is \emph{proper} if given any bounded subset $B \subseteq Y$, $f^{-1}(B)$ is bounded.
  \item A \emph{$\rho$-coarse map} from $(X,d)$ to $(Y,d')$ is a proper $\rho$-bornologous map.
  \item A \emph{$(\rho,C)$-coarse equivalence} from $(X,d)$ to $(Y, d')$ is a $\rho$-coarse map $f:X\rightarrow Y$ such that there exists another $\rho$-coarse map $g: Y \to X$ with $f\circ g\thicksim_C \id_Y$ and $g\circ f\thicksim_C \id_X$. In this case, $g$ is called a $(\rho,C)$-\emph{coarse inverse} of $f$.
\end{enumerate}
When the parameters  $\rho,C$  are not germane to the discussion we omit them.
\end{defn}

\subsection{Median Algebras}\label{medalg}
As discussed in \cite{bandelt1983median} there are a number of equivalent formulations of the axioms for median algebras. We will use the following formulation from \cite{birkhoff1947ternary}:

\begin{defn}\label{median algebra defn}
Let $X$ be a set and $\mu$ a ternary operation on $X$. Then $\mu$ is a \emph{median operator} and the pair $(X,\mu)$ is a \emph{median algebra} if the following are satisfied:

\begin{itemize}
  \item[(M1) Localisation:] $\mu(a,a,b)=a$ for all $a,b\in X$;
  \item[(M2) Symmetry:] $\mu(a_1,a_2,a_3)=\langle a_{\sigma(1)},a_{\sigma(2)},a_{\sigma(3)}\rangle$ for all $a_1,a_2,a_3\in X$ and permutation $\sigma$ of $\{1,2,3\}$;
  \item[(M3) The 5-point condition:]  $\mu(a,b,\mu(c,d,e))= \mu(\mu(a,b,c),\mu(a,b,d),e)$ for all $a,b,c,d,e\in X$.
\end{itemize}
\end{defn}

Axiom (M3) is equivalent to the $4$-point condition given in \cite{kolibiar1974question}, see also \cite{bandelt2008metric}:
\begin{equation}\label{four point}
\mu(\mu(a,b,c),b,d)=\mu(a,b, \mu(c,b,d)).
\end{equation}
This can be viewed as an associativity axiom: For each $b\in X$ the binary operator
\[(a,c)\mapsto a*_b c:=\mu(a,b,c)\]
is associative. Note that this binary operator is also commutative by (M2).

\begin{ex}\label{mediancube}
An important example is furnished by the \emph{median $n$-cube}, denoted by $I^n$, which is the $n$-dimensional vector space over $\mathbb Z_2$ with the median operator $\mu_n$ given by  majority vote on each coordinate. More generally as remarked in the introduction, a discrete median algebra is one in which the intervals $\{\mu(a,x,b)\mid x\in X\}$ are finite. These algebras are precisely the ones that arise as the vertex sets of CAT(0) cube complexes.
\end{ex}

\subsection{Coarse median spaces}\label{def of cms}

In \cite{niblo2017four} we showed how to replace Bowditch's original definition of a coarse median space (Definition \ref{Bowditch original def}), involving $n$-point approximations for all $n$, in terms of a $4$-point condition mirroring the classical $4$-point condition for median algebras. This may also be viewed as an analogue of Gromov's $4$-point condition for hyperbolicity.

\begin{prop}[Theorem 3.1, \cite{niblo2017four}]\label{def:coarse median space}
A triple $(X,d,\mu)$ is a coarse median space if the pair $(X,d)$ is a metric space and $\mu$ is a ternary operator satisfying axioms (M1), (M2) together with the following:
\begin{itemize}
 \item[(C1) {\rm Affine control:}] There exists an affine function $\rho:[0,+\infty)\to [0,+\infty)$ such that for all $a,a',b,c\in X$, we have
      $$d(\mu(a,b,c), \mu(a',b,c)) \leqslant \rho(d(a,a'));$$
  \item[(C2) \emph{Coarse 4-point condition}:] There exists a constant $\kappaiv>0$ such that for any $a,b,c,d\in X$, we have
\[
\mu(\mu(a,b,c),b,d) \thicksim_{\kappaiv} \mu(a,b, \mu(c,b,d)).
\]
 \end{itemize}
\end{prop}

In the same way that axiom (M3) for a median algebra is equivalent to the $4$-point condition (\ref{four point}), Bowditch's condition (B2) for a coarse median space ensures that there exists a constant $\kappav>0$ such that for any five points $a,b,c,d,e\in X$ we have
\begin{equation}\label{five point estimate}
\mu(a,b,\mu(c,d,e)) \thicksim_\kappav \mu(\mu(a,b,c),\mu(a,b,d),e).
\end{equation}
By Proposition \ref{def:coarse median space}, the constant $\kappav$ depends only on the parameters $\rho$ and $\kappaiv$. However it is convenient to carry it with us in calculations. With this in mind we make the following definition.

\begin{defn}
We define the notion of \emph{parameters} for a coarse median space $(X,d,\mu)$ to be any 3-tuple $(\rho, \kappaiv, \kappav)$ of constants satisfying the axioms in Definition \ref{coarse median operator} together with estimate (\ref{five point estimate}). Writing the control function $\rho$ in the form of $\rho(t)=Kt+H_0$ for some positive constants $K$ and $H_0$, we also refer to the 4-tuple $(K,H_0,\kappaiv, \kappav)$ as parameters of $(X,d,\mu)$.
\end{defn}

As remarked by  Bowditch \cite{bowditch2013coarse} (in the discussion following Lemma 8.1 there), one can relax axioms (M1) and (M2) without loss to the following:
\begin{itemize}
  \item[(C0) \emph{Coarse localisation and coarse symmetry}:] There is a constant $\kappao >0$ such that for all points $a_1,a_2,a_3$ in $X$, $\mu(a_1,a_1,a_2)\thicksim_{\kappao} a_1$ and $\langle a_{\sigma(1)},a_{\sigma(2)},a_{\sigma(3)}\rangle \thicksim_{\kappao} \mu(a_1,a_2,a_3)$ for any permutation $\sigma$ of $\{1,2,3\}$.
\end{itemize}
Any such ternary operator can be replaced by an operator  satisfying the localisation and symmetry conditions (M1) and (M2) of Definition \ref{median algebra defn} at the cost of moving the values $\mu(a,b,c)$ only a uniformly bounded distance. These axioms are more robust under coarse constructions so we make the following definition:

\begin{defn}\label{coarse median operator}
A \emph{coarse median structure} on a metric space $(X,d)$ is a triple $(X,d,\mu)$ satisfying axioms (C0), (C1) and (C2). Parameters for the structure are given by the function $\rho$ from (C1) together with the constants $\kappao, \kappaiv, \kappav$.
\end{defn}

\begin{rem}\label{unifclosestructure}
The discussion above can be summarised as the assertion that any coarse median structure $(X,d,\mu)$ can be replaced by a coarse median space $(X,d,\mu')$ such that the maps $\mu$ and $\mu'$ are uniformly close. Abusing terminology, we say that the space is uniformly close to the structure.
\end{rem}

\subsection{Rank for a coarse median space}
As in the case of median algebras, there is a notion of \emph{rank} for a coarse median space. In terms of Bowditch's original definition of coarse medians, the rank is simply the least upper bound on the ranks of the required approximating median algebras given by condition (B2).

Using the formulation of coarse median given in Definition \ref{coarse median operator} (which only indirectly implies the existence of approximations for all finite subsets by median algebras), a characterisation of ranks in terms of suitable embeddings of cubes is more useful.

\begin{defn}
For a ternary algebra $(X,\mu_X)$ and a coarse median space $(Y,d_Y,\mu_Y)$, a map $f:X \to Y$ is said to be a \emph{$C$-quasi-morphism} for some $C>0$ if for $a,b,c\in X$, we have $\langle f(a),f(b),f(c)\rangle_Y\thicksim_C f(\mu(a,b,c)_X)$.
\end{defn}

\begin{prop}[Theorem 4.11, \cite{niblo2017four}]\label{char for high rank-final}
Let $(X,d,\mu)$ be a coarse median space and $n\in \mathbb{N}$. Then the following conditions are equivalent.
\begin{enumerate}
  \item $\rank X \leqslant n$;
  \item For any $\lambda>0$, there exists a constant $C=C(\lambda)$ such that for any $a,b\in X$ and $e_1,\ldots,e_{n+1}\in[a,b]$ with $\langle e_i,a,e_j\rangle\thicksim_\lambda a$ ($i\neq j$), one of the points $e_i$ is $C$-close to $a$;
  \item For any $L>0$, there exists a constant $C=C(L)$ such that for any $L$-quasi-morphism $\sigma$ from the median cube $I^{n+1}$ to $X$, the image  $\sigma(\bar{e}_i)$ of one of the cube vertices $\bar{e}_i$ adjacent to the origin $\bar \0$ is $C$-close to the image $\sigma(\bar \0)$.
\end{enumerate}
\end{prop}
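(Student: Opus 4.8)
The strategy is to deduce the three equivalences from the dictionary between the ``interval'' picture of condition~(2) and the ``cube'' picture of condition~(3), together with the two implications $(1)\Rightarrow(2)$ and $(2)\Rightarrow(1)$; the first pair is essentially combinatorial, while the geometric content is concentrated in the second pair, where one passes through Bowditch's approximation axiom~(B2). The dictionary $(2)\Leftrightarrow(3)$ rests on the two identities in the median cube $I^{n+1}$: for $i\neq j$ one has $\bar e_i=\langle\bar\0,\bar e_i,\bar\1\rangle$ and $\bar\0=\langle\bar e_i,\bar\0,\bar e_j\rangle$, both by majority vote on coordinates. Given an $L$-quasi-morphism $\sigma\colon I^{n+1}\to X$, put $a=\sigma(\bar\0)$, $b=\sigma(\bar\1)$, $e_i=\sigma(\bar e_i)$: the first identity puts each $e_i$ within a constant of a point of $[a,b]$ and the second puts $\langle e_i,a,e_j\rangle$ within a constant of $a$, so after absorbing these errors with (C1) (replacing $e_i$ by the projection $\langle a,e_i,b\rangle\in[a,b]$) condition~(2) applies and returns some $e_i$, hence some $\sigma(\bar e_i)$, close to $\sigma(\bar\0)$, which is~(3). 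Conversely, given $a,b$ and $e_1,\dots,e_{n+1}\in[a,b]$ with $\langle e_i,a,e_j\rangle\thicksim_\lambda a$, I would build a quasi-morphism $\sigma\colon I^{n+1}\to X$ with $\sigma(\bar\0)=a$ and $\sigma(\bar e_i)=e_i$, extending over the remaining vertices by the iterated coarse median operators recalled in Section~\ref{preliminaries}, and check with (C1), (C2) and the near-independence hypothesis that $\sigma$ is an $L(\lambda)$-quasi-morphism; then~(3) yields~(2). The only genuinely used feature is that the $e_i$ are \emph{pairwise} near-independent at $a$, which is exactly what prevents the iterated median from drifting.

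For $(1)\Rightarrow(2)$, given $\rank X\leqslant n$ and a configuration $a,b,e_1,\dots,e_{n+1}\in[a,b]$ with $\langle e_i,a,e_j\rangle\thicksim_\lambda a$, apply (B2) to the finite set $A=\{a,b,e_1,\dots,e_{n+1}\}$, obtaining a median algebra $\Pi$ with $\dim\Pi\leqslant n$ and maps $\pi\colon A\to\Pi$, $\lambda'\colon\Pi\to X$ satisfying the $h(n+3)$-approximation properties. Pushing the data into $\Pi$, projecting $\pi(e_i)$ onto the interval $[\pi(a),\pi(b)]$, and using that $\lambda'$ is a coarse morphism with $\lambda'\pi\thicksim\id_A$, one reduces to the following statement about median algebras: \emph{if $\Pi$ has dimension $\leqslant n$ and $f_1,\dots,f_{n+1}\in[p,q]$ satisfy $d_\Pi(\langle f_i,p,f_j\rangle_\Pi,p)\leqslant\varepsilon$ for all $i\neq j$, then $d_\Pi(f_i,p)\leqslant F(\varepsilon,n)$ for some $i$}, where $d_\Pi$ is the wall metric and $F$ depends only on $\varepsilon$ and $n$. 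When $\varepsilon=0$ this is immediate: the sets $W_i$ of walls separating $p$ from $f_i$ are then pairwise disjoint subsets of the wall set of $[p,q]$, and $S\mapsto\bigcup_{i\in S}W_i$ embeds a median $(n+1)$-cube into $[p,q]$ unless some $W_i=\varnothing$, contradicting $\dim\Pi\leqslant n$. Carrying the conclusion back along $\lambda'$ yields the constant $C=C(\lambda)$ required by~(2).

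For $(2)\Rightarrow(1)$ I would argue the contrapositive: if $\rank X>n$, then some finite subset of $X$ admits only median-algebra approximations of dimension $\geqslant n+1$, and inside such a $\Pi$ a family of $n+1$ pairwise-crossing walls spans an embedded cube $I^{n+1}\hookrightarrow\Pi$, which $\lambda'$ carries to an $L$-quasi-morphism $\sigma\colon I^{n+1}\to X$ for a uniform $L$. Its images $\sigma(\bar e_i)$ cannot all be uniformly close to $\sigma(\bar\0)$, for otherwise the dictionary above together with a dimension reduction inside $\Pi$ would produce a lower-dimensional approximation of the same subset, contrary to the choice; hence condition~(3), equivalently~(2), fails.

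The step I expect to be the main obstacle is the metric refinement $\varepsilon>0$ of the median-algebra lemma in $(1)\Rightarrow(2)$. Once the wall sets $W_i$ are merely \emph{almost} disjoint ($|W_i\cap W_j|\leqslant\varepsilon$) they need no longer be the wall sets of points of $[p,q]$, so the clean cube-embedding argument collapses and one has to extract an $(n+1)$-cube more robustly --- for instance by peeling off the small pairwise overlaps and arguing with inseparability of a well-chosen transversal of walls --- at the price of a worse, but still dimension-only, bound $F(\varepsilon,n)$. A related nuisance, pervading both implications that involve~(1), is that the approximation map $\lambda'$ need not be coarsely injective, so distances in $X$ cannot be transferred naively into $\Pi$; handling this is what forces the repeated use of the projections $\langle a,\cdot,b\rangle$ and of the relation $\lambda'\pi\thicksim\id_A$.
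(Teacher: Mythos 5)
You should first note that the paper itself does not prove this proposition: it is quoted from \cite{niblo2017four} (Theorem 4.11 there), with only the remark that the proof survives the weaker axiom (C1). Measured against that standard argument, your overall architecture -- the dictionary $(2)\Leftrightarrow(3)$ via iterated coarse medians, and the link to $(1)$ through axiom (B2) plus the exact median-algebra fact that $n+1$ pairwise independent points of an interval in a rank $\leqslant n$ median algebra cannot all differ from the basepoint -- is the right one, and your $(2)\Leftrightarrow(3)$ outline is fine modulo the routine estimates of Lemma \ref{coarse iterated estimate}. But your $(1)\Rightarrow(2)$ has a genuine gap, and it is not the one you flag. You convert the hypothesis $d_X(\langle e_i,a,e_j\rangle,a)\leqslant\lambda$ into a wall-metric bound $d_\Pi(\langle f_i,p,f_j\rangle_\Pi,p)\leqslant\varepsilon(\lambda)$ in the approximating algebra $\Pi$, and at the end carry the bound $d_\Pi(f_i,p)\leqslant F(\varepsilon,n)$ back to $d_X(e_i,a)\leqslant C$. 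Neither transfer exists: (B2) imposes no comparison between $d_\Pi$ and $d_X$. The map $\lambda'$ is only required to approximately preserve medians and to approximately invert $\pi$ on the finite set $A$; it can stretch a single wall of $\Pi$ across an arbitrarily large distance of $X$ (already $X=\mathbb R$, $A=\{0,N\}$, $\Pi$ one edge), and it can collapse a subcomplex of $\Pi$ of huge wall-diameter into a small ball of $X$. So there is no $\varepsilon(\lambda)$, and the ``$\varepsilon>0$ refinement'' you single out as the main obstacle is a red herring: the argument never legitimately reaches the median-algebra lemma except with $\varepsilon=0$. The known repair keeps every metric estimate in $X$ and uses $\Pi$ only for exact identities: with $p=\pi(a)$, $q=\pi(b)$, $u_i=\langle p,\pi(e_i),q\rangle_\Pi$ and $s_i$ the iterated median in $\Pi$ of the $u_j$ ($j\neq i$) towards $q$, one needs only the exact non-equalities $u_i\neq\langle u_i,p,s_i\rangle_\Pi$; these give, for each $i$, a wall separating $p$ from $u_i$ but not from $s_i$, and such walls pairwise cross, contradicting $\rank\Pi\leqslant n$. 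The non-equalities are certified by comparing $\lambda'$-images in $X$: $\lambda'(u_i)$ is uniformly close to $e_i$, while $\lambda'(\langle u_i,p,s_i\rangle_\Pi)$ is uniformly close to $a$ because the pairwise hypothesis forces $\langle e_i,a,\langle e_j\,(j\neq i);b\rangle\rangle$ close to $a$ (induction with the five-point estimate and Lemma \ref{coarse iterated estimate}); so if every $e_i$ were far from $a$ one gets the forbidden crossing walls.

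The second gap is $(2)\Rightarrow(1)$, which you assert rather than prove. The contrapositive as you state it misreads the quantifiers in the definition of rank: $\rank X>n$ does not produce a finite subset admitting only approximations of rank $\geqslant n+1$ -- every finite subset admits approximations of all ranks -- what fails is the existence of a single function $h$ together with rank $\leqslant n$ approximations for \emph{all} finite subsets. More importantly, the step you compress into ``a dimension reduction inside $\Pi$ would produce a lower-dimensional approximation'' is precisely the content of this direction: one must show that the thin-cube condition (3) allows an arbitrary (B2)-approximation to be modified (collapsing hyperplanes of $\Pi$ dual to edges with short image in $X$) into a rank $\leqslant n$ approximation whose constants depend only on $h$ and the parameters, checking that the quasi-morphism property and $\lambda'\pi\thicksim\mathrm{id}_A$ survive the collapse. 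No argument is offered for this, and it is where most of the work in \cite{niblo2017four} (following Bowditch) lies. As it stands, your proposal establishes the dictionary $(2)\Leftrightarrow(3)$ but neither implication connecting these conditions to $(1)$.
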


We note that while part (3) of this theorem is slightly different from that stated in \cite[Theorem 4.11]{niblo2017four}, the given proof establishes this version too.

We also need the following notion of coarse median isomorphisms when we characterise rank via interval growths in Section \ref{growth section}.
\begin{defn}\label{cms isom}
Let $(X,d_X)$, $(Y,d_Y)$ be metric spaces and $\mu_X$, $\mu_Y$ be coarse medians on them, respectively. A map $f:X \to Y$ is called a $(\rho,C)$-\emph{coarse median isomorphism} for some proper function $\rho: \mathbb{R}^+ \to \mathbb{R}^+$ and constant $C>0$, if $f$ is a $(\rho,C)$-coarse equivalence as well as a $C$-quasi-morphism.
\end{defn}

There is a nice categoric explanation of this terminology given in Appendix \ref{The coarse median (space) category}. We will show in Remark \ref{dep of para for coarse median iso} that for a $(\rho_+,C)$-coarse median isomorphism $f$, any $(\rho_+,C)$-coarse inverse $g$ is a $C'$-quasi-morphism with the constant $C'$ depending only on $\rho_+,C$ and parameters of $X,Y$. In this case, we will also refer to $g$ as an \emph{inverse} of $f$.

\subsection{Iterated coarse medians}
We recall the following definition from \cite{vspakula2017coarse}:
\begin{defn}\label{coarseiteratedmediandefn}
Let $(X,d,\mu)$ be a coarse median space and $b\in X$. For $x_1\in X$, define:
$$\mu(x_1;b):=x_1.$$
For $k \geqslant 1$ and $x_1,\ldots,x_{k+1} \in X$, define the \emph{coarse iterated median}
$$\mu(x_1,\ldots,x_{k+1};b):=\mu(\mu(x_1,\ldots,x_k;b),x_{k+1},b).$$
Note that this definition ``agrees'' with the original coarse median operator $\mu$ in the sense that for any $a,b,c$ in $X$, we have $\mu(a,b,c)=\mu(a,b;c)$.
\end{defn}

It was established in \cite[Section 5]{vspakula2017coarse} that in a median algebra, the iterated median $m:=\mu(x_1,\ldots,x_{k+1};b)$ is characterised by the fact that the interval $[m, b]$ is the intersection of the intervals $[x_i,b]$ for $i=1,\ldots, k+1$.

Fixing a point $b$ and (as in Section \ref{medalg}) writing the coarse median $\mu(x_1,x_2,b)$ as $x_1\mathop*_b x_2$, the iterated median $\mu(x_1,\ldots,  x_k;b)$ can be written as $( (x_1\mathop*_b x_2)\mathop*_bx_3)\mathop*_b\ldots \mathop*_bx_k$. In this notation the four point condition is, precisely the statement that $(x_1\mathop*_b x_2)\mathop*_bx_3$ is uniformly close to the product $x_1\mathop*_b( x_2\mathop*_bx_3)$. This along with the commutativity of the operation $\mathop*_b$ allows the rearrangment of iterated medians. See Lemma \ref{coarse iterated estimate new1} below.

\bigskip

In \cite[Lemmas 2.16--2.19]{niblo2017four} we established the following estimates:

\begin{lem}\label{coarse iterated estimate}
Let $(X,d,\mu)$ be a coarse median space with parameters $(\rho, \kappaiv, \kappav)$. Then there exist non-decreasing functions $\rho_n, H_n:\Rp\to \Rp$ and constants $C_n, D_n$ depending only on $\rho, \kappaiv, \kappav$ such that for any $a, a_0,a_1,\ldots,a_n$ and $b, b_0,b_1,\ldots,b_n \in X$ we have:
\begin{enumerate}
\item $d(\mu(a_1,\ldots,a_n;a_0),\mu(b_1,\ldots,b_n;b_0)) \leqslant \rho_n (\sum_{k=0}^n d(a_k,b_k))$.
\item Let $(\Pi,\mu_\Pi)$ be a median algebra and $\sigma \colon \Pi \rightarrow X$ an $L$-quasi-morphism. For any $x_1,\ldots,x_n,b\in\Pi$, we have
$$\sigma(\langle x_1,\ldots,x_n;b\rangle_\Pi)\thicksim_{H_n(L)}\langle\sigma(x_1),\ldots,\sigma(x_n);\sigma(b)\rangle.$$
\item $\mu(a,b,\mu(a_1,\ldots,a_{n-1};a_n))\thicksim_{C_n}\mu(\mu(a,b,a_1),\ldots,\mu(a,b,a_{n-1});a_n)$.
\item $\mu(a,b,\mu(a_1,\ldots,a_{n-1};a_n))\thicksim_{D_n}\mu(\mu(a,b,a_1),\ldots,\mu(a,b,a_{n-1});\mu(a,b,a_n))$.
\end{enumerate}
\end{lem}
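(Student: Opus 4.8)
\emph{Proof proposal.}
The plan is to prove all four estimates by induction on $n$, in each case peeling the last factor off the iterated (coarse) median and bootstrapping from the symmetry and localisation axioms together with the uniform bornology estimate (\ref{C1 est}) and the five-point estimate (\ref{five point estimate}). At every inductive step the new constant will be an explicit function of the previous one and of the parameters $\rho,\kappaiv,\kappav$, which is exactly the dependence claimed.

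Parts (1) and (2) are routine inductions. For (1), the case $n=1$ holds with $\rho_1=\id$ since $\mu(a_1;a_0)=a_1$; for the step, write $\mu(a_1,\ldots,a_n;a_0)=\mu(\mu(a_1,\ldots,a_{n-1};a_0),a_n,a_0)$, apply (\ref{C1 est}) and then the inductive hypothesis (taking $\rho$, hence each $\rho_{n-1}$, non-decreasing), and set $\rho_n(t)=\rho(\rho_{n-1}(t)+t)$. Part (2) follows the same pattern: $H_1(L)=0$, and in the step one first moves $\sigma$ past the outer $\mu$ in $\sigma(\langle\langle x_1,\ldots,x_{n-1};b\rangle_\Pi,x_n,b\rangle_\Pi)$ at cost $L$ using the quasi-morphism property, then applies (\ref{C1 est}) with the inductive hypothesis to replace $\sigma(\langle x_1,\ldots,x_{n-1};b\rangle_\Pi)$ by $\langle\sigma(x_1),\ldots,\sigma(x_{n-1});\sigma(b)\rangle$, giving $H_n(L)=L+\rho(H_{n-1}(L))$. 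For (3) we induct on $n\geq 2$: the case $n=2$ is an exact identity and $n=3$ is precisely the five-point estimate (\ref{five point estimate}), so $C_3=\kappav$; for the step, write the interior iterated median as $\mu(\mu(a_1,\ldots,a_{n-2};a_n),a_{n-1},a_n)$, apply (\ref{five point estimate}) to this nested median with outer pair $a,b$, and use the inductive hypothesis with (\ref{C1 est}) to rewrite the first entry of the result; recognising the outcome as $\mu(\mu(a,b,a_1),\ldots,\mu(a,b,a_{n-1});a_n)$ gives the recursion $C_n=\kappav+\rho(C_{n-1})$.

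Part (4) is the substantive one, and I would isolate its case $n=3$: writing $p_i=\mu(a,b,c_i)$, it asserts $\mu(a,b,\mu(c_1,c_2,c_3))\thicksim_{D_3}\mu(p_1,p_2,p_3)$, i.e.\ that the map sending $x$ to $\mu(a,b,x)$ is a coarse quasi-morphism. Granting this, the general case of (4) follows by the same induction as in part (2), with this map in the role of $\sigma$, and yields $D_n=D_3+\rho(D_{n-1})$. To prove the case $n=3$, note that by (\ref{five point estimate}) we have $\mu(a,b,\mu(c_1,c_2,c_3))\thicksim_{\kappav}\mu(p_1,p_2,c_3)$, so it is enough to bound $d(\mu(p_1,p_2,c_3),\mu(p_1,p_2,p_3))$ by a function of $\rho,\kappav$. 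The key facts are that $\mu(a,b,\cdot)$ is coarsely idempotent --- $\mu(a,b,p_i)\thicksim_{\kappav}p_i$, since by (\ref{five point estimate}) and (M1), (M2) we have $\mu(a,b,\mu(a,b,c_i))\thicksim_{\kappav}\mu(\mu(a,b,a),\mu(a,b,b),c_i)=\mu(a,b,c_i)$ --- and that $\mu(a,b,\mu(p_1,p_2,c_3))$ can be evaluated in two ways via (\ref{five point estimate}) after reordering by (M2): splitting $c_3$ off gives a point within a constant depending only on $\rho,\kappav$ of $\mu(p_1,p_2,p_3)$ (by coarse idempotence and (\ref{C1 est})), while splitting $p_1,p_2$ off gives a point within such a constant of $\mu(p_1,p_2,c_3)$; the triangle inequality then bounds $d(\mu(p_1,p_2,c_3),\mu(p_1,p_2,p_3))$ and fixes $D_3$.

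I expect the $n=3$ case of part (4) to be the main obstacle. Transforming $c_3$ directly into $\mu(a,b,c_3)$ one argument at a time does not work, because $c_3$ need not be close to $\mu(a,b,c_3)$ and $\mu(p_1,p_2,c_3)$ need not be close to $a$, $b$, $p_1$ or $p_2$; the two-way evaluation of $\mu(a,b,\mu(p_1,p_2,c_3))$ above gets around this because $\mu(a,b,\cdot)$ applied to a median coarsely fixes whichever of its arguments already lie in $[a,b]$. The remaining bookkeeping is just the observation that each of the recursions $\rho_n(t)=\rho(\rho_{n-1}(t)+t)$, $H_n(L)=L+\rho(H_{n-1}(L))$, $C_n=\kappav+\rho(C_{n-1})$ and $D_n=D_3+\rho(D_{n-1})$ exhibits the new constant in terms of $\rho,\kappaiv,\kappav$ and $n$ alone.
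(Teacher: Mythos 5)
Your proposal is correct: parts (1)--(3) are the straightforward inductions using estimate (\ref{C1 est}) and the five-point estimate (\ref{five point estimate}), and your treatment of the crucial $n=3$ case of (4) is sound --- the coarse idempotence $\mu(a,b,\mu(a,b,c_i))\thicksim_{\kappav}\mu(a,b,c_i)$ together with the two evaluations of $\mu(a,b,\mu(p_1,p_2,c_3))$ (one ordering landing near $\mu(p_1,p_2,c_3)$, the other near $\mu(p_1,p_2,p_3)$) gives $D_3\leqslant 3\kappav+\rho(\kappav)+\rho(2\kappav)$, after which the induction $D_n=D_3+\rho(D_{n-1})$ goes through since the defining recursion for iterated medians holds exactly. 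Note that this paper does not prove the lemma itself but quotes it from \cite{niblo2017four}; your argument, resting on induction plus the quasi-morphism property of the gate map $x\mapsto\mu(a,b,x)$, is essentially the standard route taken there, so there is no substantive divergence to report.
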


Here we provide additional estimates that will give us the control we need later to analyse the structure of coarse cubes in Section \ref{growth}.

\begin{lem}\label{coarse iterated estimate new1}
Let $(X,d,\mu)$ be a coarse median space with parameters $(\rho, \kappaiv, \kappav)$. Then for any $n\in \mathbb N$, there exists a constant $G_n$ depending only on $\rho, \kappaiv, \kappav$ such that for any $a_1,\ldots,a_n,b\in X$ and any permutation $\sigma\in S_n$, we have
$$\langle a_{\sigma(1)},\ldots,a_{\sigma(n)};b\rangle \thicksim_{G_n} \mu(a_1,\ldots,a_n;b).$$
\end{lem}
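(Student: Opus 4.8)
The statement is a permutation-invariance estimate for iterated coarse medians, so the natural strategy is to reduce an arbitrary permutation to a composition of adjacent transpositions, and then to establish the estimate for a single adjacent transposition by uniform bounds. I would proceed by induction on $n$, using the two-level structure of the iterated median: $\langle a_1,\ldots,a_n;b\rangle = \mu(\langle a_1,\ldots,a_{n-1};b\rangle, a_n, b)$. The base cases $n=1,2$ are trivial (the $n=2$ case is just (C0)/coarse symmetry of $\mu$ itself). For the inductive step I would exploit that the symmetric group $S_n$ is generated by the transpositions $s_i=(i,i{+}1)$ for $i=1,\ldots,n-1$, and that the word length of any permutation in these generators is bounded by $\binom{n}{2}$, a constant depending only on $n$. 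Combined with Lemma \ref{coarse iterated estimate}(1), which says that the map $(a_1,\ldots,a_n)\mapsto\langle a_1,\ldots,a_n;b\rangle$ is bornologous with control $\rho_n$ depending only on the parameters, a bound for a single adjacent transposition propagates (with a worse, but still parameter-only-dependent, constant) to a bound for an arbitrary permutation: if $\|\sigma\cdot\bar a - \bar a\|$-type quantities are controlled one swap at a time, then iterating at most $\binom n2$ times and applying $\rho_n$ each time gives $G_n$.

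So the crux is the single adjacent transposition. There are two cases. Swapping $a_i$ and $a_{i+1}$ for $i\le n-2$: here both $a_i$ and $a_{i+1}$ appear inside the inner iterated median $\langle a_1,\ldots,a_{n-1};b\rangle$, so this is handled directly by the inductive hypothesis for $n-1$ followed by one application of (C1)/(\ref{C1 est}) to pass through the outermost $\mu(\,\cdot\,,a_n,b)$. The genuinely new case is the transposition $s_{n-1}=(n{-}1,n)$, swapping the last two arguments: we must compare
$$\mu\bigl(\langle a_1,\ldots,a_{n-2},a_{n-1};b\rangle, a_n, b\bigr)\quad\text{with}\quad \mu\bigl(\langle a_1,\ldots,a_{n-2},a_n;b\rangle, a_{n-1}, b\bigr).$$
Writing $z=\langle a_1,\ldots,a_{n-2};b\rangle$, the left side is $\mu(\mu(z,a_{n-1},b),a_n,b)$ and the right side is $\mu(\mu(z,a_n,b),a_{n-1},b)$; these are uniformly close by the coarse $4$-point condition (C2) — indeed this is precisely axiom (C2) applied with $(a,b,c,d)=(z,b,a_{n-1},a_n)$, up to reordering arguments within each $\mu$ using (C0). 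This gives closeness with constant $\kappaiv+O(\kappao)$, depending only on the parameters.

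Once the $s_{n-1}$ case is in hand, the general adjacent transposition is covered by the two cases above, and then the word-length-$\le\binom n2$ argument together with $\rho_n$ yields the constant $G_n$ depending only on $\rho,\kappaiv,\kappavv$ and $n$. The main obstacle I anticipate is purely bookkeeping: tracking how the constant degrades under the $\binom n2$-fold composition, and making sure the reordering-within-$\mu$ steps (needed to cast things in the exact shape of (C2)) only cost multiples of $\kappao$. A cleaner alternative that avoids some of this bookkeeping would be to first prove, by induction, that $\langle a_1,\ldots,a_n;b\rangle$ is uniformly close to $\mu(a_i,\langle a_1,\ldots,\widehat{a_i},\ldots,a_n;b\rangle\!\setminus\!, b)$ for every $i$ — i.e. that any single coordinate can be "pulled to the outside" at uniform cost — since from that fact permutation-invariance is immediate; but the pull-to-the-outside statement is proved by the same (C2)-plus-induction mechanism, so it is essentially a repackaging rather than a shortcut.
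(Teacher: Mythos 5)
Your proposal is correct and follows essentially the same route as the paper: induction on $n$, reduction to transpositions generating $S_n$, the key swap involving the last slot handled by the coarse $4$-point condition (C2) applied with $b$ as the central entry, and the remaining swaps absorbed into the inductive hypothesis via (C1); the paper merely uses the transpositions $(1j)$ instead of adjacent ones, which changes nothing of substance. (Two small simplifications: in a coarse median space (M2) holds exactly, so no $\kappa_0$ cost is incurred when reordering inside each $\mu$, and the chaining over at most $\binom{n}{2}$ swaps needs only the triangle inequality, not repeated applications of $\rho_n$.)
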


\begin{proof}
We proceed by induction on $n$. When $n=1$ or $2$, we may take $G_1=G_2=0$ by definition and axiom (M2).

Now assume that the result holds for $1,2,\dots,n-1$ and we consider the case of $n$. As usual it is sufficient to prove the lemma when $\sigma$ is a transposition of the form $(1j)$. If $j<n$ then by definition, we have
$$\mu( a_1,\ldots, a_n; b )=\langle \langle a_1,\ldots,a_j;b\rangle,a_{j+1},\ldots,a_n;b \rangle.$$
Inductively $\langle a_1,\ldots,a_j;b\rangle \thicksim_{G_j} \langle a_j,a_2,\ldots,a_{j-1},a_1;b\rangle$ and the result follows by Lemma \ref{coarse iterated estimate} (1). It remains to check the case $\sigma=(1n)$. By the inductive step, we have
\begin{eqnarray*}
&&\mu(a_n,a_2,\ldots, a_{n-1},a_1;b)=\mu(\mu(a_n,a_2,\ldots,a_{n-1};b), a_1, b )\\
&\thicksim_{\rho(G_{n-1})}& \mu(\mu(a_2,\ldots,a_{n-1},a_n;b), a_1, b )= \mu(  \mu(\mu(a_2,\ldots,a_{n-1};b), a_n , b), a_1, b )\\
&\thicksim_{\kappaiv}&\mu( \mu(\mu(a_2,\ldots,a_{n-1};b),a_1,b),a_n,b )=\mu( \mu(a_2,\ldots,a_{n-1},a_1;b),a_n,b )\\
&\thicksim_{\rho(G_{n-1})}& \mu( \mu(a_1,a_2,\ldots,a_{n-1};b),a_n,b )= \mu(a_1,a_2,\ldots,a_n;b).
\end{eqnarray*}
Hence for the transposition $(1n)$, we have
$$\mu(a_n,a_2,\ldots, a_{n-1},a_1;b) \thicksim_{2\rho(G_{n-1})+\kappaiv}\mu(a_1,a_2,\ldots,a_n;b).$$
This completes the proof.
\end{proof}

\begin{lem}\label{coarse iterated estimate new2}
Let $(X,d,\mu)$ be a coarse median space with parameters $(\rho, \kappaiv, \kappav)$. Then for any $n$, there exists a constant $E_n$ depending only on $\rho, \kappaiv, \kappav$ such that for any $1\leqslant k\leqslant n$ and $a_1,\ldots,a_n,b \in X$, we have
\begin{equation}\label{EQ3}
\mu(a_1,\ldots,a_k;\mu(a_1,\ldots,a_n;b))\thicksim_{E_n}\mu(a_1,\ldots,a_k;b).
\end{equation}
\end{lem}

\begin{proof}
Fix an $n$. By Axiom (B2), there exists a constant $h_{n+1}>0$ such that for any $a_1,\ldots,a_n,b \in X$ there exist a finite median algebra $(\Pi,\mu_\Pi)$, points $\bar{a}_1, \ldots, \bar{a}_n, \bar{b} \in \Pi$ and an $h_{n+1}$-quasi-morphism $\lambda: \Pi \rightarrow X$ satisfying $\lambda(\bar{a}_i) \thicksim_{h_{n+1}} a_i$ for $i=1,\ldots, n$ and $\lambda(\bar{b}) \thicksim_{h_{n+1}} b$. From parts (1) and (2) of Lemma \ref{coarse iterated estimate} with the control functions $H_n$ and $\rho_n$ therein, we have
\[
\mu(a_1,\ldots,a_n;b)\thicksim_{\rho_n((n+1)h_{n+1})} \langle\lambda(\bar{a}_1),\ldots,\lambda(\bar{a}_n);\lambda(\bar{b})\rangle \thicksim_{H_n(h_{n+1})} \lambda(\langle\bar{a}_1,\ldots,\bar{a}_n;\bar{b}\rangle_{\Pi}).
\]
Similarly for any $1\leqslant k\leqslant n$, we have
\[
\mu(a_1,\ldots,a_k;b)\thicksim_{\rho_n((n+1)h_{n+1}) + H_n(h_{n+1})} \lambda(\langle\bar{a}_1,\ldots,\bar{a}_k;\bar{b}\rangle_\Pi)
\]
and
\begin{align*}
\mu(a_1,\ldots,a_k;\mu(a_1,\ldots,a_n;b)) &\thicksim_{\rho_n(nh_{n+1}+\rho_n((n+1)h_{n+1}) + H_n(h_{n+1}))} \langle\lambda(\bar{a}_1),\ldots,\lambda(\bar{a}_k); \lambda(\langle\bar{a}_1,\ldots,\bar{a}_n;\bar{b}\rangle_\Pi)\rangle \\
& \thicksim_{H_n(h_{n+1})} \lambda(\langle \bar{a}_1,\ldots,\bar{a}_k;\langle\bar{a}_1,\ldots,\bar{a}_n;\bar{b}\rangle_\Pi \rangle_\Pi).
\end{align*}
It follows directly from \cite[Lemma 5.3]{vspakula2017coarse} that in the actual median algebra $(\Pi,\mu_\Pi)$, the iterated median $\langle\bar{a}_1,\ldots,\bar{a}_n;\bar{b}\rangle_\Pi$ is nothing but the projection of $\bar{b}$ onto the convex hull of $\bar{a}_1,\ldots,\bar{a}_n$. Hence we have
\[
\langle\bar{a}_1,\ldots,\bar{a}_k;\bar{b}\rangle_\Pi = \langle \bar{a}_1,\ldots,\bar{a}_k;\langle\bar{a}_1,\ldots,\bar{a}_n;\bar{b}\rangle_\Pi \rangle_\Pi.
\]
Combining the above together and taking $E_n:=\rho_n(nh_{n+1}+\rho_n((n+1)h_{n+1}) + H_n(h_{n+1})) + \rho_n((n+1)h_{n+1}) + 2H_n(h_{n+1})$, Equality (\ref{EQ3}) holds.
\end{proof}

\section{Coarse interval structures}\label{coarseintervalstructures}
Sholander studied the relation between intervals and median operators, and we will generalise this approach to the coarse context.

Classically Sholander defined the interval between two points $a$ and $b$ in a median algebra $(X,\mu)$ to be the set $\{c:\mu(a,c,b)=c\}$. This (in the context of median algebras) agrees with our definition of interval (Definition \ref{interval}) since for any $c=\mu(a,x,b)\in [a,b]$, we have
\[
\mu(a,c,b)=\mu(c,a,b)=\mu(\mu(x,a,b),a,b)= \mu(x,\mu(a,b,a),b)=\mu(x,a,b)=c.
\]
Of course the two definitions of interval do not necessarily coincide in the coarse context.

\begin{thm}[Sholander, \cite{sholander1954medians}]\label{sholander}
For every median algebra $(X,\mu)$, the binary operation $[\cdot ,\cdot ]: X\times X\rightarrow \mathcal P(X)$ defined by $(a,b)\mapsto [a,b]$ has the following properties:
\begin{itemize}
\item $[a,a] =\{a\}$,
\item if $c\in [a,b]$ then $[a,c]\subseteq [b,a]$,
\item $[a,b]\cap [b,c]\cap [c,a]$ has cardinality $1$.
\end{itemize}
Conversely, every operation $X^2 \rightarrow \mathcal P(X)$ with the preceding properties induces a ternary operator $\mu'$ whereby $\mu(a,b,c)'$ is the unique point in $[a,b]\cap [b,c]\cap [c,a]$ such that $(X,\mu')$ is a median algebra.
\end{thm}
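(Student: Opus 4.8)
The plan is to prove the two directions separately. For the forward direction, I would take a median algebra $(X,\mu)$, define $[a,b] = \{c : \mu(a,c,b)=c\}$ (equivalently $\{\mu(a,x,b) : x \in X\}$, using the computation already displayed in the excerpt), and verify the three properties. The identity $[a,a]=\{a\}$ is immediate from (M1): $\mu(a,c,a)=c$ forces $c=\mu(a,a,c)$... more directly, $c \in [a,a]$ means $c = \mu(a,c,a)$, and by (M2) and (M1), $\mu(a,c,a)=\mu(a,a,c)=a$, so $c = a$. For the inclusion property, suppose $c \in [a,b]$, i.e. $\mu(a,c,b)=c$, and take $e \in [a,c]$, i.e. $\mu(a,e,c)=e$; I must show $e \in [b,a]=[a,b]$, i.e. $\mu(a,e,b)=e$. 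I would substitute $c = \mu(a,c,b)$ into $\mu(a,e,c)=e$ and push the median around using (M2) and the 4-point/5-point condition (M3): $e = \mu(a,e,\mu(a,c,b)) = \mu(\mu(a,e,a),\mu(a,e,c),b) = \mu(a,e,b)$, using (M3) in the form from Definition \ref{median algebra defn} together with $\mu(a,e,a)=a$ and $\mu(a,e,c)=e$. For the third property, existence: the point $m = \mu(a,b,c)$ lies in all three intervals by the same displayed computation (it equals $\mu(a, m, b)$, etc., after permuting arguments); uniqueness: if $p$ lies in $[a,b]\cap[b,c]\cap[c,a]$ then $p = \mu(a,p,b) = \mu(b,p,c) = \mu(c,p,a)$, and I would feed these three equations into (M3) to collapse $p$ to $\mu(a,b,c)$ — e.g. compute $\mu(a,b,c)$ and repeatedly replace one vertex by the median expression equal to $p$.

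For the converse, I would start from an operation $[\ ,\ ]: X^2 \to \mathcal P(X)$ satisfying the three axioms, define $\mu'(a,b,c)$ to be the unique point of $[a,b]\cap[b,c]\cap[c,a]$ (well-defined by the cardinality-one axiom), and check (M1), (M2), (M3). Symmetry (M2) is built in since the intersection $[a,b]\cap[b,c]\cap[c,a]$ is manifestly symmetric in $a,b,c$ (the inclusion axiom shows $[a,b]$ and $[b,a]$ agree, or at least that the relevant triple intersection is symmetric — I would first record $[a,b]=[b,a]$ as a consequence of the axioms). For (M1), $\mu'(a,a,b)$ is the unique point of $[a,a]\cap[a,b]\cap[b,a] = \{a\}\cap[a,b] = \{a\}$, so $\mu'(a,a,b)=a$. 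The serious work is (M3): I would need to derive from the three interval axioms the transitivity/monotonicity facts about intervals — notably that $c \in [a,b]$ iff $[a,c]\cap[c,b]=\{c\}$, that $[a,b]$ is "convex," and that $d \in [a,b]$ and $e \in [a,d]$ imply $e \in [a,b]$ — and then chase the definition of $\mu'$ through the five-point expression.

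I expect the converse, and specifically verifying (M3), to be the main obstacle: unwinding the 5-point condition requires showing that $\mu'(a,b,\mu'(c,d,e))$ and $\mu'(\mu'(a,b,c),\mu'(a,b,d),e)$ both coincide with the unique common point of a suitable collection of intervals, which in turn demands several lemmas about how intervals interact (transitivity of "betweenness," the fact that $[a,b]$ with the induced betweenness is itself a median algebra, etc.). A cleaner route I would consider is to avoid proving (M3) directly and instead show that $(X, [\ ,\ ])$ is a "median interval space" in a form already known to be equivalent to a median algebra, or to verify the equivalent 4-point condition \eqref{four point} rather than the 5-point condition, since the 4-point form is a single associativity-type identity and may be more tractable to check from the interval axioms. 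In either case the technical heart is a handful of betweenness lemmas; given the length, I would cite Sholander \cite{sholander1954medians} for the routine parts and only highlight the key identities, since the excerpt states this theorem as background rather than as a new contribution.
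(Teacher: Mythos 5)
The paper does not prove Theorem \ref{sholander} at all: it is imported verbatim from Sholander \cite{sholander1954medians} as background, so there is no internal argument to compare yours against. Measured on its own terms, your forward direction is essentially a complete and correct proof: the computation $[a,a]=\{a\}$ is right, your application of (M3) to get $e=\mu(a,e,\mu(a,c,b))=\mu(\mu(a,e,a),\mu(a,e,c),b)=\mu(a,e,b)$ establishes the second bullet, and the uniqueness of the point in $[a,b]\cap[b,c]\cap[c,a]$ does collapse to $\mu(a,b,c)$ by exactly the kind of repeated (M3) substitutions you indicate (two rounds suffice: first show $p=\mu(a,m,p)=\mu(b,m,p)$ for $m=\mu(a,b,c)$, then combine). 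Your preliminary observations for the converse are also sound: $[a,b]=[b,a]$ and $c\in[a,c]$ do follow from the three axioms (the latter from applying the cardinality-one axiom to the triple $a,c,c$), and (M1), (M2) then come for free.

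The one genuine shortfall is the converse's verification of (M3) (or the equivalent $4$-point identity (\ref{four point})), which you correctly identify as the technical heart but do not carry out: the betweenness lemmas you list (transitivity of betweenness, $c\in[a,b]$ iff $[a,c]\cap[c,b]=\{c\}$, convexity of intervals) are precisely where the several pages of work in Sholander's paper live, and naming them is not the same as deriving them from the three axioms. So as a self-contained proof the converse is incomplete. That said, your proposed fallback --- citing \cite{sholander1954medians} for this direction --- is exactly what the paper itself does for the whole theorem, so your treatment is no weaker than the source text; if you want a self-contained account, the route via the $4$-point condition is the one I would pursue, but be prepared to prove the interval lemmas honestly rather than assuming them.
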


As remarked by the referee, it requires a little work to extract the proof of the converse statement from Sholander, however we are fortunate that this is explained in some detail in \cite[Section 2]{bowditch2016some}. Here we will provide a coarse analogue of Sholander's theorem. We start by considering the properties of intervals in a coarse median space.

\begin{prop}\label{coarse interval}
Let $(X,d,\mu)$ be a coarse median space with parameters $\rho,\kappaiv$ and $\kappav$. Then the map $[\cdot,\cdot]: X^2 \rightarrow \mathcal{P}(X)$ defined by $(a,b) \mapsto [a,b]=\{\mu(a,x,b)\mid x\in X\}$ satisfies:
\begin{itemize}
  \item[(I1).] For all $a,b\in X$, $[a,a]=\{a\}$, $[a,b]=[b,a]$;
  \item[(I2).] There exists a non-decreasing function $\f: \Rp \rightarrow \Rp$ such that for any $a,b\in X$ and $c\in \mathcal{N}_R([a,b])$, we have $[a,c] \subseteq \mathcal{N}_{\f(R)}([a,b])$;
  \item[(I3).] There exists a non-decreasing function $\g: \Rp \rightarrow \Rp$ such that for any $a,b,c\in X$, we have $[a,b] \cap [b,c] \cap [c,a] \neq \emptyset$ and
$$\diam( \mathcal{N}_R([a,b]) \cap \mathcal{N}_R([b,c]) \cap \mathcal{N}_R([c,a]) ) \leqslant \g(R).$$
\end{itemize}
\end{prop}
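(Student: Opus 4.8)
\emph{Proposal.} Throughout I assume $\rho$ has been enlarged so that it is non-decreasing and satisfies (\ref{C1 est}); all constants produced below will depend only on $\rho,\kappaiv,\kappav$. Property (I1) is immediate from the \emph{exact} axioms (M1), (M2): $\mu(a,x,a)=\mu(a,a,x)=a$ forces $[a,a]=\{a\}$, and $\mu(a,x,b)=\mu(b,x,a)$ gives $[a,b]=[b,a]$. For (I2), suppose $c\thicksim_{R}\mu(a,x_0,b)$ for some $x_0$ and take an arbitrary point $\mu(a,y,c)\in[a,c]$. By (\ref{C1 est}), $\mu(a,y,c)\thicksim_{\rho(R)}\mu(a,y,\mu(a,x_0,b))$; applying the coarse $5$-point estimate (\ref{five point estimate}) and (M1) ($\mu(a,y,a)=a$) rewrites $\mu(a,y,\mu(a,x_0,b))\thicksim_{\kappav}\mu(a,\mu(a,y,x_0),b)\in[a,b]$. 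Hence $[a,c]\subseteq\mathcal N_{\f(R)}([a,b])$ with $\f(R):=\rho(R)+\kappav$.

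For (I3), non-emptiness is clear: $\mu(a,b,c)$ equals $\mu(a,c,b)$, $\mu(b,a,c)$ and $\mu(c,b,a)$ by (M2), so it lies in $[a,b]\cap[b,c]\cap[c,a]$. The real content is the diameter bound, which I would obtain by showing that every point $p$ of the triple intersection of $R$-neighbourhoods is uniformly close to $m:=\mu(a,b,c)$. The key preliminary is a \emph{coarse retraction} statement: if $u\in\mathcal N_{R}([a,b])$ then $\mu(a,u,b)\thicksim_{\sigma(R)}u$ where $\sigma(R):=\rho(R)+R+\kappaiv$. Indeed, if $u\thicksim_{R}\mu(a,x,b)$ then $\mu(a,u,b)\thicksim_{\rho(R)}\mu(a,\mu(a,x,b),b)=\mu(\mu(x,a,b),a,b)$ by (M2), which by the coarse $4$-point condition (C2) is $\thicksim_{\kappaiv}\mu(x,a,\mu(b,a,b))=\mu(x,a,b)=\mu(a,x,b)\thicksim_{R}u$ using (M1). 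Applying this at $p$ gives $\mu(a,p,b),\mu(b,p,c),\mu(c,p,a)\thicksim_{\sigma(R)}p$, and then feeding these into the coarse $5$-point estimate (e.g. $\mu(a,p,\mu(b,c,a))\thicksim_{\kappav}\mu(\mu(a,p,b),\mu(a,p,c),a)\thicksim_{\rho(2\sigma(R))}\mu(p,p,a)=p$) shows $\mu(x,p,m)\thicksim_{\tau}p$ for each $x\in\{a,b,c\}$, with $\tau=\tau(R):=\kappav+\rho(2\sigma(R))$.

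The final step is an asymmetric recombination. Evaluate $\mu(\mu(a,p,m),b,m)$ in two ways. Substituting $\mu(a,p,m)\thicksim_{\tau}p$ via (\ref{C1 est}) and then $\mu(p,b,m)=\mu(b,p,m)\thicksim_{\tau}p$ gives $\mu(\mu(a,p,m),b,m)\thicksim_{\rho(\tau)+\tau}p$. On the other hand, by (M2) and the $5$-point estimate, $\mu(\mu(a,p,m),b,m)=\mu(b,m,\mu(a,p,m))\thicksim_{\kappav}\mu(\mu(b,m,a),\mu(b,m,p),m)$; here $\mu(b,m,a)=\mu(a,b,\mu(a,b,c))\thicksim_{\kappav}\mu(a,b,c)=m$ (another application of the $5$-point estimate and (M1)) and $\mu(b,m,p)\thicksim_{\tau}p$, so (M1) collapses this to $\thicksim_{\kappav+\rho(\kappav+\tau)}m$. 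Comparing, $p\thicksim_{\eta(R)}m$ with $\eta(R):=\rho(\tau)+\tau+\kappav+\rho(\kappav+\tau)$, whence any two points of the triple intersection are $2\eta(R)$-close; take $\g(R):=2\eta(R)$ (non-decreasing, since $\rho$ is).

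The main obstacle is precisely the diameter estimate in (I3). Every naive attempt to bound $d(p,\mu(a,b,c))$ runs in circles: each rewriting of a median expression built around $p$ simply reproduces $p$, because all three "between" conditions on $p$ are individually consistent with $p$ being anything on the respective intervals. What breaks the symmetry is the last computation above — evaluating $\mu(\mu(a,p,m),b,m)$, where the two outer slots occupied by $b$ and $m$ (rather than by $p$ or $a$) are exactly what allows the $5$-point estimate to extract $m$. Once this recombination is identified, everything else is routine axiom-chasing and bookkeeping of constants.
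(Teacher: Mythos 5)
Your proof is correct and follows essentially the same route as the paper: (I1) from the exact axioms, (I2) by a substitution plus a coarse median identity, and (I3) by first establishing the coarse retraction estimate $\mu(a,p,b)\thicksim_{\rho(R)+R+\kappaiv}p$ (the paper's $\kappa'$) and then collapsing $p$ onto $\mu(a,b,c)$ via the coarse $4$/$5$-point conditions. The only differences are cosmetic bookkeeping — you invoke the $5$-point estimate where the paper uses (C2), and your final ``evaluate $\mu(\mu(a,p,m),b,m)$ two ways'' recombination replaces the paper's single chain through $\mu(c,a,\mu(b,c,z))$ and $\mu(\mu(a,b,c),c,\mu(a,b,z))$ — yielding slightly different but equally valid parameters $\f,\g$.
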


\begin{proof}
Property (I1) follows directly from axioms (M1) and (M2) for a coarse median space. Now we consider (I2). Since $c\in\mathcal{N}_R([a,b])$, there exists $x\in X$ such that $c \thicksim_R \mu(a,b,x)$. Then it follows from axioms (C1), (C2) and (M2) that for any $y\in X$ we have
$$\langle a,c,y\rangle \thicksim_{\rho(R)}\langle a,\mu(a,b,x),y\rangle \thicksim_{\kappaiv} \langle a,b,\langle a,x,y\rangle\rangle,$$
which implies $\langle a,c,y\rangle \in \mathcal{N}_{\rho(R)+\kappaiv}([a,b])$. So we can take $\f(R)=\rho(R)+\kappaiv$, and (I2) holds. For (I3), we know that $\mu(a,b,c) \in [a,b] \cap [b,c] \cap [c,a]$ so the intersection is non-empty. Furthermore given a point $z\in \mathcal{N}_R([a,b]) \cap \mathcal{N}_R([b,c]) \cap \mathcal{N}_R([c,a])$ there exists $w\in X$ such that $z \thicksim_R \mu(a,b,w)$. So by (C1) and (C2), we have
$$\mu(a,b,z) \thicksim_{\rho(R)} \mu(a,b,\mu(a,b,w)) \thicksim_\kappaiv \mu(\mu(a,b,a),b,w)=\mu(a,b,w) \thicksim_R z.$$
Similarly for $b,c$ and for $c,a$. Hence we obtain that
$$\mu(a,b,z) \thicksim_{\kappa'} z,\quad \mu(b,c,z) \thicksim_{\kappa'} z,\quad \mu(c,a,z) \thicksim_{\kappa'} z,$$
where $\kappa':=\rho(R)+R+\kappaiv =\f(R)+R$. Combining with (C1) and (\ref{five point estimate}), we obtain
\begin{eqnarray*}
    z & \thicksim_{\kappa'} & \mu(c,a,z) \thicksim_{\rho(\kappa')} \mu(c,a,\mu(b,c,z)) \thicksim_{\kappaiv}  \mu(\mu(c,a,b),c,z)\\
      & = &  \mu(\mu(a,b,c),c,z) \thicksim_{\rho(\kappa')} \mu(\mu(a,b,c),c,\mu(a,b,z)) \thicksim_{\kappav} \mu(a,b,\mu(c,c,z))\\
      & = & \mu(a,b,c).
\end{eqnarray*}
The above estimate implies that the diameter of $\mathcal{N}_R([a,b]) \cap \mathcal{N}_R([b,c]) \cap \mathcal{N}_R([c,a])$ is bounded by
$$\g(R)=4\rho(\kappa')+2\kappa'+4\kappaiv=4\rho(\rho(R)+R+\kappaiv)+2\rho(R)+2R+6\kappaiv.$$
\end{proof}

With this in mind, we define the concept of a coarse interval space as follows.
\begin{defn}
Let $(X,d)$ be a metric space and $[\cdot,\cdot]: X^2 \rightarrow \mathcal{P}(X)$ be a map satisfying (I1)$\sim$(I3) in Proposition \ref{coarse interval}. Then $\I=(X,d,[\cdot,\cdot])$ is called a \emph{coarse interval space}. The functions $\f,\g$ in the conditions are called \emph{parameters} for $\I$. As with the notion of a coarse median space, the parameters are not uniquely defined and are not part of the data. It is only their existence that is required.
\end{defn}

Note that conditions (I1) and (I3) together imply that any interval $[a,b]$ must contain $a$, since the intersection $[a,a]\cap [a,b]\cap [b,a]$ is simultaneously non-empty and contained in $[a,a]:=\{a\}$. Since $[a,b]=[b,a]$ by (I1) as well, it must also contain $b$.

Given a coarse median space $(X,d,\mu)$,  the triple $(X,d,[\cdot, \cdot])$ given by $[a,b]:=\{\mu(a,x,b):x\in X\}$ is said to be the \emph{coarse interval space induced by $(X,d,\mu)$}.

On the other hand, suppose we are given a coarse interval space $(X,d,[\cdot,\cdot])$. Axiom (I3) implies that for any $a,b,c\in X$ we can always choose a point in $[a,b] \cap [b,c] \cap [c,a]$, denoted by $\mu(a,b,c)$, which is invariant under any permutation of $\{a,b,c\}$ (i.e., the choice satisfies axiom (M2)), while (I1) and (I3) together ensure that we can only choose $a$ for the triple $a,a,b$ ensuring that it also satisfies (M1). Making such a choice for all $a,b,c$ gives us a ternary operator $\mu$ on $X$ which we will refer to as the \emph{induced (ternary) operator}.
By axiom (I3), $\mu$ is uniquely determined up to bounded error.

Our proof that the induced ternary operator is a coarse median operator on $X$ is inspired by Sholander's argument in \cite{sholander1954medians}, though more care needs to be taken with the estimates introduced by the coarse conditions. For clarity we divide the proof into several lemmas.

\begin{lem}\label{C1}
Let $(X,d,[\cdot,\cdot])$ be a coarse interval space and $\mu$ be the induced operator. Given parameters $\f,\g$ for the space, then for any $a,a',b,c\in X$, we have
$$d(\mu(a,b,c), \mu(a',b,c)) \leqslant \g(\f(d(a,a'))).$$
In particular, axiom (C1) holds for $(X,d,\mu)$ with $\rho=\g\circ \f$.
\end{lem}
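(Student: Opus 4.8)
The goal is to show that if $\mu$ is the ternary operator induced from a coarse interval space $(X,d,\I)$ (chosen so that $\mu(a,b,c) \in [a,b]\cap[b,c]\cap[c,a]$ symmetrically), then moving one of the three arguments by $d(a,a')$ moves the output by at most $\g(\f(d(a,a')))$. The natural strategy is to find a \emph{single} point that is close to \emph{both} $\mu(a,b,c)$ and $\mu(a',b,c)$ — or, better, to locate both of these medians inside a common region whose diameter is controlled by (I3). The key observation is that $\mu(a',b,c)$ lies in $[a',b]$ and in $[b,c]$, and these should, up to the interval-monotonicity axiom (I2), sit inside neighbourhoods of $[a,b]$ and $[a,c]$ respectively once we know $a'$ is close to $a$.

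First I would set $R=d(a,a')$. I would observe that $a' \in \N_R([a,b])$ trivially (take the endpoint $a$ of the interval, or rather $a'\thicksim_R a \in [a,b]$), so by (I2) we get $[a',b] \subseteq \N_{\f(R)}([a,b])$; likewise $a' \in \N_R([a,c])$ gives $[a',c]\subseteq \N_{\f(R)}([a,c])$. Since $\mu(a',b,c)$ lies in $[a',b]\cap[b,c]\cap[c,a']$, it therefore lies in $\N_{\f(R)}([a,b]) \cap [b,c] \cap \N_{\f(R)}([a,c])$, hence (using $[b,c]\subseteq \N_{\f(R)}([b,c])$ and enlarging the neighbourhood of the first and third sets to $\N_{\f(R)}$) in $\N_{\f(R)}([a,b]) \cap \N_{\f(R)}([b,c]) \cap \N_{\f(R)}([a,c])$. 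On the other hand $\mu(a,b,c)$ lies in $[a,b]\cap[b,c]\cap[c,a] \subseteq \N_{\f(R)}([a,b]) \cap \N_{\f(R)}([b,c]) \cap \N_{\f(R)}([c,a])$ as well. Now (I3) with parameter $\f(R)$ bounds the diameter of this triple intersection by $\g(\f(R))$, so $d(\mu(a,b,c),\mu(a',b,c)) \leqslant \g(\f(R)) = \g(\f(d(a,a')))$. The statement about axiom (C1) is then immediate, reading off $\rho = \g\circ\f$, and noting this $\rho$ is non-decreasing since $\f,\g$ are; symmetry (M2) of $\mu$ lets us move any one of the three arguments, not just the first.

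The main thing to be careful about — the only place where there is anything to check — is the application of (I2): its hypothesis requires a point in $\N_R([a,b])$, and I am feeding it $a'$, which is $R$-close to the endpoint $a \in [a,b]$, so $a' \in \N_R([a,b])$ and (I2) applies to give $[a,a'] \subseteq \N_{\f(R)}([a,b])$; but what I actually need is $[a',b] \subseteq \N_{\f(R)}([a,b])$, i.e. I should instead apply (I2) in the form ``$c \in \N_R([a,b]) \implies [c,b]\subseteq \N_{\f(R)}([a,b])$'', which follows from the stated version by symmetry of intervals (I1) together with the observation that $[a,c]$ and $[c,b]$ play symmetric roles, or more directly by re-deriving it: $b \in [a,b] \subseteq \N_R([a,b])$ and $a' \in \N_R([a,b])$, and one wants the interval between these two to stay near $[a,b]$. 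This is exactly the coarse analogue of the classical fact ``$c\in[a,b] \implies [c,b]\subseteq[a,b]$'', so it should be harmless, but it is worth spelling out rather than waving at. Everything else is bookkeeping with non-decreasing functions and nested neighbourhoods.
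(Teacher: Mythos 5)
Your proof is correct and follows essentially the same route as the paper: place both $\mu(a,b,c)$ and $\mu(a',b,c)$ in $\N_{\f(R)}([a,b]) \cap \N_{\f(R)}([b,c]) \cap \N_{\f(R)}([c,a])$ using (I1) and (I2), then invoke the diameter bound of (I3). The (I2) subtlety you flag is resolved exactly as you say, by swapping the endpoints via (I1) before applying (I2), which is what the paper's citation of ``(I1), (I2)'' implicitly does.
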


\begin{proof}
Set $R=d(a,a')$, then $a' \in \mathcal{N}_R([a,b])$ and $a' \in \mathcal{N}_R([c,a])$. By (I1) and (I2), we have
$$[a',b] \subseteq \mathcal{N}_{\f(R)}([a,b])\quad\mbox{and}\quad [c, a'] \subseteq \mathcal{N}_{\f(R)}([c,a]).$$
Hence
$$\mu(a',b,c) \in [a',b] \cap [b,c] \cap [c,a'] \subseteq \mathcal{N}_{\f(R)}([a,b]) \cap \mathcal{N}_{\f(R)}([b,c]) \cap \mathcal{N}_{\f(R)}([c,a]).$$
Combined with (I3), we obtain that $\mu(a',b,c) \thicksim_{\g(\f(R))} \mu(a,b,c)$.
\end{proof}

CONVENTION: Following this lemma, given parameters $\f, \g$ we will fix the function $\rho:=3\g\circ \f$ so that $d(\mu(a,b,c), \mu(a',b',c')) \leqslant \rho(d(a,a')+d(b,b')+d(c,c')).$

We now turn our attention to axiom (C2). Fix a coarse interval space $(X,d,[\cdot,\cdot])$ with parameters $\f,\g$ and the induced operator $\mu$. We begin with the following elementary lemma, which can be deduced directly from the definition.
\begin{lem}\label{estm1}
If $c \thicksim_R \mu(a,b,c)$, then $c\in \mathcal{N}_R([a,b])$; conversely, if $c\in \mathcal{N}_R([a,b])$ then $c \thicksim_{\g(R)} \mu(a,b,c)$ for any $a,b,c\in X$.
\end{lem}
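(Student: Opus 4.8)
The plan is to unwind the definition of $\mu(a,b,c)$ as a point in the triple intersection $[a,b]\cap[b,c]\cap[c,a]$, together with axiom (I1) that $[c,c]=\{c\}$, and then apply axiom (I3) to control diameters of neighbourhoods of such intersections. The statement has two halves, and I would treat them separately.

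For the first implication, suppose $c\thicksim_R\mu(a,b,c)$. Since $\mu(a,b,c)\in[a,b]$ by construction, the point $c$ lies within distance $R$ of a point of $[a,b]$, i.e. $c\in\mathcal{N}_R([a,b])$. This direction is essentially immediate and requires no estimate beyond the definition of $\mu$ as a selector from the triple intersection.

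For the converse, suppose $c\in\mathcal{N}_R([a,b])$. I want to locate $c$ inside a small neighbourhood of the triple intersection $\mathcal{N}_\bullet([a,b])\cap\mathcal{N}_\bullet([b,c])\cap\mathcal{N}_\bullet([c,a])$ and then invoke (I3) to compare $c$ with $\mu(a,b,c)$. The point is that $c$ itself lies in $\mathcal{N}_R([a,b])$ by hypothesis, and $c\in[b,c]\cap[c,a]$ trivially since $c\in[c,c]=\{c\}\subseteq[b,c]$ and likewise $c\in[c,a]$ (using (I1): $[c,c]=\{c\}$, and $c=\mu(b,c,c)$-type reasoning, or more directly that $c$ is always an endpoint of its own intervals). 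Hence both $c$ and $\mu(a,b,c)$ lie in $\mathcal{N}_R([a,b])\cap\mathcal{N}_R([b,c])\cap\mathcal{N}_R([c,a])$ (for $\mu(a,b,c)$ this holds even with $R=0$), so by (I3) their distance is at most $\g(R)$, giving $c\thicksim_{\g(R)}\mu(a,b,c)$.

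The only mild subtlety — and the step I would be most careful about — is verifying that $c$ genuinely lies in $[b,c]$ and in $[c,a]$, which rests on (I1) telling us $[c,c]=\{c\}$ together with the monotonicity/endpoint behaviour encoded in (I1)–(I2); since $c\in[c,c]$ and $c$ is an endpoint, $c\in[b,c]$ follows from (I2) with $R=0$ applied to the pair $(c,b)$ and the point $c\in[c,b]$, or simply from the fact that in the definition of $\mu$ every point is in the interval between itself and anything. After that, the diameter bound (I3) does all the work, and the constant $\g(R)$ is exactly the parameter appearing there. No other axioms are needed.
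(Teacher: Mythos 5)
Your overall strategy is exactly the intended one (the paper offers no written proof, calling the lemma immediate from the definition): the forward direction is trivial since $\mu(a,b,c)\in[a,b]$, and the converse places both $c$ and $\mu(a,b,c)$ in $\mathcal{N}_R([a,b])\cap\mathcal{N}_R([b,c])\cap\mathcal{N}_R([c,a])$ and applies the diameter bound in (I3) to get the constant $\g(R)$.

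The one step you flag as subtle --- that $c\in[b,c]$ and $c\in[c,a]$ --- is indeed the only thing to check, and it is true, but both justifications you offer are faulty. Applying (I2) with $R=0$ ``to the point $c\in[c,b]$'' is circular: you would need $c\in[c,b]$ to even invoke (I2). And appealing to ``the definition of $\mu$, where every point is in the interval between itself and anything'' is not available here: in a coarse interval space the intervals are abstract data given by the map $\I$, not defined as $\{\mu(a,x,b)\}$ for some median-type operator, so there is no such formula to quote. The correct derivation uses (I1) together with the \emph{non-emptiness} clause of (I3) applied to the degenerate triple $(b,c,c)$: the intersection $[b,c]\cap[c,c]\cap[c,b]$ is non-empty, and since $[c,c]=\{c\}$ and $[c,b]=[b,c]$ by (I1), this forces $c\in[b,c]$; the same argument with $(c,a,c)$ gives $c\in[c,a]$. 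With that substitution your proof is complete and agrees with what the paper intends.
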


The following estimates are a little less obvious.

\begin{lem}\label{estmA}
Let $b\in \mathcal{N}_{R_1}([a,c])$ and $c\in \mathcal{N}_{R_2}([a,d])$. Then $c\in \mathcal{N}_{h(R_1,R_2)}([b,d])$ where $h(R_1,R_2)=\g(R_2)+\g(\f(R_1+\f(R_2)))$.
\end{lem}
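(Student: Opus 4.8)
The goal is to show that $c\in\mathcal N_{h(R_1,R_2)}([b,d])$, which by Lemma \ref{estm1} amounts to producing a good bound on $d(c,\mu(b,d,c))$. First I would unpack the two hypotheses using Lemma \ref{estm1}: from $b\in\mathcal N_{R_1}([a,c])$ we get $b\thicksim_{\g(R_1)}\mu(a,c,b)$, and from $c\in\mathcal N_{R_2}([a,d])$ we get $c\thicksim_{\g(R_2)}\mu(a,d,c)$. The strategy is then to express $\mu(b,d,c)$ in terms of $a,c,d$ by substituting the approximation $b\approx\mu(a,b,c)$ into the first slot and pushing the median around with the coarse $4$/$5$-point conditions, arriving at something visibly close to $c$ because $c$ lies near $[a,d]$.

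Concretely, I would write $\mu(b,d,c)\thicksim_{\rho(\g(R_1))}\mu(\mu(a,c,b),d,c)$ using axiom (C1), then apply the coarse $4$-point identity (C2) / estimate (\ref{five point estimate}) to rearrange $\mu(\mu(a,c,b),d,c)$ — reassociating so that $c$ is merged inward, e.g. comparing it with $\mu(a,c,\mu(b,d,c))$ or with $\mu(\mu(a,c,d),\ldots)$, and then using $\mu(c,c,\cdot)=c$-type localisation (M1) to collapse a repeated $c$. The point is that after these manipulations $\mu(b,d,c)$ is within a controlled distance — involving $\f$ applied to $R_1$-scale quantities — of $\mu(a,d,c)$, which in turn is $\g(R_2)$-close to $c$. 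Tracking the constants through these two or three applications of (C1), (C2) and (M1) should reproduce exactly $h(R_1,R_2)=\g(R_2)+\g(\f(R_1+\f(R_2)))$: the outer $\g$ comes from the final application of Lemma \ref{estm1} to convert an $\mathcal N_\bullet([b,d])$ membership statement, the $\f(R_1+\f(R_2))$ records the interval-inclusion (I2) cost of replacing one endpoint plus the $\f(R_2)$-perturbation of $c$, and the additive $\g(R_2)$ is the initial cost of straightening $c$ onto $[a,d]$.

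An alternative, perhaps cleaner route that more directly yields the stated constant: since $c\in\mathcal N_{R_2}([a,d])$, axiom (I2) gives $[a,c]\subseteq\mathcal N_{\f(R_2)}([a,d])$, and since $b\in\mathcal N_{R_1}([a,c])$, (I2) again gives $[a,b]\subseteq\mathcal N_{\f(R_1)}([a,c])\subseteq\mathcal N_{\f(R_1)+\f(R_2)}([a,d])$; but I actually want $[b,d]$, not $[a,b]$, so I would instead observe $b\in\mathcal N_{R_1}([a,c])\subseteq\mathcal N_{R_1+\f(R_2)}([a,d])$ (shifting the interval $[a,c]$ into $[a,d]$ via (I2)), hence $d\in\mathcal N_{R_1+\f(R_2)}([a,d])$ trivially and $[a,d]=[d,a]$ lets (I2) give $[d,b]\subseteq\mathcal N_{\f(R_1+\f(R_2))}([d,a])=\mathcal N_{\f(R_1+\f(R_2))}([a,d])$. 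Then $\mu(b,d,c)\in[b,d]\subseteq\mathcal N_{\f(R_1+\f(R_2))}([a,d])$, and since also $c\in\mathcal N_{R_2}([a,d])$, both $c$ and $\mu(b,d,c)$ lie near $[a,d]$; feeding $c\in\mathcal N_{R_2}([a,d])$ through Lemma \ref{estm1} and combining gives $d(c,\mu(b,d,c))\le d(c,\mu(a,d,c))+d(\mu(a,d,c),\mu(b,d,c))$ — but controlling the second term again needs (C1), so this route still must pass through a Lipschitz estimate. Either way, the final line is: $c\thicksim_{\text{(something)}}\mu(b,d,c)$, hence by Lemma \ref{estm1} $c\in\mathcal N_{\g(R_2+\f(R_1+\f(R_2)))}([b,d])$ after bundling constants, matching $h$ up to the paper's bookkeeping.

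The main obstacle is purely the constant-tracking: making sure the particular order of substitutions produces $\g(\f(R_1+\f(R_2)))$ with $\f$ applied to $R_1+\f(R_2)$ rather than, say, $\f(R_1)+\f(R_2)$ inside the $\g$. This forces the argument to route the $R_2$-perturbation of $c$ through an interval-inclusion step (I2) \emph{before} the $R_1$-perturbation of $b$ is absorbed, i.e. one must first enlarge the base interval from $[a,c]$ to $[a,d]$ at cost $\f(R_2)$, then apply (I2) once more at cost $\f(R_1+\f(R_2))$, and only then convert membership back to a distance via $\g$; the additive $\g(R_2)$ is peeled off at the start. Getting this sequencing right, and confirming that no extra $\rho$-factors sneak in beyond what is already absorbed into $\f=\rho+\kappaiv$ and $\g$, is the one genuinely fiddly point; everything else is a direct application of Lemma \ref{estm1}, axioms (I1)--(I3), and the coarse $4$/$5$-point estimates.
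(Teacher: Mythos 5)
There is a genuine gap, and it occurs at exactly the point you flag as ``needs (C1)''. Recall the setting of this lemma: $(X,d,\I)$ is a coarse interval space and $\mu$ is its \emph{induced} operator, and the whole chain of Lemmas \ref{estmA}--\ref{estmC} exists precisely to establish axiom (C2) for $\mu$ (Theorem \ref{coarse interval converse}). So your first route, which rearranges $\mu(\mu(a,c,b),d,c)$ using (C2) and the five-point estimate (\ref{five point estimate}), is circular: those tools are not yet available for $\mu$, and the constants in the lemma must be expressed purely in terms of $\f,\g$. Your second route avoids that, and its first steps ($[a,c]\subseteq\N_{\f(R_2)}([a,d])$, hence $b\in\N_{R_1+\f(R_2)}([a,d])$, hence $[b,d]\subseteq\N_{\f(R_1+\f(R_2))}([a,d])$ by (I1),(I2)) are exactly the paper's. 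But it then stalls on the term $d(\mu(a,d,c),\mu(b,d,c))$, which you propose to control by the Lipschitz-type estimate of Lemma \ref{C1}. That estimate bounds this distance by $\rho(d(a,b))$ (with $\rho=3\g\circ\f$), and $d(a,b)$ is \emph{not} controlled by the hypotheses: $b$ is only assumed close to the interval $[a,c]$, which may be arbitrarily long, so this step produces no uniform bound and the argument fails. This is also why your final constant comes out as $\g\bigl(R_2+\f(R_1+\f(R_2))\bigr)$ rather than $h(R_1,R_2)=\g(R_2)+\g(\f(R_1+\f(R_2)))$ --- the discrepancy is not mere bookkeeping but a symptom of the wrong comparison mechanism.

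The missing idea is that the comparison of $\mu(b,c,d)$ with $\mu(a,c,d)$ must be made through axiom (I3), not through Lemma \ref{C1}. From $b\in\N_{R_1}([a,c])$ and (I1),(I2) one also gets $[b,c]\subseteq\N_{\f(R_1)}([a,c])$, so
$$\mu(b,c,d)\in[b,c]\cap[c,d]\cap[d,b]\subseteq\N_{\f(R_1)}([a,c])\cap[c,d]\cap\N_{\f(R_1+\f(R_2))}([a,d]),$$
while trivially $\mu(a,c,d)\in[a,c]\cap[c,d]\cap[a,d]$. Both points therefore lie in $\N_{R'}([a,c])\cap\N_{R'}([c,d])\cap\N_{R'}([a,d])$ for $R'=\f(R_1+\f(R_2))$, and (I3) bounds the diameter of this triple intersection by $\g(R')$, independently of $d(a,b)$. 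Hence $\mu(b,c,d)\thicksim_{\g(\f(R_1+\f(R_2)))}\mu(a,c,d)\thicksim_{\g(R_2)}c$, the last estimate being Lemma \ref{estm1} applied to $c\in\N_{R_2}([a,d])$; since $\mu(b,c,d)\in[b,d]$, this gives $c\in\N_{h(R_1,R_2)}([b,d])$ with exactly the stated $h$. So your interval-inclusion bookkeeping is on target, but without replacing the (C1)/(C2) steps by this (I3) argument the proof does not go through.
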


\begin{proof}
Since $b\in \mathcal{N}_{R_1}([a,c])$, axioms (I1) and (I2) imply that $[b,c] \subseteq \mathcal{N}_{\f(R_1)}([a,c])$. Since $c\in \mathcal{N}_{R_2}([a,d])$, again by (I2) we have $[a,c] \subseteq \mathcal{N}_{\f(R_2)}([a,d])$. Hence $b \in \mathcal{N}_{R_1}([a,c]) \subseteq \mathcal{N}_{R_1+\f(R_2)}([a,d])$, and consequently $[b,d] \subseteq \mathcal{N}_{\f(R_1+\f(R_2))}([a,d])$ by axioms (I1) and (I2). Combining them together with axiom (I3), we have
$$\mu(b,c,d)\in [b,c]\cap[c,d]\cap[d,b] \subseteq \mathcal{N}_{\f(R_1)}([a,c]) \cap [c,d] \cap \mathcal{N}_{\f(R_1+\f(R_2))}([a,d]),$$
which implies $\mu(b,c,d) \thicksim_{\g(\f(R_1+\f(R_2)))} \mu(a,c,d) \thicksim_{\g(R_2)} c$ (we use Lemma \ref{estm1} in the second estimate since $c\in \mathcal{N}_{R_2}([a,d])$). So the conclusion holds.
\end{proof}

\begin{cor}\label{corA}
Suppose the Hausdorff distance $d_H([a,b],[a,c]) \leqslant R$, then $d(b,c)\leqslant h(R,R)$.
\end{cor}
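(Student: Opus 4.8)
The statement to prove is Corollary~\ref{corA}: if $d_H([a,b],[a,c]) \leqslant R$ then $d(b,c) \leqslant h(R,R)$. The plan is to extract from the hypothesis on the Hausdorff distance a pair of ``betweenness'' facts that feed directly into Lemma~\ref{estmA}, applied twice with the roles of $b$ and $c$ interchanged, and then combine the two conclusions using axiom (I3) (the uniqueness-up-to-bounded-error of the median point). The key observation is that $b \in [a,b]$ always (taking $x=b$ in the definition of the interval, since $\mu(a,b,b)=b$ by (M1)), and similarly $c \in [a,c]$; so the Hausdorff bound immediately gives $b \in \mathcal N_R([a,c])$ and $c \in \mathcal N_R([a,b])$.

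First I would record these two containments. From $b \in \mathcal N_R([a,c])$ and the trivial fact $c \in \mathcal N_R([a,c]) \subseteq \mathcal N_R([a,c])$ — actually more simply, we also have $b \in \mathcal N_R([a,b])$ trivially, and what we want is to run Lemma~\ref{estmA} with a chain $a$–$c$–$\cdot$. Concretely, apply Lemma~\ref{estmA} with the substitution $(a,b,c,d) \mapsto (a, b, c, b)$: the hypotheses become $b \in \mathcal N_R([a,c])$ (which we have) and $c \in \mathcal N_R([a,b])$ (which we have), and the conclusion is $c \in \mathcal N_{h(R,R)}([b,b]) = \mathcal N_{h(R,R)}(\{b\})$ by (I1), i.e.\ $d(b,c) \leqslant h(R,R)$. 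That is exactly the desired inequality, so in fact the corollary follows from a single well-chosen application of Lemma~\ref{estmA}.

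Thus the proof is short: note $b \in [a,b]$ and $c \in [a,c]$ by (M1), so $d_H([a,b],[a,c]) \leqslant R$ yields $b \in \mathcal N_R([a,c])$ and $c \in \mathcal N_R([a,b])$; then invoke Lemma~\ref{estmA} with $d=b$ to get $c \in \mathcal N_{h(R,R)}([b,b]) = \mathcal N_{h(R,R)}(\{b\})$, which is precisely $d(b,c) \leqslant h(R,R)$. The only subtlety — and the one place to be careful — is checking that the index substitution into Lemma~\ref{estmA} matches its hypotheses literally: the lemma wants $b' \in \mathcal N_{R_1}([a',c'])$ and $c' \in \mathcal N_{R_2}([a',d'])$ and concludes $c' \in \mathcal N_{h(R_1,R_2)}([b',d'])$, so with $(a',b',c',d') = (a,b,c,b)$ and $R_1=R_2=R$ everything lines up, using (I1) to identify $[b,b]$ with $\{b\}$. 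There is no real obstacle here; the content of the corollary is entirely carried by Lemma~\ref{estmA}, and this is just the clean specialization $d=b$.
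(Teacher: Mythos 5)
Your proof is correct and is essentially identical to the paper's: both take $d:=b$ in Lemma \ref{estmA}, using $b\in\mathcal N_R([a,c])$ and $c\in\mathcal N_R([a,b])$ from the Hausdorff bound, and conclude $c\in\mathcal N_{h(R,R)}([b,b])=\mathcal N_{h(R,R)}(\{b\})$ via (I1). The only nitpick is that in this section intervals come from the abstract structure $\I$, so the fact $b\in[a,b]$, $c\in[a,c]$ should be justified from (I1) and (I3) (the triple intersection with $[b,b]=\{b\}$) rather than from (M1) applied to the median-defined interval, but this is immediate and does not affect the argument.
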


\begin{proof}
By assumption, $b\in \mathcal{N}_{R}([a,c])$ and $c\in \mathcal{N}_{R}([a,b])$. Now putting $d:=b$ and applying Lemma \ref{estmA}, we have $c\in \mathcal{N}_{h(R,R)}([b,b])$. Since $[b,b]=\{b\}$ by axiom (I1), we have $d(b,c)\leqslant h(R,R)$.
\end{proof}

\begin{lem}\label{estmB}
For any $a,b,c,d\in X$, we have $\mu(a,\mu(a,c,d),\mu(b,c,d)) \thicksim_{\kappa''} \mu(a,c,d)$, where $\kappa''=\g(\f(0)+\g\f^2(0))$. Here we use the notation $\f^2(0):=\f \circ \f(0)$.
\end{lem}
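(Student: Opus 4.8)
The statement asserts that $\mu(a,\mu(a,c,d),\mu(b,c,d))$ is close to $\mu(a,c,d)$, with an explicit constant $\kappa''=\g(\f(0)+\g\f^2(0))$. The natural strategy is to identify an interval that both points lie close to, and then invoke axiom (I3) (via the uniqueness-up-to-bounded-error characterisation of $\mu$, or directly via the diameter bound). The obvious candidate is $[a,\mu(b,c,d)]$: clearly $\mu(a,\mu(a,c,d),\mu(b,c,d))\in[a,\mu(b,c,d)]$ by definition of the induced operator, so the real content is to show that $\mu(a,c,d)$ is also within bounded distance of this interval, and that both points lie near a second common interval so that (I3) pins them together. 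So the plan is: first show $\mu(a,c,d)\in\N_{?}([a,\mu(b,c,d)])$, and second show both points lie near $[a,\mu(a,c,d)]$ (this is automatic for the right-hand side, and for the left-hand side it will follow from an (I2)-type containment once we know $\mu(a,c,d)$ is close to $[a,\mu(b,c,d)]$). Then the two points both lie in $\N([a,\mu(b,c,d)])\cap\N([a,\mu(a,c,d)])\cap\N(\{\text{something}\})$ and (I3) gives the bound.

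Here is the cleaner route I would actually take, exploiting Lemma \ref{estmA} with carefully chosen arguments. Note $\mu(a,c,d)\in[a,c]$, so $\mu(b,c,d)$... — more usefully, apply Lemma \ref{estmA} with the substitution $(a,b,c,d)\rightsquigarrow(c,\mu(a,c,d),\mu(b,c,d),a)$: we need "$b'\in\N_{R_1}([a',c'])$" i.e. $\mu(a,c,d)\in\N_{R_1}([c,\mu(b,c,d)])$ and "$c'\in\N_{R_2}([a',d'])$" i.e. $\mu(b,c,d)\in\N_{R_2}([c,a])$. The latter holds with $R_2=0$ since $\mu(b,c,d)\in[b,c]\cap[c,d]$, hmm, that's not $[c,a]$. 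Let me instead use $d'=d$: apply Lemma \ref{estmA} in the form that puts $\mu(a,c,d)$ near $[\,\mu(a,c,d)\text{-adjacent interval}\,,\mu(b,c,d)]$. The key observations are that $\mu(a,c,d)\in[c,d]$ and $\mu(b,c,d)\in[c,d]$, both with zero error, and $[c,d]\supseteq$ a geodesic-like structure; combined with the containment $[\mu(a,c,d),\mu(b,c,d)]\subseteq\N_{\f(0)}([c,d])$ from (I1),(I2), one gets that the points $\mu(a,c,d)$ and $\mu(b,c,d)$ are "(I2)-comparable" inside $[c,d]$. Then running the estimate of Lemma \ref{estmA} with all the base radii equal to $0$ (since membership in $[c,d]$ is exact) produces precisely constants of the shape $\g(\f(0)+\g\f^2(0))$, which matches the claimed $\kappa''$.

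The main obstacle is bookkeeping: threading the exact-membership facts ($\mu(a,c,d)\in[c,d]$, $\mu(b,c,d)\in[c,d]$, $\mu(a,\mu(a,c,d),\mu(b,c,d))\in[a,\mu(b,c,d)]$) through (I2) and (I3) so that every radius that appears is either $0$ or one of $\f(0),\f^2(0),\g\f^2(0),\f(0)+\g\f^2(0)$, and no superfluous additive constants creep in — the target constant $\kappa''=\g(\f(0)+\g\f^2(0))$ is tight enough that a sloppy application of Lemma \ref{C1}'s convention ($\rho=3\g\circ\f$) would overshoot. Concretely I expect the proof to be: (i) $\mu(a,c,d),\mu(b,c,d)\in[c,d]$ exactly; (ii) hence by (I2), $[\,\mu(b,c,d),\text{anything near }[c,d]\,]\subseteq\N_{\f(0)}([c,d])$ and iterating, the relevant interval $[\mu(a,c,d),\mu(b,c,d)]$ sits in $\N_{\f^2(0)}$ of a fixed interval through $c,d$; (iii) apply Lemma \ref{estmA}-style reasoning to place $\mu(a,c,d)$ within $\g(\f(0)+\g\f^2(0))$ of $[a,\mu(b,c,d)]$; (iv) since $\mu(a,\mu(a,c,d),\mu(b,c,d))$ is by construction in $[a,\mu(a,c,d)]\cap[a,\mu(b,c,d)]\cap[\mu(a,c,d),\mu(b,c,d)]$ and $\mu(a,c,d)$ is close to all three of these $\N$-neighbourhoods, (I3) forces the two points to be $\g(\f(0)+\g\f^2(0))$-close. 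I would also double-check whether the roles of $c$ and $d$ need to be symmetrised, but by (M2)/(I1) they are interchangeable, so a single application suffices.
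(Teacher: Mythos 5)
Your overall skeleton does match the paper's: reduce everything to showing that $\mu(a,c,d)$ lies in a controlled neighbourhood of $[a,\mu(b,c,d)]$, and then pin the two points together via (I3) (your step (iv) is essentially Lemma \ref{estm1} applied to that interval, and it is structurally fine). But the crux, your step (iii), is asserted rather than proved, and it cannot be obtained by ``Lemma \ref{estmA}-style reasoning'' alone. To apply Lemma \ref{estmA} with conclusion $\mu(a,c,d)\in\N([a,\mu(b,c,d)])$ you need as input that $a$ or $\mu(b,c,d)$ is suitably close to an interval involving $\mu(a,c,d)$, and the natural choices either fail (as your own second paragraph records: ``that's not $[c,a]$'') or are precisely the statement being proved, so the attempt is circular. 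The paper supplies the missing device: the auxiliary point $m=\mu(a,\mu(a,x,c),d)$, where $x=\mu(b,c,d)$. By construction $m\in[a,\mu(a,x,c)]\subseteq\N_{\f(0)}([a,x])$ by (I2) (since $\mu(a,x,c)\in[a,x]$), and further applications of (I2) show $m\in\N_{\f(0)}([a,c])\cap\N_{\f^2(0)}([c,d])\cap[a,d]$; then (I3) gives $m\thicksim_{\g(\f^2(0))}\mu(a,c,d)$, hence $\mu(a,c,d)\in\N_{\f(0)+\g\f^2(0)}([a,x])$, and Lemma \ref{estm1} concludes. Nothing in your plan produces this auxiliary point or any substitute for it, so the argument is incomplete at its central step.

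There are also problems with the constants, which matter since you aim at the stated $\kappa''$. Even granting your step (iii) in the form you give it --- $\mu(a,c,d)$ within $\g(\f(0)+\g\f^2(0))$ of $[a,\mu(b,c,d)]$ --- your step (iv) would then apply (I3) at that radius and could only give $\g\bigl(\g(\f(0)+\g\f^2(0))\bigr)$-closeness, overshooting $\kappa''$; the correct intermediate statement is membership in $\N_{\f(0)+\g\f^2(0)}([a,\mu(b,c,d)])$, with $\g$ applied only once at the very end. Likewise, the claim that running Lemma \ref{estmA} with all radii $0$ ``produces precisely constants of the shape $\g(\f(0)+\g\f^2(0))$'' is not correct: $h(0,0)=\g(0)+\g(\f^2(0))$. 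Finally, your intermediate claim that $[\mu(a,c,d),\mu(b,c,d)]$ lies in $\N_{\f^2(0)}([c,d])$ does not follow directly from (I2), which only controls intervals sharing an endpoint with the ambient one; fortunately that containment is not needed for your step (iv), which only requires that each point lies in the intervals having it as an endpoint (a consequence of (I1) and (I3)).
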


\begin{proof}
Setting $x=\mu(b,c,d)$, we consider $m=\mu(a,\mu(a,x,c),d)\in [a,\mu(a,x,c)] \subseteq \mathcal{N}_{\f(0)}([a,x])$. Taking $y=\mu(a,x,c)=\mu(a,\mu(b,c,d),c)\in [a,c]$, we have $[a,y] \subseteq \mathcal{N}_{\f(0)}([a,c])$ by (I2), which implies $m \in \mathcal{N}_{\f(0)}([a,c])$. Again by (I2), $y \in [c,\mu(b,c,d)] \subseteq \mathcal{N}_{\f(0)}([c,d])$, so $m\in [y,d] \subseteq \mathcal{N}_{\f^2(0)}([c,d])$. Combining them together, we have
$$m\in \mathcal{N}_{\f(0)}([a,c]) \cap \mathcal{N}_{\f^2(0)}([c,d]) \cap [a,d],$$
which implies $\mu(a,c,d)\thicksim_{\g(\f^2(0))}m$ by (I3). Hence $\mu(a,c,d) \in \mathcal N_{\f(0)+\g\f^2(0)}([a,x])$. Finally, by Lemma \ref{estm1}, we have $\mu(a,\mu(a,c,d),x)\thicksim_{\g(\f(0)+\g\f^2(0))} \mu(a,c,d)$.
\end{proof}

From now on, let us fix the constant $\kappa''=\g(\f(0)+\g\f^2(0))$.

\begin{lem}\label{estmC}
For any $R_1,R_2>0$, there exists a constant $\lambda(R_1,R_2)>0$ such that for any $b\in \mathcal{N}_{R_1}([a,c]) \cap \mathcal{N}_{R_2}([a,d])$ and $x\in [c,d]$ we have $b\in \mathcal{N}_{\lambda(R_1,R_2)}([a,x])$. In particular, taking $x=\mu(a,c,d)$ we have:
$$\mathcal{N}_{R_1}([a,c]) \cap \mathcal{N}_{R_2}([a,d]) \subseteq \mathcal{N}_{\lambda(R_1,R_2)}([a,\mu(a,c,d)]).$$
\end{lem}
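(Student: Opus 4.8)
The plan is to reduce to the case $x=\langle a,c,d\rangle$ and then run a chase through the interval axioms, since at this stage the five-point estimate (\ref{five point estimate})---which would render the statement trivial, as in a genuine median algebra $\langle a,b,\langle a,c,d\rangle\rangle=\langle\langle a,b,a\rangle,\langle a,b,c\rangle,d\rangle=b$ whenever $b\in[a,c]\cap[a,d]$---is not yet available, being a consequence of (C2). So everything must run through (I1)--(I3), Lemma \ref{C1}, and the interval-theoretic Lemmas \ref{estm1}, \ref{estmA} and \ref{estmB}.

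\emph{Reduction.} Given $x\in[c,d]$, write $x=\langle c,z,d\rangle$. Lemma \ref{estmB}, applied with $b$ taken to be $z$, gives $\langle a,\langle a,c,d\rangle,x\rangle\thicksim_{\kappa''}\langle a,c,d\rangle$, so Lemma \ref{estm1} gives $\langle a,c,d\rangle\in\mathcal{N}_{\kappa''}([a,x])$, whence $[a,\langle a,c,d\rangle]\subseteq\mathcal{N}_{\f(\kappa'')}([a,x])$ by (I2). Thus, once the displayed ``in particular'' inclusion is established with some constant $\lambda_0(R_1,R_2)$, the general statement follows with $\lambda(R_1,R_2)=\lambda_0(R_1,R_2)+\f(\kappa'')$.

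\emph{The special case.} Let $b\in\mathcal{N}_{R_1}([a,c])\cap\mathcal{N}_{R_2}([a,d])$. The key move is to pass to the honest interval point $u:=\langle a,b,c\rangle\in[a,c]$: Lemma \ref{estm1} gives $u\thicksim_{\g(R_1)}b$, hence $u\in\mathcal{N}_{R'}([a,d])$ with $R'=\g(R_1)+R_2$. I would then prove the sub-claim $\langle a,c,d\rangle\in\mathcal{N}_{\rho(\g(R'))+\kappa''}([c,u])$. To do so, set $u':=\langle a,u,d\rangle$; Lemma \ref{estm1} gives $u\thicksim_{\g(R')}u'$, so (C1) gives $\langle c,\langle a,c,d\rangle,u\rangle\thicksim_{\rho(\g(R'))}\langle c,\langle a,c,d\rangle,u'\rangle$, and, since $\langle a,c,d\rangle=\langle c,a,d\rangle$ and $u'=\langle u,a,d\rangle$, Lemma \ref{estmB} applied with $(a,b,c,d)$ replaced by $(c,u,a,d)$ gives $\langle c,\langle a,c,d\rangle,u'\rangle\thicksim_{\kappa''}\langle a,c,d\rangle$; combining these and invoking Lemma \ref{estm1} once more proves the sub-claim. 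Finally I would apply Lemma \ref{estmA} based at the vertex $c$: since $u\in[c,a]$ and, by the sub-claim, $\langle a,c,d\rangle\in\mathcal{N}_{S_1}([c,u])$ with $S_1=\rho(\g(R'))+\kappa''$, Lemma \ref{estmA} yields $u\in\mathcal{N}_{h(S_1,0)}([a,\langle a,c,d\rangle])$, and hence $b\in\mathcal{N}_{h(S_1,0)+\g(R_1)}([a,\langle a,c,d\rangle])$.

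The hard part is precisely this special case, and within it the sub-claim, where the effect of the unavailable five-point identity must be reproduced by hand. The device is to use the symmetry of $\mu$ so that $\langle a,c,d\rangle$ and $u'$ present themselves in exactly the shape required to feed Lemma \ref{estmB} at the vertex $c$, and then to run Lemma \ref{estmA}, again based at $c$, in order to flip the coarse membership $\langle a,c,d\rangle\in[c,u]$ into the desired $u\in[a,\langle a,c,d\rangle]$. Unwinding the estimates, one may take
$$\lambda(R_1,R_2)=h\big(\rho(\g(\g(R_1)+R_2))+\kappa'',\,0\big)+\g(R_1)+\f(\kappa''),$$
which depends only on $R_1,R_2$ and the parameters $\f,\g$ of $\I$.
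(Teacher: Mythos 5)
Your proof is correct and follows essentially the same route as the paper: the same two-step decomposition (first the special case $x=\langle a,c,d\rangle$ via Lemmas \ref{estmB} and \ref{estmA}, then the extension to arbitrary $x\in[c,d]$ via Lemma \ref{estmB} together with axiom (I2)), using the same key lemmas throughout. The only difference is cosmetic: the paper bases the Lemma \ref{estmB}/Lemma \ref{estmA} step at the vertex $d$, placing $\langle a,c,d\rangle$ coarsely in $[b,d]$ and feeding the $R_2$-hypothesis directly into Lemma \ref{estmA}, whereas you base it at $c$ with the proxy point $u=\langle a,b,c\rangle$; both routes give a valid constant depending only on $R_1,R_2$ and the parameters.
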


\begin{proof}
Since $b\in \mathcal{N}_{R_1}([a,c])$, it follows from Lemma \ref{C1} and \ref{estmB} that
$$\mu(d,\mu(a,c,d),b) \thicksim_{\rho(\g(R_1))} \mu(d,\mu(a,c,d),\mu(a,b,c)) \thicksim_{\kappa''} \mu(a,c,d).$$
This implies $\mu(a,c,d)\in \mathcal{N}_{\rho(\g(R_1))+\kappa''}([b,d])$. Together with $b\in \mathcal{N}_{R_2}([a,d])$ and Lemma \ref{estmA}, we have
$b\in \mathcal{N}_{h(\rho(\g(R_1))+\kappa'',R_2)}([a,\mu(a,c,d)])$. On the other hand, since $x\in [c,d]$, it follows from Lemma \ref{C1}, \ref{estm1} and \ref{estmB} that
$$\mu(a,\mu(a,c,d),x) \thicksim_{\rho(\g(0))} \mu(a,\mu(a,c,d),\mu(x,c,d)) \thicksim_{\kappa''} \mu(a,c,d).$$
This implies $\mu(a,c,d)\in \mathcal{N}_{\rho(\g(0))+\kappa''}([a,x])$, hence $[a,\mu(a,c,d)] \subseteq \mathcal{N}_{\f(\rho(\g(0))+\kappa'')}([a,x])$. Combining them together, we have
$$b\in \mathcal{N}_{h(\rho(\g(R_1))+\kappa'',R_2)+\f(\rho(\g(0))+\kappa'')}([a,x]).$$
Now taking
$$\lambda(R_1,R_2)=h(\rho(\g(R_1))+\kappa'',R_2)+\f(\rho(\g(0))+\kappa''),$$
the lemma holds.
\end{proof}

Finally we are in the position to prove the following theorem.
\begin{thm}\label{coarse interval converse}
Let $(X,d,[\cdot,\cdot])$ be a coarse interval space with the induced operator $\mu$, then $(X,d,\mu)$ is a coarse median space.
\end{thm}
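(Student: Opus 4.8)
The plan is to verify the two remaining axioms, (C1) and (C2), for the induced operator $\mu$; recall that (M1) and (M2) hold by construction of $\mu$ from axiom (I3). Axiom (C1) is already established in Lemma \ref{C1} with $\rho = \g\circ\f$ (and we have fixed the symmetrised version $\rho := 3\g\circ\f$ by the convention following that lemma). So the entire content of the theorem is the coarse $4$-point condition (C2): there is a constant $\kappaiv$ with $\mu(\mu(a,b,c),b,d)\thicksim_{\kappaiv}\mu(a,b,\mu(c,b,d))$ for all $a,b,c,d\in X$. This is exactly where Sholander's classical argument is being coarsified, and where the accumulated technical lemmas (\ref{estm1}, \ref{estmA}, \ref{corA}, \ref{estmB}, \ref{estmC}) are designed to be applied.

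First I would name the two points $p := \mu(\mu(a,b,c),b,d)$ and $q := \mu(a,b,\mu(c,b,d))$ and try to show that each of them lies in a uniformly bounded neighbourhood of a common interval, so that Corollary \ref{corA} (or a direct application of (I3)) forces them to be uniformly close. The natural candidate is to show that both $p$ and $q$ lie $O(1)$-close to $[b, \mu(a,b,c)]\cap[b,d]$-type sets, or more symmetrically that the Hausdorff distance between $[\mu(a,b,c),b]$ and $[a,b]\cap[b,\cdots]$ is bounded. Concretely: by definition $p\in[\mu(a,b,c),b]$, and since $\mu(a,b,c)\in[a,b]$, axiom (I2) gives $[\mu(a,b,c),b]\subseteq\mathcal N_{\f(0)}([a,b])$, so $p\in\mathcal N_{\f(0)}([a,b])$; similarly $p\in[\mu(a,b,c),d]$ and one shows $p$ is close to $[c,d]$-ish sets using that $\mu(a,b,c)\in[b,c]$. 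The key is to locate $p$ inside $\mathcal N_{O(1)}([a,b])\cap\mathcal N_{O(1)}([b,\mu(c,b,d)])\cap\mathcal N_{O(1)}([\text{something},d])$ and then invoke Lemma \ref{estmC} (with its constant $\lambda$) to pin it down to a neighbourhood of $[a,\mu(a,\cdot,\cdot)]$, repeating the symmetric analysis for $q$, and finally comparing via Lemma \ref{estmA}/Corollary \ref{corA}. Because $\mu$ itself is only defined up to bounded error (by the remark after Definition \ref{induced ternary operator}), every estimate only needs to be correct up to an additive constant depending on $\f,\g$, which is a significant simplification.

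The main obstacle I anticipate is bookkeeping: orchestrating Lemmas \ref{estmA}, \ref{estmB} and \ref{estmC} in the correct order so that the point $p=\mu(\mu(a,b,c),b,d)$ gets shown to lie in $\mathcal N_{O(1)}([a,b])\cap\mathcal N_{O(1)}([a,d])$ — at which point \ref{estmC} applies with $x=\mu(a,b,d)$-type midpoints — and then matching this against the analogous localisation of $q$. The conceptual difficulty is that, unlike the classical setting where one has exact equalities $\mu(a,c,d)\in[a,c]\cap[c,d]\cap[d,a]$ to chain together, here each membership is only up to a controlled neighbourhood, and the neighbourhoods nest through $\f$ (which is non-decreasing but otherwise arbitrary), so one must be careful that the composition of finitely many such steps does not secretly depend on the points $a,b,c,d$. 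Once (C2) is in hand, $(X,d,\mu)$ satisfies (M1), (M2), (C1), (C2), hence is a coarse median space by definition, and the estimate (\ref{five point estimate}) follows automatically, completing the proof and establishing the coarse analogue of the converse half of Sholander's Theorem \ref{sholander}.
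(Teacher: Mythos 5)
You frame the problem correctly -- (M1), (M2) hold by construction of the induced operator, (C1) is Lemma \ref{C1}, so the entire content is (C2) -- and you point at the right toolbox, but the argument you outline for (C2) does not go through. First, showing that $p:=\mu(\mu(a,b,c),b,d)$ and $q:=\mu(a,b,\mu(c,b,d))$ both lie near a \emph{common interval} cannot force $p\thicksim q$: intervals are unbounded in general, axiom (I3) only bounds the diameter of the intersection of neighbourhoods of \emph{all three} pairwise intervals of a triple, and Corollary \ref{corA} takes as hypothesis two intervals with a common endpoint that are Hausdorff close, not two points near one interval. Second, the one concrete localisation you commit to -- that $p$ lies in $\N_{O(1)}([a,b])\cap\N_{O(1)}([a,d])$, so that Lemma \ref{estmC} applies with $x=\mu(a,b,d)$ -- is false even in an honest median algebra: in $\mathbb Z$ with $a=3n$, $b=0$, $c=n$, $d=2n$ one has $p=n$ while $[a,d]=\{2n,\dots,3n\}$, so $d(p,[a,d])=n$ is unbounded. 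What is true is only the $b$-anchored statement $p\in[b,\mu(a,b,c)]\cap[b,d]$ with $\mu(a,b,c)\in[a,b]\cap[b,c]$, so all useful localisations here hang off the point $b$, not off $a$.

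The missing idea, which is the heart of the paper's proof, is to compare \emph{intervals} rather than points: one bounds the Hausdorff distance between $[b,p]$ and $[b,q]$ and then applies Corollary \ref{corA} with common endpoint $b$. Concretely, since $p\in[b,\mu(a,b,c)]\cap[b,d]$ and $\mu(a,b,c)\in[a,b]\cap[b,c]$, axiom (I2) gives $[b,p]\subseteq\N_{\f^2(0)}([b,a])\cap\N_{\f^2(0)}([b,c])\cap\N_{\f(0)}([b,d])$; one application of Lemma \ref{estmC} absorbs the last two terms into $\N_{\lambda(\f^2(0),\f(0))}([b,\mu(b,c,d)])$, and a second application absorbs this together with $\N_{\f^2(0)}([b,a])$ into a uniform neighbourhood of $[b,\mu(a,b,\mu(b,c,d))]=[b,q]$; the symmetric argument places $[b,q]$ in a uniform neighbourhood of $[b,p]$, and Corollary \ref{corA} then yields $p\thicksim_\kappa q$ with $\kappa$ depending only on $\f,\g$. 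Your sketch cites Lemma \ref{estmC} and Corollary \ref{corA} but never identifies $[b,p]$ and $[b,q]$ as the objects to be compared; without that step (or a correct substitute, e.g.\ showing $q$ is uniformly close to all three of $[\mu(a,b,c),b]$, $[b,d]$, $[d,\mu(a,b,c)]$ and invoking (I3) for the triple $\mu(a,b,c),b,d$, which again requires \ref{estmC}-type work you have not supplied), the verification of (C2) -- and hence the theorem -- is missing.
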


\begin{proof}
It only remains to verify (C2). In other words, we need to find a constant $\kappa$ such that for any $a,b,c,d\in X$, we have
$$\mu( \mu(a,b,c),b,d ) \thicksim_\kappa \mu( a,b,\mu(c,b,d) ).$$
By axiom (I2) and Lemma \ref{estmC}, we have:
\begin{eqnarray*}
  [b,\mu( \mu(a,b,c),b,d )] &\subseteq& \mathcal{N}_{\f(0)}([b,\mu(a,b,c)]) \cap \mathcal{N}_{\f(0)}([b,d]) \\
  &\subseteq& \mathcal{N}_{\f^2(0)}([b,a]) \cap \mathcal{N}_{\f^2(0)}([b,c]) \cap \mathcal{N}_{\f(0)}([b,d])\\
  &\subseteq& \mathcal{N}_{\f^2(0)}([b,a]) \cap \mathcal{N}_{\lambda(\f^2(0),\f(0))}([b,\mu(b,c,d)])\\
  &\subseteq& \mathcal{N}_{\lambda(\f^2(0),\lambda(\f^2(0),\f(0)))}([b,\mu(a,b,\mu(b,c,d))]).
\end{eqnarray*}
Similarly we have
$$[b,\mu(a,b,\mu(b,c,d))] \subseteq \mathcal{N}_{\lambda(\f^2(0),\lambda(\f^2(0),\f(0)))}([b,\mu( \mu(a,b,c),b,d )]).$$
The above two estimates imply:
$$d_H([b,\mu( \mu(a,b,c),b,d )], [b, \mu(a,b,\mu(c,b,d ))]) \leqslant \lambda(\f^2(0),\lambda(\f^2(0),\f(0))).$$
Finally it follows from Corollary \ref{corA} that
$$\mu( \mu(a,b,c),b,d ) \thicksim_\kappa \mu(a,b,\mu(c,b,d ))$$
for $\kappa=h(\lambda(\f^2(0),\lambda(\f^2(0),\f(0))),\lambda(\f^2(0),\lambda(\f^2(0),\f(0))))$.
\end{proof}

Analogous to relaxing axioms (M1) and (M2) for a coarse median operator to axiom (C0), we consider the following notion of a coarse interval structure.
\begin{defn}\label{def: coarse interval structure}
A \emph{coarse interval structure} on a metric space $(X,d)$ is a triple $\I=(X,d,[\cdot,\cdot])$, where $[\cdot,\cdot]$ is a map from $X^2$ to $\mathcal{P}(X)$ such that  there exists a constant $\kappao >0$ for which  the following conditions hold:
\begin{itemize}
  \item[(I1)'.] For all $a,b\in X$, $d_H([a,a], \{a\}) \leqslant \kappao$ and $d_H([a,b],[b,a])\leqslant \kappao$;
  \item[(I2).] There exists a non-decreasing function $\f: \Rp \rightarrow \Rp$ such that for any $a,b\in X$ and $c\in \mathcal{N}_R([a,b])$, we have $[a,c] \subseteq \mathcal{N}_{\f(R)}([a,b])$;
  \item[(I3)'.] There exists a non-decreasing function $\g: [\kappao,+\infty) \rightarrow \Rp$ such that for any $a,b,c\in X$ and $R \geqslant \kappao$, we have $\N_\kappao([a,b]) \cap \N_\kappao([b,c]) \cap \N_\kappao([c,a]) \neq \emptyset$ and $\diam( \mathcal{N}_R([a,b]) \cap \mathcal{N}_R([b,c]) \cap \mathcal{N}_R([c,a]) ) \leqslant \g(R)$.
\end{itemize}
The constant $\kappao$ and functions $\f,\g$ in the conditions are called \emph{parameters} for $\I$.
\end{defn}

\begin{rem}\label{ends close to intervals}
By (I1)', for any point $a$ the interval $[a,a]$ lies in $B(a,\kappao)$. By (I3)', the intersection $\N_\kappao([a,a]) \cap \N_\kappao([a,b])$ must be non-empty for all $b$. As $\N_\kappao([a,a])$ lies in $B(a,2\kappao)$, it follows that $a$ must lie in $\N_{3\kappao}([a,b])$. Similarly $b\in\N_{3\kappao}([a,b])$.
\end{rem}

Recall that every coarse median is  uniformly close to some coarse median satisfying axioms (M1) and (M2). Similarly we will show that a coarse interval structure is always ``close" to another satisfying (I1)$\sim$(I3) in the following sense.
\begin{defn}
Let $(X,d, [\cdot, \cdot])$ and $(X,d, [\cdot, \cdot]')$ be coarse interval structures. We say they are \emph{uniformly close}  if there exists a constant $C>0$ such that $d_H([x,y],[x,y]')\leqslant C$ for any $x,y\in X$.
\end{defn}

\begin{lem}\label{coarse interval close}
Let $(X,d,[\cdot,\cdot])$ be a coarse interval structure. Then there exists a map $[\cdot, \cdot]'$ which is uniformly close to $[\cdot, \cdot]$ and such that $(X,d,[\cdot, \cdot]')$ is a coarse interval space.
\end{lem}

\begin{proof}
We define `fattened' intervals:
\[
[a,b]':= \N_\kappao([a,b])\cup \N_\kappao([b,a])\cup \{a,b\}
\]
for $a\neq b$, and define $[a,a]':=\{a\}$.
It is easy to see from (I1)' that $[a,a]'=\{a\}$ is $\kappa_0$-close to $[a,a]$ and that $\N_\kappao([a,b])\cup \N_\kappao([b,a])$ is $2\kappa_0$-close to $[a,b]$. By Remark \ref{ends close to intervals}, the points $a,b$ are $3\kappa_0$-close to $[a,b]$, hence $[a,b]'$ is $3\kappa_0$-close to $[a,b]$.

By construction, $[\cdot,\cdot]'$ satisfies (I1) and clearly it still satisfies (I2). The fattening of the intervals together with (I3)' ensures that $[a,b]'\cap[b,c]'\cap[c,a]'$ is non-empty for $a,b,c$ distinct. Now taking repeated points, $[a,b]'\cap[b,b]'\cap[b,a]'=\{b\}$ by construction. Hence $[a,b]'\cap[b,c]'\cap[c,a]'$ is non-empty in all cases. The above analysis shows that the $R$-neighbourhood of the interval $[a,b]'$ is contained in the $(R+3\kappa_0)$-neighbourhood of the interval $[a,b]$, so the intersection $ \mathcal{N}_R([a,b]') \cap \mathcal{N}_R([b,c]') \cap \mathcal{N}_R([c,a]')$ is contained in the intersection $ \mathcal{N}_{R+3\kappa_0}([a,b]) \cap \mathcal{N}_{R+3\kappa_0}([b,c]) \cap \mathcal{N}_{R+3\kappa_0}([c,a])$. It therefore has diameter bounded by  $\psi(R+3\kappa_0)$ by (I3)'.  This establishes (I3).
\end{proof}

Adapting the arguments we made above, we have the following correspondence between coarse median structures and coarse interval structures.
\begin{thm}\label{induce cm and ci}
\begin{enumerate}
  \item Given a coarse median structure $(X,d,\mu)$, the map $(a,b)\mapsto [a,b]$ provided by Definition \ref{interval} gives  an \emph{induced coarse interval structure} $(X,d,[\cdot,\cdot])$.
  \item Let $(X,d,[\cdot,\cdot])$ be a coarse interval structure with parameters $\kappao,\f,\g$. For any $a,b,c\in X$, choose a point in $\N_\kappao([a,b]) \cap \N_\kappao([b,c]) \cap \N_\kappao([c,a])$, denoted by $\mu(a,b,c)$. Making such a choice gives an \emph{induced coarse median structure} $(X,d,\mu)$.
  \item Furthermore the above two procedures are inverse to each other up to uniform bounds:
  \begin{itemize}
  \item For a coarse median structure $(X,d,\mu)$ with  induced coarse interval structure $(X,d,[\cdot,\cdot])$, any induced coarse median structure  $(X,d,\mu')$ is uniformly close to $(X,d,\mu)$;
  \item For a coarse interval structure $(X,d, [\cdot, \cdot])$ with any induced coarse median structure $(X,d,\mu)$, the induced coarse interval structure is uniformly close to $(X,d, [\cdot, \cdot])$.
  \end{itemize}
\end{enumerate}
\end{thm}

\begin{proof}
(1). By Remark \ref{unifclosestructure}, $(X,d,\mu)$ is uniformly close to a coarse median space $(X,d,\mu')$. Now Proposition \ref{coarse interval} shows that the map $X^2 \rightarrow \mathcal{P}(X)$ given by $[a,b]'=\{\mu(a,c,b)': c\in X\}$ gives a coarse interval space $(X,d,[\cdot,\cdot]')$. Since $\mu$ and $\mu'$ are uniformly close, $[\cdot, \cdot]$ and $[\cdot,\cdot]'$ are uniformly close as well. Hence $(X,d,[\cdot,\cdot])$ is a coarse interval structure.

(2). From Lemma \ref{coarse interval close}, $(X,d,[\cdot,\cdot])$ is uniformly close to some coarse interval space $(X,d,[\cdot,\cdot]')$. We now apply Theorem \ref{coarse interval converse} to construct an induced coarse median space  $(X,d,\mu')$. Since $[\cdot,\cdot]$ and $[\cdot,\cdot]'$ are uniformly close, $\mu$ and $\mu'$ are uniformly close as well. Hence  $(X,d,\mu)$ is a coarse median structure.

(3). First we start with the coarse median structure $(X,d,\mu)$ with parameters $\rho, \kappao, \kappaiv,\kappav$ and induced coarse interval structure $(X,d,[\cdot,\cdot])$ with some parameters $\kappao',\f,\g$. Let $(X,d,\mu')$ be any induced coarse median structure of $(X,d,[\cdot,\cdot])$. By definition for any $x,y,z\in X$, $\mu(x,y,z)'$ is some point chosen from the intersection $\N_{\kappao'}([x,y])\cap \N_{\kappao'}([y,z])\cap \N_{\kappao'}([z,x])$, \emph{a fortiori} is in the intersection  $\N_{\kappao''}([x,y])\cap \N_{\kappao''}([y,z])\cap \N_{\kappao''}([z,x])$ where $\kappao''=\max\{\kappao,\kappao'\}$. The latter contains $\mu(x,y,z)$ and is uniformly bounded with diameter at most $\psi(\kappao'')$. Hence $\mu$ and $\mu'$ are uniformly close.

Conversely given a coarse interval structure $(X,d,[\cdot,\cdot])$ with parameters $\kappao,\f,\g$ and any induced coarse median structure $(X,d,\mu)$, we consider the induced coarse interval structure $(X,d,[\cdot,\cdot]')$ of $(X,d,\mu)$. By definition for any $x,y,z\in X$, $\mu(x,z,y)$ is some point chosen from $\N_{\kappao}([x,z])\cap \N_{\kappao}([z,y])\cap \N_{\kappao}([y,x])$.  Hence $\mu(x,z,y) \in \N_{\kappao}([y,x])\subseteq \N_{2\kappao}([x,y])$, which implies $[x,y]' \subseteq \N_{2\kappao}([x,y])$. On the other hand for any $z\in [x,y]$, Remark \ref{ends close to intervals} implies $z\in \N_{\kappao}([y,x])\cap \N_{3\kappao}([z,y]) \cap \N_{3\kappao}([x,z])$. It follows that both $z$ and $\mu(x,z,y)$ lie in $\N_{\kappao}([y,x])\cap \N_{3\kappao}([z,y]) \cap \N_{3\kappao}([x,z])$. So by axiom (I3)', we have $z\thicksim_K \mu(x,z,y) \in [x,y]'$ for $K=\psi(3\kappao)>0$. Hence $[x,y]\subseteq \N_K([x,y]')$, which implies $d_H([x,y],[x,y]') \leqslant \max\{2\kappao,K\}$ for any $x,y\in X$. Therefore $[\cdot, \cdot]$ and $[\cdot, \cdot]'$ are uniformly close.
\end{proof}

\section{Rank,  generalised hyperbolicity and interval growth}\label{growth section}

\subsection{Generalised hyperbolicity for higher rank coarse median spaces}\label{generalised hyperbolicity}

Here we will provide the following characterisations of rank for a coarse median space.

\begin{thm}\label{hyper rank}
\hyperrankthm
\end{thm}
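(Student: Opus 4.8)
The plan is to prove a cycle of implications $(1)\Rightarrow(2)\Rightarrow(3)\Rightarrow(1)$, using the characterisation of rank via multi-morphisms from the median cube given in Proposition~\ref{char for high rank-final}(2), and the iterated-median estimates of Lemma~\ref{coarse iterated estimate}, Lemma~\ref{coarse iterated estimate new1} and Lemma~\ref{coarse iterated estimate new2}. The core geometric object throughout is the coarse cube: an approximate image of the median cube $I^{n+1}$ built from the iterated median $\mu(x_1,\ldots,x_{k};q)$ on subsets of a given $(n{+}1)$-tuple $x_1,\ldots,x_{n+1}$ based at $q$. The hypothesis in each condition is essentially a statement that this coarse cube degenerates: in (2) the cube face conditions $\N_\lambda([x_i,x_j])$ force the whole cube into a union of edges $[x_i,q]$; in (3) the ``thin cube'' inequality says that if all the ``$2$-faces'' $\langle x_i,x_j,p\rangle$ are close to the corner $p$ then some ``edge'' $\langle x_i,p,q\rangle$ is close to $p$.

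For $(1)\Rightarrow(2)$: assume $\rank X\le n$. Given $x_1,\ldots,x_{n+1},q$ and a point $z$ in the left-hand intersection, I would first coarsely project everything into the interval $[z',q]$ for a suitable coarse centre $z'$, then form the iterated medians $e_i := \langle x_i, z', q\rangle$ (or a variant based at an appropriate point) so that the $e_i$ lie uniformly close to a common interval and pairwise satisfy the ``$\langle e_i, \cdot, e_j\rangle \thicksim a$'' hypothesis of Proposition~\ref{char for high rank-final}(2); the hypothesis $z\in\bigcap_{i\ne j}\N_\lambda([x_i,x_j])$ is exactly what makes these pairwise-median conditions hold with a controlled constant (using (I2)-type reasoning, i.e.\ (C1) and (C2)). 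Rank $\le n$ then forces one $e_i$ to be close to the basepoint, and tracing back through the iterated-median estimates shows $z$ lies in $\N_{\psi(\lambda)}([x_i,q])$ for that $i$. The function $\psi$ is assembled from $\rho$, the constant $C(\lambda)$ of Proposition~\ref{char for high rank-final}(2), and the constants $C_n, D_n, G_n, E(k,n)$.

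For $(2)\Rightarrow(3)$: given $x_1,\ldots,x_{n+1};p,q$, set $\lambda:=\max\{d(p,\langle x_i,x_j,p\rangle):i\ne j\}$. Observe that $p$ lies within $\lambda$ of each $[x_i,x_j]$ (since $\langle x_i,x_j,p\rangle\in[x_i,x_j]$), hence $p\in\bigcap_{i\ne j}\N_\lambda([x_i,x_j])$, so (2) places $p$ in some $\N_{\psi(\lambda)}([x_i,q])$; by Lemma~\ref{estm1}-type reasoning ($d(p,\langle x_i,p,q\rangle)$ is controlled once $p\in\N_{\psi(\lambda)}([x_i,q])$, using (C1), (C2)) we get $d(p,\langle x_i,p,q\rangle)\le\varphi(\lambda)$ for a suitable $\varphi$. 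For $(3)\Rightarrow(1)$: I would verify the negation-free form of Proposition~\ref{char for high rank-final}(2), i.e.\ given $a,b$ and $e_1,\ldots,e_{n+1}\in[a,b]$ with $\langle e_i,a,e_j\rangle\thicksim_\lambda a$ for $i\ne j$, apply the thin-cubes inequality with $p:=a$, $q:=b$ and $x_i:=e_i$: the hypothesis gives $\max\{d(a,\langle e_i,e_j,a\rangle):i\ne j\}\le\lambda$, so (3) yields $\min_i d(a,\langle e_i,a,b\rangle)\le\varphi(\lambda)$; since $e_i\in[a,b]$ we have $\langle e_i,a,b\rangle\thicksim e_i$ up to a constant (again Lemma~\ref{estm1}-style, via (C1)/(C2)), hence some $e_i$ is $C(\lambda)$-close to $a$.

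The main obstacle is $(1)\Rightarrow(2)$: one must build the right coarse cube and show that the pairwise hypotheses $z\in\N_\lambda([x_i,x_j])$ translate, with constants depending only on $\lambda$ and the parameters, into the ``flat corner'' hypothesis of Proposition~\ref{char for high rank-final}(2) for the vertices $e_i$ adjacent to a chosen basepoint; the bookkeeping of which basepoint to use (it should be a coarse median of $z$ with $q$ and the $x_i$'s, or something forcing $z$ itself to be coarsely the basepoint) and verifying the estimates $\langle e_i, \text{base}, e_j\rangle\thicksim_{\lambda'} \text{base}$ is where all the iterated-median lemmas (\ref{coarse iterated estimate}, \ref{coarse iterated estimate new1}, \ref{coarse iterated estimate new2}) get used, and is the delicate part of the argument. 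The remaining implications are comparatively mechanical, amounting to choosing the right specialisation $p$, $q$, $x_i$ and invoking the basic interval estimate (Lemma~\ref{estm1} / axioms (C1), (C2)) that membership in $\N_R([u,v])$ controls $d(\cdot,\langle u,\cdot,v\rangle)$.
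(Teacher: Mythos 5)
Your arguments for $2)\Rightarrow 3)$ and $3)\Rightarrow 1)$ are correct and essentially identical to the paper's: specialising the multi-median condition at $p$ itself, and feeding $p=a$, $q=b$, $x_i=e_i$ into the thin-cubes inequality together with $\langle e_i,a,b\rangle\thicksim_{\kappaiv}e_i$ so that Proposition~\ref{char for high rank-final}(2) applies. The genuine gap is the leg you yourself flag as the main obstacle, $(1)\Rightarrow(2)$: you do not actually construct the promised coarse cube, do not fix the basepoint (``a suitable coarse centre $z'$ \ldots or a variant based at an appropriate point''), and do not verify the flat-corner estimates; you only assert that Lemmas~\ref{coarse iterated estimate}, \ref{coarse iterated estimate new1} and \ref{coarse iterated estimate new2} should make the bookkeeping work. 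Since this is the only implication that uses the rank hypothesis, the proof as written does not close the cycle.

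The missing idea is that no iterated medians, coarse centres or cube machinery are needed at all; a one-line specialisation does the job, and this is how the paper proceeds (it proves $1)\Rightarrow 3)$ directly and then recovers (2) from (3), a direction you already have). Given $x_1,\ldots,x_{n+1};p,q$, put $e_i:=\langle x_i,p,q\rangle$. These lie \emph{exactly} in $[p,q]$, and by the five-point estimate (\ref{five point estimate}) together with (C1), writing $\xi=\max_{i\neq j}d(p,\langle x_i,x_j,p\rangle)$, one gets $\langle e_i,p,e_j\rangle\thicksim_{\kappav}\langle\langle x_i,x_j,p\rangle,p,q\rangle\thicksim_{\rho(\xi)}\langle p,p,q\rangle=p$. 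Proposition~\ref{char for high rank-final}(2), applied with $a=p$ and $b=q$, then forces some $e_i$ to be $C(\rho(\xi)+\kappav)$-close to $p$, which is exactly condition 3). If you insist on going straight to (2), take the basepoint to be the intersection point $z$ itself: set $e_i:=\langle x_i,z,q\rangle$, note $e_i\in[z,q]\cap[x_i,q]$, use $z\in\N_\lambda([x_i,x_j])$ to get $\langle x_i,x_j,z\rangle\thicksim_{\rho(\lambda)+\lambda+\kappaiv}z$ and hence $\langle e_i,z,e_j\rangle$ uniformly close to $z$ as above; rank $\leqslant n$ then makes some $e_i$ close to $z$, and since $e_i\in[x_i,q]$ this is precisely $z\in\N_{\psi(\lambda)}([x_i,q])$. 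Replacing your sketched construction by this explicit choice completes the argument; the iterated-median lemmas you cite are needed elsewhere in the paper (for the cube structure theorem and the growth characterisation), not here.
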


As Bowditch showed in \cite{bowditch2013coarse}, a geodesic coarse median space has rank $1$ \emph{if and only if} it is hyperbolic, and it is instructive to consider conditions 2) and 3) above in that context. Here condition 2) reduces to a version of  the generalised slim triangles condition abstracted from classical hyperbolic geometry, while condition 3) reduces to the Gromov inequality (see Equation (\ref{gromov product generalised}) below) motivated by the geometry of trees. From this perspective, Theorem \ref{hyper rank} provides higher rank analogues of these two characterisations.

To be more precise, recall that in \cite[Theorem 4.4]{niblo2017four} we established the following result in the special case where $b$ is required to range over those points in the interval $[a,c]$. To deduce the more general result stated here, one only needs to consider the effect of replacing the general point $b$ by the coarse median $\mu(a,b,c)$ and then use the fact that the intervals $[a, \mu(a,b,c)]$ and $[\mu(a,b,c),c]$ both lie in a uniformly bounded neighbourhood of the interval $[a,c]$:
\begin{thm}[\cite{niblo2017four}]\label{hyperbolic char}
For a coarse median space $(X,d,\mu)$, the following are equivalent:
\begin{enumerate}[1)]
  \item $\rank X \leqslant 1$;
  \item There exists a constant $\delta>0$ such that for any $a,b,c \in X$, we have
  $$[a,c] \subseteq \N_\delta([a,b]) \cup \N_\delta([b,c]).$$
\end{enumerate}
\end{thm}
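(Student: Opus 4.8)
The plan is to establish the two implications separately, in both cases using the characterisation of rank $\leqslant 1$ provided by Proposition \ref{char for high rank-final} together with the coarse median axioms (C1), (C2) and the five-point estimate \eqref{five point estimate}; the interval monotonicity property (I2) of Proposition \ref{coarse interval} handles the passage between the various intervals.

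\emph{$1)\Rightarrow 2)$.} Assume $\rank X\leqslant 1$, fix $a,b,c\in X$ and a point $m\in[a,c]$, and set $p=\mu(a,b,c)$, $u=\mu(m,a,p)$, $v=\mu(m,c,p)$. Since $u=\mu(m,a,p)$ and $v=\mu(m,c,p)$ both lie in $[m,p]$ by definition of the interval, I would first verify the coarse identity $\mu(u,m,v)\thicksim_{\lambda_0}m$, where $\lambda_0$ depends only on the parameters of $X$. The point is that $m\in[a,c]$ forces $\mu(a,m,c)\thicksim m$ (a direct application of (C2)); then \eqref{five point estimate} gives $\mu(m,a,v)=\mu(m,a,\mu(m,c,p))\thicksim\mu(\mu(m,a,m),\mu(m,a,c),p)\thicksim\mu(m,m,p)=m$, and a further application of (C2) — together with $v\in[m,p]$, which by the same kind of computation yields $\mu(m,p,v)\thicksim v$ — gives $\mu(u,m,v)=\mu(\mu(a,m,p),m,v)\thicksim\mu(a,m,\mu(p,m,v))\thicksim\mu(a,m,v)\thicksim m$. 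Now Proposition \ref{char for high rank-final}(2), applied with basepoint $m$, interval $[m,p]$ and the two points $u,v$, yields a constant $C=C(\lambda_0)$ with $u\thicksim_C m$ or $v\thicksim_C m$. Finally $p=\mu(a,b,c)\in[a,b]\cap[b,c]$, so by (I2) we have $[a,p]\subseteq\N_{\f(0)}([a,b])$ and $[p,c]\subseteq\N_{\f(0)}([b,c])$; since $u\in[a,p]$ and $v\in[p,c]$, we conclude $m\in\N_\delta([a,b])\cup\N_\delta([b,c])$ for a constant $\delta$ independent of $a,b,c$ and $m$.

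\emph{$2)\Rightarrow 1)$.} Assume condition 2) with constant $\delta$; I would verify condition (2) of Proposition \ref{char for high rank-final} for $n=1$. Fix $\lambda>0$, points $a,b$, and $e_1,e_2\in[a,b]$ with $\mu(e_1,a,e_2)\thicksim_\lambda a$, and put $f=\mu(e_1,e_2,b)$. A routine coarse computation using \eqref{five point estimate} and the fact that $e_i\in[a,b]$ gives $\mu(a,f,e_i)\thicksim_{C_1(\lambda)}e_i$, so $e_1,e_2\in\N_{C_1(\lambda)}([a,f])$, and also $\mu(e_1,e_2,f)\thicksim f$. Applying hypothesis 2) to the triple $(a,e_1,f)$ gives $[a,f]\subseteq\N_\delta([a,e_1])\cup\N_\delta([e_1,f])$, hence $e_2\in\N_{C_1(\lambda)+\delta}([a,e_1])\cup\N_{C_1(\lambda)+\delta}([e_1,f])$. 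If $e_2$ is close to $[a,e_1]$ then $e_2\thicksim\mu(e_1,a,e_2)\thicksim_\lambda a$ and we are done. If instead $e_2$ is close to $[e_1,f]$ then $e_2\thicksim\mu(e_1,e_2,f)\thicksim f$; in this case I repeat the argument with the triple $(a,e_2,f)$, getting either $e_1\thicksim a$ (done) or $e_1\thicksim f\thicksim e_2$, in which case $\mu(e_1,a,e_2)\thicksim\mu(e_1,a,e_1)=e_1$ together with $\mu(e_1,a,e_2)\thicksim_\lambda a$ forces $e_1\thicksim a$. In all cases one of $e_1,e_2$ is $C$-close to $a$ for a constant $C=C(\lambda,\delta)$, so $\rank X\leqslant 1$.

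I expect the only real obstacle to be careful bookkeeping of constants: the chains of estimates in $1)\Rightarrow 2)$ (especially the identity $\mu(u,m,v)\thicksim m$) and in $2)\Rightarrow 1)$ each pass through (C1), (C2) and the five-point estimate several times, and one must be disciplined about the order of substitutions and about how the final constant depends on $\lambda$. A secondary technical point, used repeatedly, is the elementary fact that membership of a point $z$ in $\N_R([x,y])$ implies $\mu(x,y,z)\thicksim z$ up to a constant depending on $R$ and the parameters — the coarse median analogue of Lemma \ref{estm1}, which is already implicit in the proof of Proposition \ref{coarse interval}.
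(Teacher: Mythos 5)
Your argument is correct. A preliminary remark: this paper does not actually prove Theorem \ref{hyperbolic char} — it is quoted as an established result from \cite{niblo2017four} — so there is no internal proof to compare against; what can be said is that your route is essentially a rank-one instance of the strategy the paper uses for Theorem \ref{hyper rank}, whose multi-median condition with $n=1$, $q=b$ and $\lambda=0$ is precisely condition 2) here, and like that proof it runs everything through Proposition \ref{char for high rank-final}. Checking your steps: in $1)\Rightarrow 2)$ the points $u=\langle m,a,p\rangle$ and $v=\langle m,c,p\rangle$ lie in $[m,p]$ exactly (not just coarsely), the chain of estimates giving $\langle u,m,v\rangle\thicksim_{\lambda_0} m$ uses only (M1), (M2), (C1), (C2) and (\ref{five point estimate}) with $\lambda_0$ depending only on the parameters, so a single constant $C(\lambda_0)$ from Proposition \ref{char for high rank-final} suffices, and the final passage $[a,p]\subseteq\N_{\f(0)}([a,b])$, $[c,p]\subseteq\N_{\f(0)}([b,c])$ via (I2) of Proposition \ref{coarse interval} is legitimate because $p=\langle a,b,c\rangle$ lies in both $[a,b]$ and $[b,c]$ on the nose. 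In $2)\Rightarrow 1)$ the two applications of the slim-interval hypothesis, to $(a,e_1,f)$ and then $(a,e_2,f)$ with $f=\langle e_1,e_2,b\rangle$, together with the elementary fact that $z\in\N_R([x,y])$ forces $\langle x,z,y\rangle\thicksim z$ up to a constant depending on $R$ and the parameters, do exhaust all cases, and each case yields $e_1$ or $e_2$ close to $a$ with a constant depending only on $\lambda$, $\delta$ and the parameters, which is exactly what Proposition \ref{char for high rank-final}(2) demands. The only remaining work is the constant bookkeeping you already flag, which is routine.
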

We also showed in \cite[Theorem 4.2]{niblo2017four} that the intervals in a rank 1 geodesic coarse median space are uniformly close to geodesics, so Theorem \ref{hyperbolic char} is a version of the slim triangles condition for hyperbolicity. Clearly Theorem \ref{hyper rank} generalises this, providing a higher rank analogue of the slim triangles condition which holds even in the non-geodesic context.

\begin{rem}
  The closeness of geodesics and intervals is a unique (and not \emph{a priori} obvious) feature of the rank 1 case. Combining this fact with Proposition \ref{coarse interval}, we deduce that any geodesic metric space admits at most one coarse median of rank one up to uniform bound. As Zeidler showed in \cite[Example 2.2.8]{zeidler2013coarse}, this is \emph{not} true for higher rank cases (indeed, even in rank 2). The classical median on the Euclidean plane (corresponding to the Cartesian coordinates) is given by taking the interval from the origin to the point $(x,y)$ to be the rectilinear area with diagonal between these points and extending this by the translation action to an interval structure on the plane, equipped with its usual metric. Rotating the frame by an angle of $\pi/4$ one obtains a new interval structure but of course the metric does not change. To see that the new median is not equivalent to the standard one, one observes that while the points $(n,0), (0,0), (0,n)$ have standard median $(0,0)$ their median in the new structure is $(\frac{n}{\sqrt{2}}, \frac{n}{\sqrt{2}})$ which diverges to infinity.
\end{rem}

Turning now to Gromov's inner product, we recall the definition. Fixing a base point $p$ in a metric space $(X,d)$ and for $a,b\in X$, we set
$$(a|b)_p:=\frac{1}{2}[d(a,p)+d(b,p)-d(a,b)].$$

\begin{thm}[Gromov, \cite{gromov1987hyperbolic}]
A geodesic metric space $(X,d)$ is Gromov hyperbolic \emph{if and only if} there exists some constant $\delta>0$ such that the following inequality holds for any $a,b,c,p \in X$:
\begin{equation}\label{gromov inequality}
  \min\{(a|b)_p,(b|c)_p\} \leqslant (a|c)_p + \delta.
\end{equation}
\end{thm}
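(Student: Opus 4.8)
\emph{Proof proposal.} This is classical, so the plan is to reconstruct the standard argument, proving both implications by comparing the Gromov product $(a|b)_p$ with the distance from $p$ to a geodesic $[a,b]$. Two elementary remarks will be used repeatedly. First, in \emph{any} geodesic space one has $(a|b)_p\leqslant d(p,[a,b])$: for $z\in[a,b]$, $d(p,a)+d(p,b)\leqslant 2d(p,z)+d(z,a)+d(z,b)=2d(p,z)+d(a,b)$, and one infimises over $z$. Second, if $w$ lies on a geodesic $[x,y]$ then $(w|y)_x=d(x,w)$, immediately from $d(x,y)=d(x,w)+d(w,y)$.

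For the forward implication I would take ``Gromov hyperbolic'' to mean that geodesic triangles are $\delta_0$-thin (the tripod comparison), which is equivalent up to a linear change of constant to $\delta_0$-slimness. The one extra ingredient is the reverse of the first remark, $d(p,[a,c])\leqslant (a|c)_p+\delta_0$: in the geodesic triangle on $p,a,c$, the point of $[p,a]$ at distance $(p|c)_a$ from $a$ lies at distance $(a|c)_p$ from $p$ (a direct computation), and $\delta_0$-thinness makes it $\delta_0$-close to the point of $[a,c]$ at distance $(p|c)_a$ from $a$, which lies on $[a,c]$. Combining this with slimness of the triangle on $a,b,c$ and the first remark gives the chain
\[
(a|c)_p+\delta_0\ \geqslant\ d(p,[a,c])\ \geqslant\ \min\{d(p,[a,b]),\,d(p,[b,c])\}-\delta_0\ \geqslant\ \min\{(a|b)_p,(b|c)_p\}-\delta_0,
\]
i.e.\ \eqref{gromov inequality} with $\delta=2\delta_0$.

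For the converse, assume \eqref{gromov inequality} and let $x,y,z$ span a geodesic triangle, $w\in[x,z]$, $t=d(x,w)$. Since $(z|y)_x+(x|y)_z=d(x,z)$ and the rest of the argument is symmetric under $x\leftrightarrow z$, I may assume $t\leqslant (z|y)_x$; as $(z|y)_x\leqslant d(x,y)$ there is $w'\in[x,y]$ with $d(x,w')=t$. By the second remark $(w|z)_x=t$ and $(y|w')_x=t$, so two applications of \eqref{gromov inequality} give
\[
(w|y)_x\ \geqslant\ \min\{(w|z)_x,(z|y)_x\}-\delta\ =\ t-\delta,\qquad (w|w')_x\ \geqslant\ \min\{(w|y)_x,(y|w')_x\}-\delta\ \geqslant\ t-2\delta.
\]
Since $d(x,w)=d(x,w')=t$, this yields $d(w,w')=2\big(t-(w|w')_x\big)\leqslant 4\delta$, so $w\in\N_{4\delta}([x,y])$; the case $t\geqslant (z|y)_x$ gives $w\in\N_{4\delta}([y,z])$. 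Hence every geodesic triangle is $4\delta$-slim, so $X$ is hyperbolic.

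The substantive half is this converse — extracting the geometric slim-triangle condition from a purely metric four-point inequality — although the computation above, running entirely through Gromov products based at a vertex (where they coincide with arc-length parameters along the incident sides), keeps it short. The main practical nuisance will be bookkeeping the linear relations among the several standard formulations of hyperbolicity (thin, slim, tripod) and the constant $\delta$ appearing in \eqref{gromov inequality}.
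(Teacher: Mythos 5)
Your proof is correct. Both preliminary remarks are right, the estimate $d(p,[a,c])\leqslant (a|c)_p+\delta_0$ follows from thinness exactly as you say, the chain in the forward direction gives \eqref{gromov inequality} with $\delta=2\delta_0$, and in the converse the two applications of \eqref{gromov inequality} at the basepoint $x$ (first to $w,z,y$, then to $w,y,w'$) together with $d(w,w')=2\bigl(t-(w|w')_x\bigr)$ do yield $4\delta$-slim triangles; the case split via $(z|y)_x+(x|y)_z=d(x,z)$ is handled correctly.

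Note, however, that the paper does not prove this theorem at all: it is quoted from Gromov, and what the paper proves is the coarse relaxation immediately afterwards, namely that hyperbolicity already follows from $\min\{(a|b)_p,(b|c)_p\}\leqslant\varphi((a|c)_p)$ for a non-decreasing $\varphi$. There the route differs from yours in its target criterion and choice of points: instead of showing each side of a geodesic triangle lies in a neighbourhood of the union of the other two, the paper takes the three internal (equidistant subdivision) points $i_x,i_y,i_z$ of the triangle, applies the inequality at the basepoints $i_x$ and $i_z$ to bound the pairwise distances $d(i_x,i_y)$, $d(i_y,i_z)$, $d(i_x,i_z)$, and invokes Gromov's criterion that a uniform bound on the insize implies hyperbolicity. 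Your argument buys a self-contained reduction to the most familiar slim-triangle definition with an explicit constant, and for the additive hypothesis it is as short as one could want; the paper's insize computation is organised so that it survives replacing $t\mapsto t+\delta$ by an arbitrary non-decreasing $\varphi$ (at the cost of composed control functions such as $\varphi(\varphi(0)/2)$), which is precisely the generality the paper needs for its rank characterisation. Your converse would also adapt to that setting with similar compositional losses, so the difference is one of packaging rather than substance, but as written the two arguments run through genuinely different classical characterisations of hyperbolicity.
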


Note that the Gromov product is determined by the properties $(z|y)_x+(z|x)_y=d(x,y)$ and symmetry: $(z|y)_x=(y|z)_x$ for all $x,y,z$.

If $i_y$ is the intermediate point on a geodesic from $x$ to $z$ such that $d(x,i_y)=(y|z)_x$ and $d(z,i_y)=d(x,z)-d(x,i_y)=(y|x)_z$, then we have
\begin{align*}
  (y|x)_{i_y}&=1/2(d(y,i_y)+d(x,i_y)-d(y,x))\\
  &=1/2(d(y,i_y)+(z|y)_x-d(y,x))\\
  &=1/2(d(y,i_y)-(z|x)_y).
\end{align*}
Since this is symmetric in $x,z$ for such points, we obtain $(y|x)_{i_y}=(y|z)_{i_y}$.

\begin{lem}\label{4 point special case}
  A geodesic space $X$ is hyperbolic \emph{if and only if} there exists $\delta>0$ such that for all $x,y,z\in X$ and $p$ on a geodesic from $x$ to $z$, we have
  $$\min\{(y|x)_p,(y|z)_p\}\leq \delta.$$
\end{lem}

\begin{proof}
  The condition is a special case of Gromov's 4-point condition, so is implied by hyperbolicity.

  Now we consider the converse. For any $x,y,z\in X$, let $i_x,i_y,i_z$ be intermediate points on geodesics from $y$ to $z$, $x$ to $z$ and $x$ to $y$ respectively such that $d(x,i_y)=d(x,i_z)=(y|z)_x$, \emph{etc}.

  As noted above $(y|x)_{i_y}=(y|z)_{i_y}$, so by hypothesis both of these are at most $\delta$. Thus we have
  $$d(x,i_x)+d(i_x,y)\leq d(x,y)+2\delta,$$
  meaning intuitively that $i_x$ is `almost' on a geodesic from $x$ to $y$.

  On the other hand, $d(x,i_z)+d(i_z,y)=d(x,y)$ and $d(i_z,y)=(x|z)_y=d(i_x,y)$. So we have $d(x,i_x)\leq d(x,i_z)+2\delta$. We will show a fellow travelling result that bounds $d(i_x,i_z)$.

  Since $i_z$ is on the geodesic from $x$ to $y$, our hypothesis implies one of $(i_x|x)_{i_z},(i_x|y)_{i_z}$ is at most $\delta$. In other words, for some $u\in \{x,y\}$ we have
  $$d(i_x,i_z)+d(u,i_z)\leq d(i_x,u)+2\delta.$$
When $u=y$ we have $d(i_x,u)=d(u,i_z)$, while $u=x$ implies $d(i_x,u)\leq d(u,i_z)+2\delta$. Hence in either case, we have $d(i_x,i_y)\leq 4\delta$.

Interchanging the roles of $x,y,z$ we see that $\{i_x,i_y,i_z\}$ has diameter at most $4\delta$, which implies hyperbolicity by \cite[III.H.1.17(3)]{BH99}.
\end{proof}

We note that neither the Gromov 4-point condition nor our special case of this is quasi-isometry invariant, however we can give a quasi-isometrically invariant form in terms of medians as follows.  To allow for this coarsening we consider the following condition: there exists a non-decreasing function $\varphi$ such that for any $x,y,z;p\in X$, we have
\begin{equation}\label{gromov product generalised}
  \min\{(y|x)_p,(y|z)_p\} \leqslant \varphi((x|z)_p).
\end{equation}
This is \emph{a priori} weaker than the Gromov 4-point condition, but stronger than the hypothesis in Lemma \ref{4 point special case} (taking $\delta=\varphi(0)$), so in the geodesic case Inequality (\ref{gromov product generalised}) characterises hyperbolicity.

Now for a rank $1$ geodesic coarse median space $(X,d,\mu)$, there exists a constant $C>0$ such that for any $a,b,p \in X$, $(a|b)_p \thicksim_C d(p,\langle a,b,p\rangle)$. This follows directly from the fact that intervals and geodesics are uniformly close to each other \cite[Theorem 4.2]{niblo2017four}. Hence in this situation, the coarse inequality (\ref{gromov product generalised}) above can be rewritten to give the following characterisation of hyperbolicity:
\begin{equation}\label{generalised gromov inequality}
  \min\{d(p,\mu(a,b,p)),d(p,\mu(b,c,p))\} \leqslant \varphi(d(p,\mu(a,c,p))).
\end{equation}
This inequality has the virtue that it is quasi-isometry invariant. Hence the class of quasi-geodesic coarse median spaces satisfying this variant of the 4-point condition is closed under quasi-isometry, providing a natural generalisation of geodesic hyperbolic spaces.

Equation (\ref{generalised gromov inequality}) is the rank 1 case of Theorem \ref{hyper rank} (3), so this theorem  provides a higher rank generalisation of the Gromov inner product characterisation of hyperbolicity.
We now turn to the proof of the theorem.

\begin{proof}[Proof of Theorem \ref{hyper rank}]
Assume $(\rho,\kappaiv\jedit{, \kappav})$ are parameters of $(X,d,\mu)$.

\emph{$3) \Rightarrow 2)$}: For any $p \in \bigcap_{i \neq j} \N_\lambda([x_i,x_j])$ and $i \neq j$, there exists $p' \in [x_i,x_j]$ such that $p\thicksim_\lambda p'$. So we have
$$\langle x_i,p,x_j\rangle \thicksim_{\rho(\lambda)} \langle x_i,p',x_j\rangle \thicksim_{\kappaiv} p' \thicksim_\lambda p.$$
Hence from condition (3), there exists some $i=1,\ldots,n+1$ such that
$$d(p,\mu(x_i,p,q))\leqslant \varphi(\rho(\lambda)+\lambda+\kappaiv).$$
Taking $\psi(\lambda)=\varphi(\rho(\lambda)+\lambda+\kappaiv)$, we have $p \in \N_{\psi(\lambda)}([x_i,q])$ as required.

\emph{$2) \Rightarrow 3)$}: For any $p,q$ and $x_1,\ldots,x_{n+1} \in X$, take $\xi=\max\{d(p,\langle x_i,x_j,p\rangle): i\neq j\}$. Then $p\thicksim_{\xi}\langle x_i,x_j,p\rangle \in [x_i,x_j]$. By condition 2), there exists some $i=1,\ldots,n+1$ such that $p \in \N_{\psi(\xi)}([x_i,q])$, i.e., there exists some $p' \in [x_i,q]$ such that $p \thicksim_{\psi(\xi)}p'$. Hence
$$\mu(x_i,p,q) \thicksim_{\rho(\psi(\xi))} \mu(x_i,p',q) \thicksim_{\kappaiv}p' \thicksim_{\psi(\xi)} p.$$
Taking $\varphi(\xi)=\rho(\psi(\xi))+\psi(\xi)+\kappaiv$, we are done.

\emph{1) $\Rightarrow$ 3)}: Since the rank is at most $n$, Theorem \ref{char for high rank-final} implies that for any $\lambda>0$ there exists a constant $C=C(\lambda)$ such that for any $a,b\in X$ and $e_1,\ldots,e_{n+1}\in [a,b]$ with $\langle e_i,a,e_j\rangle\thicksim_\lambda a$ ($i\neq j$), one of $e_i$'s is $C$-close to $a$. Set $\xi=\max\{d(p,\langle x_i,x_j,p\rangle): i\neq j\}$, then by the coarse 4-point axiom (C2) we have:
$$\langle\langle x_i,p,q\rangle,p,\langle x_j,p,q\rangle\rangle \jedit{\thicksim_\kappav} \langle\langle x_i,x_j,p\rangle,p,q \rangle \thicksim_{\rho(\xi)}\langle p,p,q\rangle=p$$
for any $i \neq j$. Therefore we have
$$\min\{d(p,\mu(x_i,p,q)):i=1,\ldots,n+1\}\leqslant C(\rho(\xi)+\kappav).$$
Taking $\varphi(\xi)=C(\rho(\xi)+\kappav)$, we are done.

\emph{3) $\Rightarrow$ 1)}: Assume $e_1,\ldots,e_{n+1} \in [a,b]$ with $\langle e_i,a,e_j\rangle \thicksim_\lambda a$. Condition 3) implies that
$$\min\{d(a,\mu(e_i,a,b)): i=1,\ldots,n+1\} \leqslant \varphi(\lambda).$$
Since $e_i\in [a,b]$, we have $\mu(e_i,a,b)\thicksim_{\kappaiv} e_i$. Hence
$$\min\{d(a,e_i): i=1,\ldots,n+1\} \leqslant \varphi(\lambda)+\kappaiv.$$
Taking $C(\lambda)=\varphi(\lambda)+\kappaiv$, $(X,d,\mu)$ has rank at most $n$ by Theorem \ref{char for high rank-final}.
\end{proof}

This suggests a natural notion of rank for coarse interval spaces as follows.

\begin{defn}
Let $(X,d,[\cdot,\cdot])$ be a coarse interval structure. We say that \emph{the rank of $(X,d,[\cdot,\cdot])$ is at most $n$} if there exists a non-decreasing function $\psi$ such that
$$\bigcap_{i \neq j} \N_\lambda([x_i,x_j]) \subseteq \bigcup_{i=1}^{n+1} \N_{\psi(\lambda)}([x_i,q])$$
for any $\lambda>0$ and $x_1,\ldots,x_{n+1},q \in X$.
\end{defn}

Note that in the higher rank case ($n\geq 2$), the intersection on the left must be uniformly bounded by axiom (I3) and can be thought of as a generalised centroid of the points $x_1, \ldots, x_{n+1}$.
So the axiom asserts that the generalised centroid must be close to at least one of those coarse intervals.

With this definition and combining Theorem \ref{induce cm and ci}, we obtain the following:

\begin{cor}\label{rank preserving}
For a metric space, any coarse median of rank $n$ induces a coarse interval structure of rank $n$ and vice versa.
\end{cor}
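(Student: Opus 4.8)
The plan is to combine the rank-transfer between coarse medians and coarse interval structures with the characterisation of rank for coarse median spaces given in Theorem \ref{hyper rank}, using Theorem \ref{induce cm and ci} as the bridge. So first I would fix a metric space $(X,d)$ and show both directions separately.

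For the forward direction, suppose $\mu$ is a coarse median on $(X,d)$ of rank $n$, and let $\I$ be the induced coarse interval structure from Theorem \ref{induce cm and ci}(1). Since the definition of rank for a coarse interval space is literally condition 2) of Theorem \ref{hyper rank} (the multi-median condition) phrased with the interval notation, and the intervals $[x_i,x_j]$ appearing in that condition are exactly the intervals $\{\mu(x_i,x,x_j)\mid x\in X\}$ used in the definition of $\I$, the implication $\rank X \leqslant n \Rightarrow$ condition 2) of Theorem \ref{hyper rank} gives directly that $(X,d,\I)$ has rank at most $n$. The only minor care needed is that the coarse interval structure $\I$ is the one literally built from the map $(a,b)\mapsto [a,b]=\{\mu(a,x,b)\mid x\in X\}$ of Proposition \ref{coarse interval}, so that the intervals match on the nose and no extra bounded error is introduced.

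For the converse, suppose $\I$ is a coarse interval structure on $(X,d)$ of rank at most $n$, and let $\mu$ be an induced coarse median operator from Theorem \ref{induce cm and ci}(2). By Theorem \ref{coarse interval converse} (together with Lemma \ref{coarse interval close} to pass from $\I$ to a nearby structure satisfying (I1)$\sim$(I3) if necessary), $\mu$ is indeed a coarse median. Now I need to check that $\rank X \leqslant n$. The subtlety is that the intervals $[\cdot,\cdot]_\mu$ defined from $\mu$ via $\{\mu(a,x,b)\mid x\in X\}$ need not coincide with the given intervals $[\cdot,\cdot]$ of $\I$; however, they are uniformly Hausdorff close. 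Indeed $\mu(a,x,b)\in \N_{3\kappao}([a,b])$ by the choice of $\mu$ together with axiom (I2)' applied as in Remark \ref{ends close to intervals} (any $\mu(a,x,b)$ lies in an interval $[a,b]$ up to a bounded error once one uses (I2)' and the localisation), and conversely any point of $[a,b]$ is within a bounded distance of some $\mu(a,x,b)$. With this uniform Hausdorff closeness, the multi-median condition for the $\I$-intervals transfers, up to adjusting the function $\psi$ by the bounded error, to the multi-median condition for the $\mu$-intervals, which is exactly condition 2) of Theorem \ref{hyper rank}; hence $\rank X \leqslant n$.

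The main obstacle I expect is the bookkeeping in the converse: establishing that the $\mu$-intervals and the $\I$-intervals are uniformly Hausdorff close, and then tracking how the neighbourhood radii $\lambda$ and $\psi(\lambda)$ change under replacing one family of intervals by the other. None of this is conceptually hard --- it is the same kind of estimate as in Lemmas \ref{C1}--\ref{estmC} --- but it is where all the constants live. One clean way to organise it is: show $d_H([a,b]_\mu,[a,b]_\I)\leqslant C$ for a uniform $C$; deduce $\N_\lambda([x_i,x_j]_\mu)\subseteq \N_{\lambda+C}([x_i,x_j]_\I)$ and $\N_{\psi(\lambda+C)}([x_i,q]_\I)\subseteq \N_{\psi(\lambda+C)+C}([x_i,q]_\mu)$; then feed the hypothesis on $\I$ through these inclusions to get condition 2) of Theorem \ref{hyper rank} for $\mu$ with the function $\lambda\mapsto \psi(\lambda+C)+C$. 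This completes both directions and hence the corollary.
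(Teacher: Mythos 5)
Your proposal is correct and follows essentially the same route as the paper, which simply combines the new definition of rank for coarse interval structures (literally condition 2) of Theorem \ref{hyper rank}) with Theorem \ref{induce cm and ci}. The only content you add is the uniform Hausdorff closeness of the $\mu$-intervals and the $\I$-intervals in the converse direction, which the paper leaves implicit here but establishes by exactly your argument (via Remark \ref{ends close to intervals} and axiom (I3)') in the proof of Theorem \ref{cat equiv}.
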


\subsection{Cubes in coarse median spaces}\label{cubesinCMA}
In this subsection we will provide a structure theorem which describes a coarse cube in a coarse median space as a product of coarse intervals. It will play a key role in our characterisation of finite rank coarse median spaces in terms of the growth of coarse intervals.

Recall that median cubes are the fundamental building blocks for median algebras. Equipping the median $n$-cube $(I^n,\mu_n)$ with the $\ell^1$-metric $d_{\ell^1}$ makes it a coarse median space $(I^n, d_{\ell^1}, \mu_n)$.

\begin{defn}\label{coarsecubedef}
An \emph{$L$-coarse cube} of rank $n$ in a coarse median structure $(X,d,\mu)$ is an $L$-quasi-morphism $c$ from $(I^n,\mu_n)$ to $(X,d,\mu)$. An \emph{edge} in an $L$-coarse cube $c$ is a pair of points $c(\bar a), c(\bar b)$ in the image such that $\bar a, \bar b$ are adjacent vertices in the median cube. Two edges in an $L$-coarse cube $c$ are said to be \emph{parallel} if there exist parallel edges in the median cube which map to them under $c$.
\end{defn}

We will denote the origin of the median $n$-cube by $\bar{\0}$, the vertex diagonally opposite to $\bar{\0}$ by $\bar{\1}$ and the vertices adjacent to $\bar{\0}$ by $\bar{e}_1,\ldots,\bar{e}_n$. Given an $L$-coarse cube $c$, where there is no risk of confusion we will denote the images of the vertices $\bar{\0},\bar{\1},\bar{e}_1,\ldots,\bar{e}_n$ under the map $c$ by $\0, \1, e_1, \ldots, e_n$ respectively. The convention that elements of the median cube are barred while their images are not corresponds to the view that the median cube is an approximation (in the sense of Bowditch, see Definition \ref{Bowditch original def}) to the finite set of vertices $\0, \1, e_1, \ldots, e_n$.

Note that in Definition \ref{coarsecubedef} we do not impose any control on the distances between the points of the image, since we wish to allow cubes of arbitrarily large diameter. By analogy with Zeidler's result in \cite{zeidler2013coarse}, we have the following lemma, which controls the relationship between lengths of parallel edges in a coarse cube.

\begin{lem}
Given an edge $e$ of length $d$ in an $L$-coarse cube $c$, all edges parallel to $e$ in $c$ have length bounded by $\rho(d)+2L$, where $\rho$ is a control function parameter for the coarse median.
\end{lem}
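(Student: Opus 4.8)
The plan is to use the standard trick for comparing parallel edges in a coarse cube: write each edge as obtained from the other by applying the median with two fixed third coordinates, and then invoke the control function $\rho$ from axiom (C1). Concretely, suppose $e$ is the edge joining $c(\bar a)$ and $c(\bar b)$ where $\bar a, \bar b$ differ in a single coordinate, say coordinate $k$, and suppose $e'$ is a parallel edge joining $c(\bar a')$ and $c(\bar b')$, where $\bar a', \bar b'$ also differ precisely in coordinate $k$ and $\bar a, \bar b, \bar a', \bar b'$ all agree with each other outside coordinate $k$ (after possibly relabelling so that $\bar a, \bar a'$ agree in coordinate $k$ and $\bar b, \bar b'$ agree in coordinate $k$). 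In the median cube one has the identities $\bar a' = \mu_n(\bar a, \bar a', \bar b')$ and $\bar b' = \mu_n(\bar b, \bar a', \bar b')$, which is just the observation that on coordinate $k$ the majority vote among $\{\bar a, \bar a', \bar b'\}$ returns the common value of $\bar a$ and $\bar a'$, and likewise for $\bar b$; on every other coordinate all four vertices agree.

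First I would push these identities through the quasi-morphism $c$. Since $c$ is an $L$-quasi-morphism, $c(\bar a') = c(\mu_n(\bar a, \bar a', \bar b'))$ is $L$-close to $\mu(c(\bar a), c(\bar a'), c(\bar b'))$, and similarly $c(\bar b')$ is $L$-close to $\mu(c(\bar b), c(\bar a'), c(\bar b'))$. Now I estimate the length of $e'$, namely $d(c(\bar a'), c(\bar b'))$, using the triangle inequality and these two $L$-approximations:
\begin{equation*}
d(c(\bar a'), c(\bar b')) \leqslant 2L + d\big(\mu(c(\bar a), c(\bar a'), c(\bar b')),\, \mu(c(\bar b), c(\bar a'), c(\bar b'))\big).
\end{equation*}
The two median expressions on the right differ only in their first argument, with the first arguments being $c(\bar a)$ and $c(\bar b)$, at distance exactly the length $d$ of the edge $e$. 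By axiom (C1) (in the form that the map $x \mapsto \mu(x,b,c)$ is $\rho$-bornologous uniformly in $b,c$), this distance is bounded by $\rho(d)$. Combining, $d(c(\bar a'), c(\bar b')) \leqslant \rho(d) + 2L$, which is the claimed bound.

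The only mild subtlety — and the thing to get right rather than a genuine obstacle — is the bookkeeping of which vertices agree on which coordinates, i.e.\ setting up the labelling so that the two median identities in the cube hold on the nose; this is purely combinatorial and immediate once one fixes that $e$ and $e'$ are parallel edges in direction $k$ and chooses endpoints compatibly. One should also note that the argument is symmetric in $e$ and $e'$, so $e$ and $e'$ have lengths each bounded by $\rho$ of the other plus $2L$; the statement as given only asserts one direction, which is all that is needed. I expect no real difficulty here — the lemma is essentially a one-line application of (C1) once the cube identities are unwound, exactly mirroring Zeidler's argument in the CAT(0) cube complex setting.
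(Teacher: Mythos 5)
Your proof is correct and is essentially the argument the paper has in mind: it omits the proof and refers to Zeidler's Lemma 2.4.5, which is exactly the computation you give — express the endpoints of the parallel edge as medians in the cube, push through the $L$-quasi-morphism $c$, and apply (C1) to the two medians differing only in the first entry, giving $\rho(d)+2L$. One small correction to your setup: for parallel edges in $I^n$ the four vertices $\bar a,\bar b,\bar a',\bar b'$ need \emph{not} agree outside coordinate $k$ (e.g.\ opposite sides of a square), so that part of your justification is not available; however the identities $\mu_n(\bar a,\bar a',\bar b')=\bar a'$ and $\mu_n(\bar b,\bar a',\bar b')=\bar b'$ hold regardless, since on every coordinate $j\neq k$ the majority vote is already decided by $\bar a'_j=\bar b'_j$, and on coordinate $k$ by the compatible labelling — so your estimate goes through verbatim in the general case.
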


The proof is similar to that of \cite[Lemma 2.4.5]{zeidler2013coarse} and is therefore omitted. Given that there is control between the lengths of parallel edges but no control on the lengths of ``perpendicular'' edges, it may be helpful to think of a coarse cube as a coarse cuboid.

\begin{defn}
Given an interval $[a,b]$ in a coarse median structure $(X, d,\mu)$, we may define a new ternary operator on $[a,b]$ by $\langle x,y,z \rangle_{a,b}:= \langle a,\langle x,y,z\rangle,b\rangle$. By \cite[Lemma 2.22]{niblo2017four}, the triple $([a,b], d|_{[a,b]}, \mu_{a,b})$ is a coarse median structure and $\mu\thicksim_C \mu_{a,b}$, where $C$ is independent of $a,b$.
\end{defn}

Given an $L$-coarse cube $f:I^n\rightarrow X$, define the following coarse median spaces:
\[
\mathcal A:=([\0,\1], d, \mu_{\0,\1}); \quad\mathcal B:=([\0,e_1]\times \ldots \times [\0, e_n], d_{\ell^1}, \mu_{\ell^1})
\]
where $d_{\ell^1}$ denotes the $\ell^1$-product of the induced metrics on the intervals $[\0, e_i]$ and $\mu_{\ell^1}$ is defined by $\mu_{\ell^1}=\mu_{\0,e_1}\times \ldots \times \mu_{\0, e_n}$. Also define maps as follows:
$$
\left.
   \begin{array}{ccccc}
      \Phi:\mathcal A\rightarrow \mathcal B, &\quad& x & \mapsto & (\mu(\0,x,e_1), \ldots, \mu(\0,x,e_n));\\
      \Psi:\mathcal B\rightarrow \mathcal A, &\quad&(x_1, \ldots, x_n)& \mapsto & \mu(\mu(x_1,\ldots, x_n;\1),\0,\1).
   \end{array}
\right.
$$

\begin{thm}\label{productresult}
Let $(X,d,\mu)$ be a coarse median space and $f:I^n\rightarrow X$ be an $L$-coarse cube of rank $n$ in $X$. Then the map $\Phi:\mathcal A \to\mathcal B$ defined above provides a $(\rho_+,C)$-coarse median isomorphism with inverse $\Psi$ defined above, where $\rho_+(t)=Kt+H_0$ and $K,H_0,C$ depend only on $n$, $L$ and parameters of $(X,d,\mu)$.
\end{thm}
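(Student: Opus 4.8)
The plan is to show that $\Phi$ and $\Psi$ are mutually inverse coarse equivalences that are also quasi-morphisms, all with constants depending only on $n$, $L$ and the parameters of $(X,d,\mu)$. By Definition~\ref{cms isom} it suffices to establish four things: (a) $\Phi$ is a quasi-morphism $\mathcal A\to\mathcal B$; (b) $\Psi$ is a quasi-morphism $\mathcal B\to\mathcal A$; (c) $\Psi\circ\Phi\thicksim_C\id_{\mathcal A}$; (d) $\Phi\circ\Psi\thicksim_C\id_{\mathcal B}$. Bornologousness of both maps is automatic from the control function estimate (\ref{C1 est}) and the iterated median estimate Lemma~\ref{coarse iterated estimate}(1), so the real content is (a)--(d).

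For (a): the $i$-th coordinate of $\Phi$ is $x\mapsto\mu(\0,x,e_i)$, i.e.\ the composition of the map $X\to[\0,e_i]$ with the ambient median. Since each coordinate median $\mu_{\0,e_i}$ is uniformly close to $\mu$, checking that $\Phi$ is a quasi-morphism reduces to checking, coordinatewise, that $\mu(\0,\langle x,y,z\rangle_{\0,\1},e_i)$ is uniformly close to $\langle\mu(\0,x,e_i),\mu(\0,y,e_i),\mu(\0,z,e_i)\rangle$; expanding $\langle x,y,z\rangle_{\0,\1}=\langle\0,\langle x,y,z\rangle,\1\rangle$ and repeatedly applying the coarse $5$-point estimate (\ref{five point estimate}) together with (C1) and the fact that $e_i\in[\0,\1]$ (so $\mu(\0,e_i,\1)\thicksim_{\kappaiv}e_i$) does this. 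For (b): $\Psi(x_1,\dots,x_n)=\mu(\mu(x_1,\dots,x_n;\1),\0,\1)$ is the composition of the iterated median $\mathcal B\to X$ with projection into $[\0,\1]$; that the iterated median is a quasi-morphism on the $\ell^1$-product is essentially Lemma~\ref{coarse iterated estimate}(2) applied to the identity quasi-morphism, combined with Lemma~\ref{coarse iterated estimate new1} to handle the coordinatewise median of the product, and Lemma~\ref{coarse iterated estimate}(3),(4) to push the outer $\langle\cdot,\0,\1\rangle$ through.

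The heart of the argument, and where I expect the main obstacle, is (c) and (d) — showing the two composites are coarsely the identity. For (c), given $x\in[\0,\1]$ we must show
$$\mu\big(\mu(\mu(\0,x,e_1),\dots,\mu(\0,x,e_n);\1),\0,\1\big)\thicksim_C x.$$
The natural approach is to work in the median $n$-cube model: since $f$ is an $L$-quasi-morphism, every expression built from $\0,\1,e_1,\dots,e_n$ using bounded-depth median words is, up to an error controlled by $n$ and $L$ (via Lemma~\ref{coarse iterated estimate}(2)), the image of the corresponding median-cube word in $\bar\0,\bar\1,\bar e_1,\dots,\bar e_n$. One then verifies the identity \emph{exactly} in $I^n$ — where $\langle\bar\0,\bar x,\bar e_i\rangle$ is the projection of $\bar x$ onto the $i$-th coordinate, the iterated median reassembles these coordinates, and $\langle\cdot,\bar\0,\bar\1\rangle$ is the identity on the interval $[\bar\0,\bar\1]=I^n$ — and transports the conclusion back. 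The subtlety is that $x\in[\0,\1]$ need not be close to the $f$-image of any vertex, so one cannot literally reduce to cube vertices; instead one must first replace $x$ by $\mu(\0,x,\1)$ (harmless since $x\in[\0,\1]$) and then use the iterated-median estimates of Lemma~\ref{coarse iterated estimate new1} and especially Lemma~\ref{coarse iterated estimate new2} (the ``idempotence'' $\mu(a_1,\dots,a_n;\mu(a_1,\dots,a_n;b))\thicksim_{E_n}\mu(a_1,\dots,a_n;b)$) to collapse the nested medians, together with Lemma~\ref{coarse iterated estimate}(3),(4) to commute medians past the coordinate projections. Direction (d) is analogous but carried out coordinatewise in $\mathcal B$: for $(x_1,\dots,x_n)$ with each $x_i\in[\0,e_i]$, one shows $\mu(\0,\Psi(x_1,\dots,x_n),e_i)\thicksim_C x_i$, again by expanding $\Psi$, using that $x_j\in[\0,e_j]$ forces $\mu(\0,x_j,e_i)$ to behave like $\0$ for $j\neq i$ (since $e_i,e_j$ span a coarse square on which the relevant coordinate of $e_j$ is $0$), and applying the iterated-median collapse estimates. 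Once (a)--(d) are in hand, Remark~\ref{dep of para for coarse median iso} guarantees that $\Psi$, being a coarse inverse of the coarse median isomorphism $\Phi$, is automatically a quasi-morphism with the claimed dependence of constants, so the bookkeeping on constants closes up.
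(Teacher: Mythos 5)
Your outline follows essentially the same route as the paper: bornologousness of $\Phi$ and $\Psi$ from (C1) and Lemma \ref{coarse iterated estimate}(1), a coordinatewise quasi-morphism check for $\Phi$ via the coarse $5$-point estimate, the composites handled by transporting the exact cube identities (e.g.\ $\langle\bar e_1,\ldots,\bar e_n;\bar\1\rangle=\bar\1$, $\langle\bar\0,\bar e_i,\bar e_j\rangle=\bar\0$) through the $L$-quasi-morphism $f$ via Lemma \ref{coarse iterated estimate}(2) and then collapsing with the distributivity estimates Lemma \ref{coarse iterated estimate}(3),(4), and finally Remark \ref{dep of para for coarse median iso} to get the quasi-morphism property of $\Psi$ for free. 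The only small correction: $e_i$ need not lie in $[\0,\1]$, so the estimate $\mu(\0,\1,e_i)\thicksim e_i$ (and likewise $\mu(\0,e_i,e_j)\thicksim\0$) should be justified by the $L$-quasi-morphism property of $f$ with constant $L$ rather than by interval membership with constant $\kappaiv$ — exactly as the paper does — but this does not change the argument.
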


\begin{proof}
Assume $\rho,\kappaiv,\kappav$ are parameters of $(X,d,\mu)$. First we show that $\Phi,\Psi$ are bornologous. By axiom (C1), for any $x,y \in [\0,\1]$ we have:
\begin{eqnarray*}
  d_{\ell^1}(\Phi(x),\Phi(y)) = \sum_{k=1}^n d(\mu(\0,x,e_k),\langle\0,y,e_k\rangle) \leqslant \sum_{k=1}^n \rho(d(x,y))
   = n\rho(d(x,y)),
\end{eqnarray*}
which implies $\Phi$ is $(n\rho)$-bornologous. On the other hand, for any $\vec{x}=(x_1, \ldots, x_n)$ and $\vec{y}=(y_1, \ldots, y_n) \in [\0,e_1]\times \ldots \times [\0, e_n]$, axiom (C1) implies:
\begin{eqnarray*}
  d(\Psi(\vec{x}),\Psi(\vec{y})) &=& d( \mu(\mu(x_1,\ldots, x_n;\1),\0,\1), \langle\langle y_1,\ldots, y_n;\1\rangle,\0,\1\rangle ) \\
   &\leqslant& \rho(d(\mu(x_1,\ldots, x_n;\1),\langle y_1,\ldots, y_n;\1\rangle))\\
   &\leqslant& \rho\circ \rho_n(\sum_{k=1}^n d(x_k,y_k)).
\end{eqnarray*}
Here the last inequality follows from the control over iterated coarse medians provided by Lemma \ref{coarse iterated estimate}(1). This implies $\Psi$ is $(\rho\circ \rho_n)$-bornologous. Since $\rho$ and $\rho_n$ are both affine so is the function $\rho_+:= n\rho+\rho\circ\rho_n$.

Next we show that $\Phi$ is a quasi-morphism. For $x,y,z \in [\0,\1]$, $\mu(x,\0,\1)\thicksim_\kappaiv x$ and $\mu(y,\0,\1)\thicksim_\kappaiv y$. So by axiom (C1) and the estimate (\ref{five point estimate}), we have
$$\langle\langle x,y,z\rangle,\0,\1\rangle \thicksim_{\kappav}\langle\langle x,\0,\1\rangle,\langle y,\0,\1\rangle,z\rangle \thicksim_{\rho(2\kappaiv)}\langle x,y,z\rangle.$$
Applying the same argument again, denoting the projection from $[\0,e_1]\times \ldots \times [\0, e_n]$ onto the $i$-th coordinate by $pr_i$, we have:
\begin{eqnarray*}
&&pr_i\circ \Phi (\langle x,y,z \rangle_{\0,\1}) = \langle\0, \langle\langle x,y,z\rangle,\0,\1\rangle, e_i \rangle \thicksim_{\rho(\rho(2\kappaiv)+\kappav)} \langle \0,\langle x,y,z\rangle,e_i\rangle \\
&\thicksim_\kappaiv& \langle \0, \langle\0,\langle x,y,z\rangle,e_i\rangle, e_i \rangle \thicksim_{\rho(\kappav)} \langle \0, \langle\langle\0,x,e_i\rangle,\langle\0,y,e_i\rangle,z\rangle, e_i \rangle  \\
&\thicksim_\kappav& \langle\langle\0,x,e_i\rangle, \langle\0,\langle\0,y,e_i\rangle,e_i\rangle, \langle\0,z,e_i\rangle\rangle
\thicksim_{\rho(\kappaiv)} \langle\langle\0,\langle\0,x,e_i\rangle,e_i\rangle, \langle\0,\langle\0,y,e_i\rangle,e_i\rangle, \langle\0,z,e_i\rangle\rangle\\
&\thicksim_{\kappav}& \langle\0, \langle\langle \0,x,e_i\rangle, \langle\0,y,e_i\rangle, \langle\0,z,e_i\rangle \rangle, e_i \rangle
=pr_i(\langle\Phi(x),\Phi(y),\Phi(z)\rangle_{\ell^1}).
\end{eqnarray*}
Hence $\Phi$ is a $C'$-quasi-morphism for $C'=n[\rho(\rho(2\kappaiv)+\kappav)+\rho(\kappaiv)+\rho(\kappav)+\kappaiv+2\kappav]$.

Note that in the canonical cube $I^n$, the iterated median $\langle\bar{e}_1, \ldots, \bar{e}_n;\bar{\1}\rangle_n=\bar{\1}$. It follows that by Lemma \ref{coarse iterated estimate}(2), there exists a constant $H_n(L)$ such that
$$\mu(e_1, \ldots,e_n;\1)=\langle f(\bar{e}_1), \ldots, f(\bar{e}_n);f(\bar{\1})\rangle\thicksim_{H_n(L)}f(\langle\bar{e}_1, \ldots, \bar{e}_n;\bar{\1}\rangle_n)=f(\bar{\1})=\1.$$
Now by Lemma \ref{coarse iterated estimate}(3), there is a constant $C_n$ such that for any $x \in [\0,\1]$ we have
\begin{eqnarray*}
  \Psi \circ \Phi(x) &=& \mu(\mu(\mu(\0,x,e_1), \ldots, \mu(\0,x,e_n);\1),\0,\1)\\
   &\thicksim_{\rho(C_n)}&\mu(\mu(\0,x,\mu(e_1,\ldots, e_n;\1)),\0,\1) \thicksim_{\rho^2(H_n(L))}\mu(\mu(\0,x,\1),\0,\1)\\
&\thicksim_{\kappaiv}& \mu(x,\0,\1)\thicksim_{\kappaiv} x.
\end{eqnarray*}
Hence $ \Psi \circ \Phi$ is $C''$-close to the identity on $\mathcal A$ for $C'':=\rho^2(H_n(L))+\rho(C_n)+2\kappaiv$.

Since $f$ is an $L$-coarse median morphism, we have $\mu(\0,\1,e_i)\thicksim_{L} e_i$ and $\langle\0,e_i, e_j\rangle\thicksim_L \0$ for $i\neq j$. For any $\vec{x}=(x_1, \ldots, x_n) \in [\0,e_1]\times \ldots \times [\0, e_n]$, we have:
\begin{eqnarray*}
pr_i \circ \Phi \circ \Psi (\vec{x}) &=& \mu(\0,\mu(\mu(x_1,\ldots,x_n;\1),\0,\1),e_i )
\thicksim_{\kappaiv} \mu(\0,\mu(x_1,\ldots,x_n;\1),\mu(\0,\1,e_i))\\
&\thicksim_{\rho(L)}& \mu(\0,\mu(x_1,\ldots,x_n;\1),e_i)
\thicksim_{C_n} \mu(\mu(\0,e_i,x_1), \ldots, \mu(\0,e_i,x_n);\1 ),
\end{eqnarray*}
where the final estimate follows from Lemma \ref{coarse iterated estimate}(3). Since $x_i\in [\0,e_i]$, we have $\mu(\0,x_i,e_i)\thicksim_\kappaiv x_i$; while for $j\not=i$, we have
\[
\langle\0,e_i, x_j\rangle\thicksim_{\rho(\kappaiv)} \langle\0,e_i, \langle\0,x_j,e_j\rangle\rangle\thicksim_{\kappaiv} \langle\0,\langle e_i, \0, e_j\rangle,x_j\rangle\thicksim_{\rho(L)} \0.
\]
Hence applying Lemma \ref{coarse iterated estimate}(1)  we obtain that
\[
\mu(\mu(\0,e_i,x_1), \ldots, \mu(\0,e_i,x_n);\1 )\thicksim_{C'''} \langle\underbrace{\0, \ldots, \0}_{i-1}, x_i, \underbrace{\0,\ldots ,\0}_{n-i};\1\rangle =\langle x_i,\underbrace{\0,\ldots ,\0}_{m};\1\rangle,
\]
where if $i = 1$ then trivially $m=n-1$ and otherwise $m=n-i+1$. Here $C''':=\rho_n((n-1)(\rho(\kappaiv)+\kappaiv+\rho(L))+\kappaiv)$. Since all of these iterated medians lie in $[\0,\1]$, the cost of removing the last zero is $\kappaiv$. Hence at worst (removing $(n-2)$ zeros) we have:
\begin{eqnarray*}
\jedit{\langle x_i,\underbrace{\0,\ldots ,\0}_{n-1};\1\rangle}&\thicksim_{(n-2)\kappaiv}&\mu(x_i,\0,\1)
\thicksim_{\rho(L)}\mu(\0, \mu(\0,x_i, e_i),\1)\\
&\thicksim_\kappaiv& \mu(\0,\mu(\0,e_i,\1),x_i)\thicksim_{\rho(L)} \mu(\0, e_i, x_i)\thicksim_\kappaiv  x_i.
\end{eqnarray*}
Combining them together, we obtain that $\Phi \circ \Psi$ is $[n(3\rho(L)+(n+1)\kappaiv+C_n+C''')]$-close to the identity on $\mathcal B$.

To sum up, taking
$$C=\max\{C',C'',n(3\rho(L)+(n+1)\kappaiv+C_n+C''')\},$$
we have proved that both $\Phi$ and $\Psi$ are $\rho_+$-bornologous, $\Phi$ is a $C$-quasi-morphism and $\Phi\circ \Psi\thicksim_C id_{\mathcal B}, \Psi\circ \Phi\thicksim_C id_{\mathcal A}$. Hence by definition, $\Phi$ is a $(\rho_+,C)$-coarse median isomorphism with inverse $\Psi$.
\end{proof}

The above theorem suggests that we may regard the space $\mathcal A$ as a coarse cube (or, at least, cuboid) in our coarse median space. We now consider a natural family of subspaces, regarded as subcubes of $\mathcal A$. Given points $x_i \in [\0,e_i]$ and taking $x:=\Psi((x_1,\ldots,x_n))$ in $[\0,\1]$, we consider the following coarse median spaces:
\[
\mathcal A':=([\0,x], d, \mu_{0,x}); \quad\mathcal B':=([\0,x_1]\times \ldots \times [\0, x_n], d_{\ell^1}, \mu_{\ell^1}')
\]
where $d_{\ell^1}$ denotes the $\ell^1$-product of the induced metrics on the intervals $[\0, x_i]$ and $\mu_{\ell^1}'$ is defined by $\mu_{\ell^1}'=\mu_{\0,x_1}\times \ldots \times \mu_{\0, x_n}$. Also define maps as follows:
$$
\left.
   \begin{array}{ccccc}
      \Phi':\mathcal A'\rightarrow \mathcal B', &\quad& y & \mapsto & (\mu(\0,y,x_1), \ldots, \mu(\0,y,x_n));\\
      \Psi':\mathcal B'\rightarrow \mathcal A', &\quad&(y_1, \ldots, y_n)& \mapsto & \mu(\mu(y_1,\ldots, y_n;x),\0,x).
   \end{array}
\right.
$$

\begin{cor}\label{subcubes}
Let $(X,d,\mu)$ be a coarse median space and $f:I^n\rightarrow X$ be an $L$-coarse cube of rank $n$ in $X$. Then the map $\Phi':\mathcal A' \to\mathcal B'$ defined above provides a $(\rho'_+,C')$-coarse median isomorphism with inverse $\Psi'$, where $\rho'_+(t)=K't+H'_0$ and $K',H'_0,C'$ depend only on $n$, $L$ and parameters of $(X,d,\mu)$.
\end{cor}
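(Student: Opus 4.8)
The plan is to realise the data $(\0,x,x_1,\ldots,x_n)$ as the tuple of vertex images of a single $L'$-coarse cube $g\colon I^n\to X$ — with $L'$ controlled by $n$, $L$ and the parameters of $X$ — that sends $\bar\0\mapsto\0$, $\bar e_i\mapsto x_i$ and $\bar\1\mapsto x$, and then to apply Theorem \ref{productresult} to $g$ as a black box. Once such a $g$ is available, the coarse median spaces $\mathcal A,\mathcal B$ and the maps $\Phi,\Psi$ that Theorem \ref{productresult} produces out of $g$ are, after substituting $x$ for $\1$ and $x_i$ for $e_i$ in the defining formulas, \emph{literally} $\mathcal A',\mathcal B',\Phi',\Psi'$; so the corollary drops out at once, and $\rho'_+,C'$ inherit the dependence of the constants in Theorem \ref{productresult} on $n$, $L'$ and the parameters of $X$.

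To build $g$, I first define $\bar g\colon I^n\to\mathcal B=[\0,e_1]\times\cdots\times[\0,e_n]$ by coordinatewise rescaling: a vertex $\bar v=(v_1,\ldots,v_n)$ is sent to the tuple whose $i$-th entry is $x_i$ if $v_i=1$ and $\0$ if $v_i=0$. Since each coordinate of a vertex of $I^n$ equals the majority of the corresponding coordinates of any three vertices, axiom (M1) shows $\bar g$ is a $\kappaiv$-quasi-morphism $(I^n,d_{\ell^1},\mu_n)\to\mathcal B$. Now set $g:=\Psi\circ\bar g$, where $\Psi\colon\mathcal B\to\mathcal A$ is the map of Theorem \ref{productresult}; by that theorem and Remark \ref{dep of para for coarse median iso}, $\Psi$ is bornologous and a quasi-morphism, both with constants controlled by $n$, $L$ and the parameters of $X$, so $g$ is an $L'$-quasi-morphism into $\mathcal A\subseteq X$, hence an $L'$-coarse cube in $X$ (using $\mu_{\0,\1}\thicksim\mu$ on $[\0,\1]$), with $L'$ so controlled.

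It then remains to identify $g$ on the relevant vertices. Directly from the definitions, $g(\bar\1)=\Psi(x_1,\ldots,x_n)=x$, and $g(\bar\0)=\langle\langle\0,\ldots,\0;\1\rangle,\0,\1\rangle=\langle\0,\0,\1\rangle=\0$ by (M1). For an edge at the origin: since $f$ is an $L$-coarse cube we have $e_i\thicksim_L\langle\0,\1,e_i\rangle$, so $e_i\in\N_L([\0,\1])$ and hence $x_i\in[\0,e_i]\subseteq\N_{C_1}([\0,\1])$ for some $C_1$ depending only on $L$ and the parameters, by Proposition \ref{coarse interval}(I2); combined with (C2), this makes $x_i':=\langle\0,x_i,\1\rangle\in[\0,\1]$ boundedly close to $x_i$. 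A routine (C1)/(C2)-computation — mirroring the one in the proof of Theorem \ref{productresult} that shows $\langle\0,e_j,x_i\rangle\thicksim\0$ for $j\neq i$ — then gives $\Phi(x_i')\thicksim\bar g(\bar e_i)=(\0,\ldots,x_i,\ldots,\0)$, so that $g(\bar e_i)=\Psi(\bar g(\bar e_i))\thicksim\Psi\Phi(x_i')\thicksim x_i'\thicksim x_i$, using $\Psi\circ\Phi\thicksim\id_{\mathcal A}$ and that $\Psi$ is bornologous, all implied constants being controlled by $n$, $L$ and the parameters. Redefining $g$ at the $n$ vertices $\bar e_1,\ldots,\bar e_n$ to take the values $x_1,\ldots,x_n$ exactly perturbs $g$ by a bounded amount, hence leaves it an $L''$-coarse cube (with $L''$ still controlled) that realises $(\0,x,x_1,\ldots,x_n)$ on the nose; applying Theorem \ref{productresult} to this $g$ then finishes the proof.

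The hard part will be this last step — bounding $d(g(\bar e_i),x_i)$ by a quantity depending only on $n$, $L$ and the parameters of $X$ — which rests on two standard consequences of the coarse median axioms and the iterated-median estimates recalled in Section \ref{preliminaries}: that the interval $[\0,\1]$ is coarsely closed under $\mu$, and that a point lying in a bounded neighbourhood of $[\0,\1]$ is coarsely fixed by $y\mapsto\langle\0,\1,y\rangle$. Everything else is a pure invocation of Theorem \ref{productresult}: we prove nothing new about $\Phi'$ or $\Psi'$ directly, we simply recognise the subcube data as an honest coarse cube and let Theorem \ref{productresult} do the work.
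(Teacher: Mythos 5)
Your argument is correct, but it takes a genuinely different route from the paper. The paper proves the corollary by re-running the proof of Theorem \ref{productresult} for $\Phi'$ and $\Psi'$ directly: bornologousness and the quasi-morphism property are quoted as identical to the earlier argument, and the two compositions are shown to be close to the identities using $x_i\thicksim_C\mu(\0,x,e_i)$ (extracted from $\Phi\circ\Psi\thicksim_C\id_{\mathcal B}$), $\mu(x_1,\ldots,x_n;x)\thicksim x$, and $\langle x_i,\0,x_j\rangle\thicksim\0$. You instead promote the subcube data to an honest coarse cube: the coordinatewise map $\bar g$ into $\mathcal B$ followed by $\Psi$ is a quasi-morphism $g:I^n\to X$ with $g(\bar\0)=\0$ and $g(\bar\1)=\Psi(x_1,\ldots,x_n)=x$ exactly, and $g(\bar e_i)$ boundedly close to $x_i$; after a bounded correction at the $\bar e_i$, Theorem \ref{productresult} applied to this cube is literally the statement of the corollary, and the constants remain controlled because the new cube constant depends only on $n$, $L$ and the parameters. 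This works: the one step you flag as hard, bounding $d(g(\bar e_i),x_i)$, indeed follows from the estimates $\mu(x_i,\0,\1)\thicksim x_i$ and $\langle\0,e_j,x_i\rangle\thicksim\0$ for $j\neq i$, which already appear verbatim inside the paper's proof of Theorem \ref{productresult}, combined with Lemma \ref{coarse iterated estimate new1} (or your route via $\Phi(x_i')$ and $\Psi\circ\Phi\thicksim\id_{\mathcal A}$). What your approach buys is conceptual economy: Theorem \ref{productresult} is a black box and none of the $\Phi'$, $\Psi'$ estimates are redone, at the cost of constructing and adjusting the auxiliary cube; the paper's approach avoids the auxiliary construction and yields explicit constants, at the price of repeating a page of (C1)/(C2) computations. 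Two minor imprecisions, neither a gap: $\bar g$ is an $n\kappaiv$-quasi-morphism rather than a $\kappaiv$-one, and the per-coordinate estimate $\mu_{\0,e_i}(p,p,q)=\langle\0,p,e_i\rangle\thicksim p$ uses $x_i\in[\0,e_i]$ together with the coarse $4$-point condition, not axiom (M1) alone.
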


\begin{proof}
It follows from the same arguments in the first part of the proof of Theorem \ref{productresult} that $\Phi',\Psi'$ are $\rho_+$-bornologous and $\Phi'$ is a $C$-coarse median morphism for the same constants $\rho_+,C$ as in Theorem \ref{productresult}. It suffices to prove that $\Psi' \circ \Phi'$ and $\Phi' \circ \Psi'$ are close to the corresponding identities.

$\bullet$~Recall that for $\Phi$ and $\Psi$, the map $\Phi \circ \Psi$ is $C$-close to the identity. So we have
$$(x_1,\ldots,x_n)\thicksim_C \Phi\circ \Psi((x_1,\ldots,x_n))=\Phi(x)=(\mu(\0,x,e_1),\ldots,\mu(\0,x,e_n)),$$
which implies that $x_i \thicksim_C \mu(\0,x,e_i)$ for each $i$. As showed in the proof of Theorem \ref{productresult}, we have $\mu(e_1, \ldots,e_n;\1)\thicksim_{H_n(L)} \1$. Combining them together with parts (1), (2) and (4) of Lemma \ref{coarse iterated estimate}, we obtain that
\begin{eqnarray*}
\mu(x_1,\ldots,x_n;x) &\thicksim_{\rho_n(nC+\kappaiv)}& \mu(\mu(\0,x,e_1),\ldots,\mu(\0,x,e_n);\mu(\0,x,\1))\\
&\thicksim_{D_n}& \mu(\0,x,\mu(e_1,\ldots,e_n;\1)) \thicksim_{\rho(H_n(L))}\mu(\0,x,\1)\thicksim_{\kappaiv}x,
\end{eqnarray*}
i.e., $\mu(x_1,\ldots,x_n;x) \thicksim_{\alpha_n(L)}x$ for $\alpha_n(L):=\rho(H_n(L))+\rho_n(nC+\kappaiv)+D_n+\kappaiv$.
Now for any $y \in [\0,x]$, we have:
\begin{eqnarray*}
  \Psi' \circ \Phi'(y) &=& \mu(\mu(\mu(\0,y,x_1), \ldots, \mu(\0,y,x_n);x),\0,x)\\
   &\thicksim_{\rho(C_n)}&\mu(\mu(\0,y,\mu(x_1,\ldots, x_n;x)),\0,x)
   \thicksim_{\rho^2(\alpha_n(L))}\mu(\mu(\0,y,x),\0,x) \thicksim_{2\kappaiv} y.
\end{eqnarray*}
Hence $ \Psi' \circ \Phi'$ is $C''$-close to $\mathrm{Id}_{\mathcal A'}$ for $C'':=\rho^2(\alpha_n(L))+\rho(C_n)+2\kappaiv$.

$\bullet$~For the other direction, $x_i \thicksim_C \mu(\0,x,e_i)$ implies:
$$\mu(\0,x_i,x)\thicksim_{\rho(C)}\mu(\0,\mu(\0,x,e_i),x)\thicksim_\kappaiv \mu(\0,x,e_i)\thicksim_C x_i.$$
Hence for any $\vec{y}=(y_1, \ldots, y_n) \in [\0,x_1]\times \ldots \times [\0, x_n]$, we have
\begin{eqnarray*}
pr_i \circ \Phi' \circ \Psi' (\vec{y}) &=& \mu(\0,\mu(\mu(y_1,\ldots,y_n;x),\0,x),x_i )\\
&\thicksim_{\kappaiv}& \mu(\0,\mu(y_1,\ldots,y_n;x),\mu(\0,x,x_i))\thicksim_{\rho(\rho(C)+C+\kappaiv)} \mu(\0,\mu(y_1,\ldots,y_n;x),x_i)\\
&\thicksim_{C_n}& \mu(\mu(\0,x_i,y_1), \ldots, \mu(\0,x_i,y_n);x ),
\end{eqnarray*}
where the final estimate follows from Lemma \ref{coarse iterated estimate}(3).

On the other hand, since $\langle e_i,\0,e_j\rangle \thicksim_L \0$ for $i\neq j$, we have
$$\langle x_i,\0,e_j\rangle\thicksim_{\rho(\kappaiv)}\langle\langle\0,x_i,e_i\rangle,\0,e_j\rangle\thicksim_{\kappaiv}\langle\0,x_i,\langle e_i,\0,e_j\rangle\rangle\thicksim_{\rho(L)}\langle\0,x_i,\0\rangle=\0.$$
This implies that
\begin{eqnarray*}
\langle x_i,\0,x_j\rangle\thicksim_{\rho(\kappaiv)}\langle x_i,\0,\langle\0,x_j,e_j\rangle\rangle
\thicksim_\kappaiv \langle\0,x_j,\langle x_i,\0,e_j\rangle\rangle\thicksim_{\rho(\rho(L)+\rho(\kappaiv)+\kappaiv)}\langle\0,x_j,\0\rangle=\0.
\end{eqnarray*}
In other words, $\langle x_i,\0,x_j\rangle \thicksim_{\beta_n(L)}\0$ for $\beta_n(L):=\rho(\rho(L)+\rho(\kappaiv)+\kappaiv)+\rho(\kappaiv)+\kappaiv$. Notice that $\mu(\0,y_i,x_i)\thicksim_\kappaiv y_i$, so for $j\not= i$ we have
$$\langle\0,x_i, y_j\rangle\thicksim_{\rho(\kappaiv)} \langle\0,x_i, \langle\0,y_j,x_j\rangle\rangle\thicksim_{\kappaiv} \langle\0,\langle x_i, \0, x_j\rangle,y_j\rangle\thicksim_{\rho(\beta_n(L))} \0.$$
Now using the same arguments as in the proof of Theorem \ref{productresult}, we obtain that for the constant
$$C''':=\rho_n((n-1)\rho(\beta_n(L))+(n-1)\rho(\kappaiv)+n\kappaiv),$$
we have
\begin{eqnarray*}
\mu(\mu(\0,x_i,y_1), \ldots, \mu(\0,x_i,y_n);x )\thicksim_{C'''} \mu(\0, \ldots, \0, y_i, \0,\ldots ,\0;x)\thicksim_{(n-2)\kappaiv}\mu(\0,y_i,x)\\
\thicksim_{\rho(\kappaiv)}\mu(\0, \mu(\0,y_i, x_i),x)\thicksim_\kappaiv \mu(\0,\mu(\0,x_i,x),y_i)\thicksim_{\rho(\rho(C)+C+\kappaiv)} \mu(\0, x_i, y_i)\thicksim_\kappaiv  y_i.
\end{eqnarray*}
Therefore $\Phi' \circ \Psi'$ is $D'$-close to $\mathrm{Id}_{\mathcal B'}$ for
$$D':=n[C'''+2\rho(\rho(C)+C+\kappaiv)+\rho(\kappaiv)+(n+1)\kappaiv+C_n].$$
Finally setting $\rho_+'=\rho_+$ and $C'=\max\{C,C'',nD'\}$, we finish the proof.
\end{proof}

\subsection{Rank and coarse interval growth}\label{growth}
In this subsection, we will give a characterisation of rank in terms of interval growth as a converse to a result of Bowditch from \cite{bowditch2014embedding}.

First we notice that the cardinality of intervals can always be bounded in terms of the distance between its endpoints in the context of bounded geometry coarse median spaces:

\begin{lem}\label{finiteness}
Let $(X,d,\mu)$ be a coarse median space with parameters $(\rho, \kappaiv, \kappav)$. If $a,b\in X$ with $d(a,b) \leqslant r$, then $[a,b] \subseteq B(a,\rho(r))$. If in addition $(X,d)$ has bounded geometry, then there exists a constant $C(r)$ such that $\card [a,b] \leqslant C(r)$.
\end{lem}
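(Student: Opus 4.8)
The plan is to prove the two assertions in order, since the second uses the first. For the containment $[a,b]\subseteq B(a,\rho(r))$, take an arbitrary point $\mu(a,x,b)\in[a,b]$ and compare it with $\mu(a,a,b)$. By axiom (M1) we have $\mu(a,a,b)=a$, and by axiom (C1) (in the strengthened form \eqref{C1 est}, with the monotone control function $\rho$) we get
\[
d\big(\mu(a,x,b),a\big)=d\big(\mu(a,x,b),\mu(a,a,b)\big)\leqslant \rho\big(d(a,a)+d(x,x)+d(b,b)\big)
\]
which is not quite what is wanted; instead I would apply (C1) in its original one-variable form, varying only the middle coordinate: $d(\mu(a,x,b),\mu(a,a,b))\leqslant \rho(d(x,a))$ is still not uniform in $x$. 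The correct move is to vary the \emph{first} coordinate: write $\mu(a,x,b)$ and compare with $\mu(b,x,b)=b$ — no. Rather, the clean argument is: $d(\mu(a,x,b),\mu(b,x,b))\leqslant\rho(d(a,b))\leqslant\rho(r)$ by (C1) applied to the first coordinate, and $\mu(b,x,b)=b$ by (M1)+(M2); this gives $[a,b]\subseteq B(b,\rho(r))$, and by symmetry (swapping the roles of $a$ and $b$, using (M2)) also $[a,b]\subseteq B(a,\rho(r))$, as claimed. (Here I am using that $\rho$ may be taken non-decreasing, as arranged after \eqref{C1 est}.)

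For the second assertion, suppose in addition that $(X,d)$ has bounded geometry. By definition of bounded geometry applied with radius $\rho(r)$, there is a constant $n=n(\rho(r))\in\mathbb N$ such that $\card~ B(x,\rho(r))\leqslant n$ for every $x\in X$. Combining this with the containment $[a,b]\subseteq B(a,\rho(r))$ just established, we get $\card~[a,b]\leqslant n$, so we may take $C(r):=n(\rho(r))$, which depends only on $r$ and the parameters of $(X,d,\mu)$ together with the bounded geometry function of $(X,d)$.

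There is essentially no obstacle here: both steps are immediate once one applies axiom (C1) to the right coordinate. The only point requiring a moment's care is ensuring the estimate on $d(\mu(a,x,b),b)$ is genuinely uniform in $x\in X$ — this is why one varies an endpoint coordinate rather than the middle coordinate, so that the perturbation $d(a,b)$ does not involve $x$ at all. Everything else is a direct invocation of the definitions of coarse median operator and of bounded geometry.
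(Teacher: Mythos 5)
Your argument is correct and is essentially the paper's proof: the paper simply compares $\mu(a,b,x)$ directly with $\mu(a,a,x)=a$ via (C1) to get $[a,b]\subseteq B(a,\rho(r))$ in one step, whereas you land at $b$ first and then invoke the symmetry $[a,b]=[b,a]$ — a trivial variant of the same idea. The bounded geometry step is identical in both.
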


\begin{proof}
For any $c\in [a,b]$, there exists some $x\in X$ such that $c=\mu(a,b,x)$. Now by axiom (C1), we have
$$c=\mu(a,b,x)\thicksim_{\rho(r)}\mu(a,a,x)=a,$$
which implies $c\in B(a,\rho(r))$. The second statement follows directly by the definition of bounded geometry.
\end{proof}

For the remainder of this section, we will specialise to the context of \emph{uniformly discrete quasi-geodesic} coarse median spaces with bounded geometry. 
Recall that for a metric space $(X,d)$ and $C>0$, the \emph{Rips complex} is the simplicial complex, in which $\sigma=[x_0,x_1,\ldots,x_n]$ is an $n$-simplex for $x_0,x_1,\ldots,x_n \in X$ if and only if $d(x_i,x_j) \leqslant C$. 

Bowditch proved in \cite{bowditch2014embedding} that for a uniformly discrete coarse median space of bounded geometry and finite rank, there is a polynomial bound on growth within intervals. Now given an interval $[a,b]$ in such a space $X$ with parameters $(K,H_0,\kappaiv,\kappav)$, any point $x\in [a,b]$ can be written in the form  $x=\mu(a,y,b)$. Hence
$$x=\mu(a,y,b)\thicksim_{Kd(a,b)+H_0} \mu(a,y,a) =a,$$
which implies that $\diam([a,b]) \leqslant 2Kd(a,b)+2H_0$.
Taking the subset $Q=[a,b]\subseteq [a,b]_\kappaiv$ (where $[a,b]_\kappaiv$ is Bowditch's definition of coarse interval), we obtain the following as a corollary to Bowditch's result \cite[Proposition 9.8]{bowditch2014embedding}.

\begin{prop}\label{bounding intervals in terms of diam}
Let $(X,d,\mu)$ be a uniformly discrete quasi-geodesic coarse median space with bounded geometry and rank at most $n$. Then there is a function $p: \mathbb N \to \mathbb N$ with $p(r)=o(r^{n+\epsilon})$ for all $\epsilon>0$, such that $\card[a,b] \leqslant p(d(a,b))$ for any $a,b\in X$.
\end{prop}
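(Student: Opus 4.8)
The plan is to derive this as a corollary of Bowditch's polynomial growth estimate for coarse intervals, \cite[Proposition 9.8]{bowditch2014embedding}, which provides, for a uniformly discrete coarse median space of bounded geometry and rank at most $n$, a function $p_0$ with $p_0(r)=o(r^{n+\epsilon})$ for every $\epsilon>0$ such that every subset of Bowditch's coarse interval $[a,b]_\kappaiv$ of diameter $r$ has cardinality at most $p_0(r)$. To apply this to our interval $[a,b]=\{\mu(a,x,b)\mid x\in X\}$ I would establish two elementary facts: that $[a,b]\subseteq [a,b]_\kappaiv$, and that $\diam([a,b])$ is bounded linearly in $d(a,b)$.

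For the inclusion, a short computation using (M1), (M2) and the coarse $4$-point axiom (C2) shows that for $c=\mu(a,x,b)\in[a,b]$ one has $\mu(a,c,b)\thicksim_\kappaiv c$; this is precisely the condition defining membership of Bowditch's coarse interval, so $[a,b]\subseteq[a,b]_\kappaiv$. For the diameter estimate, since $(X,d,\mu)$ is quasi-geodesic we may take $\rho(t)=Kt+H_0$ in (C1), and then any $c=\mu(a,y,b)\in[a,b]$ satisfies $c\thicksim_{Kd(a,b)+H_0}\mu(a,y,a)=a$, whence $\diam([a,b])\leqslant 2Kd(a,b)+2H_0$ --- the estimate already recorded above.

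Feeding the bounded subset $[a,b]$ of $[a,b]_\kappaiv$ into Bowditch's bound then gives $\card[a,b]\leqslant p_0(2Kd(a,b)+2H_0)$, and setting $p(r):=\lceil p_0(2Kr+2H_0)\rceil$ produces a function $\mathbb N\to\mathbb N$ with $\card[a,b]\leqslant p(d(a,b))$; precomposition with an affine map and rounding up leave the asymptotic exponent unchanged, so $p(r)=o(r^{n+\epsilon})$ for every $\epsilon>0$, as required. There is no real obstacle here beyond bookkeeping: the point to be careful about is purely one of conventions --- confirming that $[a,b]=\{\mu(a,x,b)\mid x\in X\}$ sits inside Bowditch's coarse interval for the constant $\kappaiv$ appearing in (C2), and that \cite[Proposition 9.8]{bowditch2014embedding} is phrased for an arbitrary bounded subset of the coarse interval, which is exactly why the linear diameter estimate is needed. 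No idea beyond Bowditch's theorem enters.
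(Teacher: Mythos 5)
Your proposal is correct and follows essentially the same route as the paper: the paper likewise deduces the statement from Bowditch's \cite[Proposition 9.8]{bowditch2014embedding} by observing that $[a,b]\subseteq[a,b]_\kappaiv$ and that $\diam([a,b])\leqslant 2Kd(a,b)+2H_0$ via the quasi-geodesic form $\rho(t)=Kt+H_0$ of (C1). Your explicit check that $\mu(a,c,b)\thicksim_\kappaiv c$ for $c\in[a,b]$ just spells out the inclusion the paper takes for granted.
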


\begin{proof}
Bowditch proved this result in the context that $X$ is a connected bounded valency graph with edge-path metric. We replace our metric $d$ with the edge-path metric provided by the Rips complex. Since the metric space $(X,d)$ is quasi-geodesic and has bounded geometry, taking the Rips parameter sufficiently large ensures that this metric provides $X$ with the structure of a connected bounded valency graph as required. Furthermore this new metric $d'$ is quasi-isometric to $d$, again using the fact that the space is quasi-geodesic. Applying Bowditch's result, $\card[a,b]$ is $o(r^{n+\epsilon})$ where $r=d'(a,b)$. Since $d$ is $O(d')$ the result follows.
\end{proof}

We now provide a converse to Bowditch's theorem, showing that this growth condition indeed characterises the rank.

\begin{thm}\label{coarseintervalrank}\label{growth rank}
\growthrankthm
\end{thm}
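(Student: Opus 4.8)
The plan is to prove the cyclic chain $1)\Rightarrow 3)\Rightarrow 2)\Rightarrow 1)$; the implication $1)\Rightarrow 2)$ is immediate from Proposition \ref{bounding intervals in terms of diam}, and $3)\Rightarrow 2)$ is trivial since $p(r)/r^{n+1}\to 0$ forces $p(r)=o(r^{n+\epsilon})$ for every $\epsilon>0$. So the content is in obtaining $2)\Rightarrow 1)$, and then separately upgrading $1)$ to the sharper bound $3)$. In fact I expect that both non-trivial implications will be deduced from the structure theorem for coarse cubes (Theorem \ref{productresult}, Corollary \ref{subcubes}): a coarse $(n{+}1)$-cube splits, up to $(\rho_+,C)$-coarse median isomorphism, as an $\ell^1$-product $[\0,e_1]\times\cdots\times[\0,e_{n+1}]$, and the cardinality of such a product is the \emph{product} of the cardinalities of the factor intervals.

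For $2)\Rightarrow 1)$: suppose $X$ does not have rank $\leqslant n$. By Proposition \ref{char for high rank-final}(3) (or its reformulation via multi-medians, Theorem \ref{hyper rank}) this means that for every $C$ there is an $L$-coarse cube $\sigma\colon I^{n+1}\to X$ — with $L$ a fixed constant, since failure of the rank bound persists for arbitrarily small quasi-morphism constants once the space is quasi-geodesic — all of whose edges at the origin have length at least $C$. Apply Theorem \ref{productresult} with $n$ replaced by $n+1$: the interval $\mathcal A=[\0,\1]$ is $(\rho_+,C_0)$-coarse median isomorphic to $\mathcal B=\prod_{i=1}^{n+1}[\0,e_i]$. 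Since $(X,d)$ is quasi-geodesic, uniformly discrete and of bounded geometry, each factor $[\0,e_i]$ contains a net of points spaced $\gtrsim 1$ apart along a quasi-geodesic from $\0$ to $e_i$, hence $\card[\0,e_i]\gtrsim d(\0,e_i)\gtrsim C$. Therefore $\card\mathcal B \gtrsim C^{n+1}$, and since the coarse median isomorphism $\Phi$ has bornologous inverse with constants independent of the cube, it is uniformly finite-to-one, so $\card[\0,\1]\gtrsim C^{n+1}/(\text{const})$. But $d(\0,\1)\leqslant L(n+1)+$ const times the maximal edge length, which is itself controlled by $\max_i d(\0,e_i)$ via Lemma \ref{coarse iterated estimate} applied to $\mu(e_1,\dots,e_{n+1};\0)\thicksim \1$; so $d(\0,\1)\lesssim \max_i d(\0,e_i)$. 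Pick the cube so that all $d(\0,e_i)$ are comparable to a single $R$ (this can be arranged, or one argues with the largest factor and absorbs the rest). Then $\card[\0,\1]\gtrsim R^{n+1}$ while $d(\0,\1)\lesssim R$, contradicting any bound $\card[a,b]\leqslant p(d(a,b))$ with $p(r)=o(r^{n+\epsilon})$ once $\epsilon<1$. This gives $2)\Rightarrow 1)$.

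For $1)\Rightarrow 3)$: here I want to sharpen Bowditch's $o(r^{n+\epsilon})$ bound to $o(r^{n+1})$. The idea is to run an induction on rank using the subcube corollary (Corollary \ref{subcubes}). Fix $a,b$ with $d(a,b)=r$ and consider the coarse cube structure on $[a,b]$. If $\rank X\leqslant n$, realise $[a,b]$ (up to coarse median isomorphism and bounded finite-to-one error) inside a coarse $n$-cube, so $[a,b]$ embeds coarsely as $\prod_{i=1}^{n}[a,e_i]$ with $\sum_i d(a,e_i)\lesssim r$. By the rank $\leqslant n-1$ property of each proper sub-face — more precisely, by applying the inductive hypothesis to the spaces $([a,e_i],\mu_{a,e_i})$, which have rank $\leqslant n$ but where we peel off one dimension — together with the multiplicativity $\card\prod_i[a,e_i]=\prod_i\card[a,e_i]$ and the AM–GM/convexity estimate $\prod_i\card[a,e_i]\lesssim \prod_i d(a,e_i)^{?}$, one arranges that the total exponent is $n$ with a genuinely sublinear correction, i.e. $o(r^{n+1})$. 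The cleanest route is: by rank $\leqslant n$, any $n+1$ of the "coordinate directions" in a putative large cube collapse, so the interval cannot contain a coarse $(n+1)$-cube of large diameter; a counting argument (each point of $[a,b]$ is determined up to bounded ambiguity by its $n$ coordinates $\mu(a,x,e_i)$, and the $i$-th coordinate ranges over $[a,e_i]$) then yields $\card[a,b]\lesssim \prod_{i=1}^n \card[a,e_i]$, and since $\card[a,e_i]\lesssim d(a,e_i)^{1+o(1)}$ by Bowditch combined with $\rank[a,e_i]\leqslant n$ but recursively $\leqslant n-1$ in the relevant direction, optimising $\sum d(a,e_i)\lesssim r$ gives the bound $r^{n}\cdot r^{o(1)}=o(r^{n+1})$.

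The main obstacle will be the induction in $1)\Rightarrow 3)$: one must carefully control the dependence of all constants (the $\rho_+$, the finite-to-one multiplicity, the net spacing) on the stage of the induction and on $L$, and verify that splitting an interval into its $n$ coordinate factors does not silently cost a factor of $r^{\epsilon}$ at each of the $n$ steps — which would degrade $o(r^{n+1})$ to $o(r^{n+\epsilon})$ and gain nothing over Bowditch. The key point making it work is that the edge-length bound in the lemma preceding Theorem \ref{productresult} gives genuine \emph{linear} (not just polynomial) control between parallel edges, so the geometry of a single coarse cube is really that of a Euclidean cuboid; combined with quasi-geodesity this pins the exponent at exactly $n+1$ as the threshold.
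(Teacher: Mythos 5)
You have correctly identified the paper's engine for the hard direction (the cube/subcube splitting of Theorem \ref{productresult} and Corollary \ref{subcubes}, counting $\gtrsim C$ points in each edge interval via a projected quasi-geodesic, and uniform finite-to-one-ness from bounded geometry), but your logical scaffolding is broken. You have the trivial implication backwards: condition 2) ($o(r^{n+\epsilon})$ for \emph{every} $\epsilon>0$) is the \emph{stronger} statement, so 2)$\Rightarrow$3) is immediate (take $\epsilon=1$), whereas your claim that 3)$\Rightarrow$2) is trivial is false as a direct implication: $p(r)=r^{n+1/2}$ satisfies $p(r)/r^{n+1}\to 0$ but is not $o(r^{n+\epsilon})$ for $\epsilon<\tfrac12$; that implication only holds \emph{a posteriori} through 1). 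Consequently your entire second half --- the induction meant to ``sharpen'' Bowditch's bound and prove 1)$\Rightarrow$3) --- is aimed at a statement \emph{weaker} than 2), so nothing needs proving there (and as written it is not a proof in any case: undetermined exponents, ``one arranges'', etc.). The implication carrying the content is 3)$\Rightarrow$1): the cube construction must contradict $p(r)=o(r^{n+1})$, i.e.\ it must produce intervals $[\0,x]$ with $d(\0,x)\to\infty$ and $\card[\0,x]$ bounded below by a degree-$(n+1)$ polynomial in $d(\0,x)$ with a fixed positive leading coefficient; phrasing the contradiction against ``$o(r^{n+\epsilon})$ once $\epsilon<1$'' proves only $\neg 1)\Rightarrow\neg 2)$ and, with the false link 3)$\Rightarrow$2), leaves the equivalence with 3) unestablished. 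The paper's route is 1)$\Rightarrow$2) (Proposition \ref{bounding intervals in terms of diam}), 2)$\Rightarrow$3) a fortiori, 3)$\Rightarrow$1) by the cube argument.

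There is also a genuine gap inside your cube argument. Failure of rank $\leqslant n$ gives, for every $C$, an $L_0$-cube all of whose edges at the origin have length $\geqslant C$, but with \emph{no upper bound and no comparability} among the edge lengths, and your contradiction needs the counting scale to be comparable to $d(\0,\1)$ with constants independent of $C$. The parenthetical ``pick the cube so that all $d(\0,e_i)$ are comparable (this can be arranged)'' is exactly the unproved step, and the fallback ``argue with the largest factor and absorb the rest'' fails: if $d(\0,e_1)=\dots=d(\0,e_n)\approx C$ while $d(\0,e_{n+1})=R\gg C$ (say $R=2^C$), the product bound only yields $\card[\0,\1]\gtrsim C^{n}R$, while $d(\0,\1)$ is itself of order at least $R$, and $C^{n}R$ is $o(R^{n+1})$ (indeed $o(R^{n+\epsilon})$), so no contradiction with 3) or even 2) results. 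The repair is the tool you cite but never deploy: project a quasi-geodesic from $\0$ to $e_i$ into $[\0,e_i]$ and choose a truncation point $x_i\in[\0,e_i]$ with $C\leqslant d(\0,x_i)<C+G$, count $\geqslant C/\mathrm{const}$ distinct points in each $[\0,x_i]$ by de-looping a projected quasi-geodesic, and then apply Corollary \ref{subcubes} to get a uniformly finite-to-one coarse median isomorphism $\Psi'$ from $[\0,x_1]\times\dots\times[\0,x_{n+1}]$ to $[\0,x]$ with $x=\Psi'((x_1,\ldots,x_{n+1}))$, so that simultaneously $\card[\0,x]\geqslant \gamma C^{n+1}/N$ and $d(\0,x)\leqslant \lambda_0(n+1)(C+G)+\mathrm{const}$, all constants independent of $C$. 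With this truncation the lower bound really is a degree-$(n+1)$ polynomial in $d(\0,x)$ and contradicts 3), which is how the paper closes the cycle.
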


\begin{proof}[Proof of Theorem \ref{coarseintervalrank}]
(1)$\Rightarrow$(2) is given by Proposition \ref{bounding intervals in terms of diam}, while (2)$\Rightarrow$(3) \emph{a fortiori}.

(3)$\Rightarrow$(1): Suppose $X$ is $(\alpha,\beta)$-quasi-geodesic, $(K,H_0, \kappaiv, \kappav)$ are parameters of $X$ and $\rank X>n$ (note that we do not assume $X$ has finite rank). By Theorem \ref{char for high rank-final}, there exists a constant $L_0>0$, such that for any $C>0$, there exists an $L_0$-coarse cube $\sigma: I^{n+1} \rightarrow X$ with $d(\sigma(\bar{e}_i),\sigma(\bar{\0}))>C$ for all $i$. After setting $\0:=\sigma(\bar{\0}), \1:=\sigma(\bar{\1})$ and $e_i:=\sigma(\bar{e}_i)$ for each $i$, we have $d(e_i,\0)>C$.

Now choose a discrete $(\alpha, \beta)$-quasi-geodesic $\0=p_0, \ldots p_k=e_i$ and construct $q_j=\langle\0,p_j,e_i\rangle$ to get a sequence of points in $[\0, e_i]$ with $d(q_j, q_{j-1})\leq G$ where $G=K(\alpha+\beta)+H_0$ is independent of $C$. Since $d(\0, q_0)=0$ and $d(\0, q_k)>C$,  we may choose the first $j$ such that $d(\0, q_j)\geq C$ and for this $j$ we also have $d(\0, q_j)<C+G$. Setting $x_i:=q_j\in [\0, e_i]$, we have $C\leqslant d(\0, x_i)<C+G$.

Choose a discrete $(\alpha,\beta)$-quasi-geodesic $z_0,z_1,\ldots,z_k\in X$ connecting $\0$ and $x_1$. Projecting $z_i$ into $[\0,x_1]$, we obtain a sequence $\0=y_0,y_1,\ldots,y_k=x_1$ with $d(y_i,y_{i-1}) \leqslant \jedit{G}$, where $y_i=\mu(\0,z_i,x_1)$. We will inductively ``de-loop'' this sequence to define a subsequence $y_{j_0},\ldots,y_{j_l}$ such that the points in it are distinct, but still satisfy $d(y_{j_p},y_{j_{p-1}}) \leqslant \jedit{G}$. Let $j_0$ be the maximal index such that $y_{j_0}=y_0$. Then for $l>0$, set $j_p$ to be the maximal index such that $y_{j_p}=y_{j_{p-1}+1}$ to obtain the required sequence. This process allows us to assume that we have picked the sequence $\0=y_0,y_1,\ldots,y_l=x_1$ to be distinct while ensuring that $d(y_i,y_{i-1}) \leqslant \jedit{G}$ for each $i$. Now we have:
$$C\leqslant d(\0,x_1) \leqslant \sum_{i=1}^l d(y_i,y_{i-1}) \leqslant l\cdot \jedit{G}, $$
which implies $\card[\0,x_1] \geqslant l\geqslant \jedit{CG^{-1}}$. Similar estimate holds for each $[\0,x_i]$. Hence we obtain that for the constant $\gamma:=\jedit{G}^{-(n+1)}$,
$$\card([\0,x_1]\times \ldots \times [\0,x_{n+1}]) \geqslant \gamma C^{n+1}.$$

Now set $x:= \mu(\0, \mu(x_1, \ldots, x_{n+1}; \1),\1)$. By Corollary \ref{subcubes}, there exists:
\begin{itemize}
\item a constant $\lambda_0:=\max\{K', H'_0, C'\}$,  depending only on $n,L_0$ and parameters of the space;
\item a $(\lambda_0t+\lambda_0,\lambda_0)$-coarse median isomorphism
\[
\Psi':[\0,x_1]\times \ldots \times [\0,x_{n+1}] \to [\0,x].
\]
\end{itemize}
 In particular for any $\vec{z},\vec{y} \in [\0,x_1]\times \ldots \times [\0,x_{n+1}]$ we have:
\begin{equation}\label{distance bounds original}
 \lambda_0^{-1}d_{\ell^1}(\vec{z},\vec{y})-\lambda_0 \leqslant d(\Psi'(\vec{z}),\Psi'(\vec{y})) \leqslant \lambda_0d_{\ell^1}(\vec{z},\vec{y})+\lambda_0.
\end{equation}
Since $X$ has bounded geometry, there exists a constant $N$ depending only on $\lambda_0$ such that $\card\Psi'^{-1}(\{y\}) \leqslant N$ for any $y\in [\0,x]$. In other words, $\Psi'$ may collapse at most $N$ points to a single point. Hence $\card\Psi'(A) \geqslant \frac{1}{N}\card A$ for any $A\subseteq [\0,x_1]\times \ldots \times [\0,x_{n+1}]$. In particular, we have
\begin{equation}\label{intervalcardinalityestimate}
\card[\0,x] \geqslant \card\Psi'([\0,x_1]\times \ldots \times [\0,x_{n+1}]) \geqslant \frac{1}{N} \card([\0,x_1]\times \ldots \times [\0,x_{n+1}]) \geqslant \frac{\gamma}{N} C^{n+1}.
\end{equation}

Now we would like to estimate the distance $d(\0,x)$ and show that it is approximately linear in $C$. First notice that $\Psi'(\vec{0})=\0$ and by definition we have
\begin{eqnarray*}
\Psi'(\vec{x})&=&\mu(\mu(x_1,\ldots, x_{n+1};x),\0,x)=\mu(x_1,\ldots, x_{n+1},\0;x)\\
&=& \mu(x_1,\ldots, x_{n+1},\0;\mu(x_1,\ldots, x_{n+1},\0;\1))\\
&\thicksim_{E_n}& \mu(x_1,\ldots, x_{n+1},\0;\1)=x
\end{eqnarray*}
where the estimate in the third line follows from Lemma \ref{coarse iterated estimate new2} and the constant $E_n$ depends only on $n,\lambda_0$ and $\kappaiv$. Combining with (\ref{distance bounds original}), we have:
\begin{eqnarray*}
d(\0,x) &\leqslant & d(\Psi'(\vec{0}),\Psi'(\vec{x}))+E_n \leqslant \lambda_0 d_{\ell^1}(\vec{0},\vec{x})+\lambda_0+E_n=\lambda_0 \sum\limits_{i=1}^{n+1}d(\0, x_i)+\lambda_0 + E_n  \\
&\leqslant & \lambda_0 (n+1)(C+G)+\lambda_0 + E_n.
\end{eqnarray*}
After rearranging, we get
\[
C\geqslant\frac{d(\0,x)-\lambda_0(nG+G+1)-E_n}{\lambda_0 (n+1)}.
\]
Combining with (\ref{intervalcardinalityestimate}), we obtain:
$$\card[\0,x] \geqslant \frac{\gamma}{N} \Big(\frac{d(\0,x)-\lambda_0(nG+G+1)-E_n}{\lambda_0 (n+1)}\Big)^{n+1}.$$
On the other hand, (\ref{distance bounds original}) implies that
\begin{eqnarray*}
d(\0,x)&\geqslant & d(\Psi'(\vec{0}),\Psi'(\vec{x}))-E_n \geqslant \lambda_0^{-1}d_{\ell^1}(\vec{0},\vec{x})-\lambda_0-E_n \\
&\geqslant &\lambda_0^{-1}(n+1)C-\lambda_0-E_n.
\end{eqnarray*}
So $d(\0,x) \to \infty$ as $C \to \infty$.

Therefore for any $C>0$ we have constructed an interval $[\0,x]$ such that the distance $d(\0,x)$ goes to infinity as $C \to \infty$, and the cardinality $\sharp[\0,x]$ is bounded below by a polynomial of degree $n+1$ in $d(\0,x)$ with positive leading coefficient $\frac{\gamma}{N(\lambda_0(n+1))^{n+1}}$. This contradicts the existence of the function $p$.
\end{proof}

Theorem \ref{coarseintervalrank} allows us to characterise the rank of a coarse interval space purely in terms of the growth of intervals:

\begin{cor}
A uniformly discrete, bounded geometry, quasi-geodesic coarse interval space $(X, d,[\cdot,\cdot])$ has rank at most $n$ \emph{if and only if} there is a function $p: \Rp \to \Rp$ with $\lim\limits_{r\rightarrow\infty}p(r)/r^{n+1}=0$, such that $\card[a,b] \leqslant p(d(a,b))$ for any $a,b\in X$.
\end{cor}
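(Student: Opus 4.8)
The plan is to reduce the corollary to Theorem~\ref{coarseintervalrank} by passing through the coarse median operator induced by $\I$. A coarse interval space satisfies axioms (I1)$\sim$(I3), so Theorem~\ref{coarse interval converse} applies and yields an induced operator $\mu$ for which $(X,d,\mu)$ is a coarse median space; since $(X,d)$ is by hypothesis uniformly discrete, quasi-geodesic and of bounded geometry, $(X,d,\mu)$ meets the hypotheses of Theorem~\ref{coarseintervalrank}. By Theorem~\ref{induce cm and ci} and Corollary~\ref{rank preserving}, the rank of $(X,d,\mu)$ equals the rank of the coarse interval structure it induces, and the latter is uniformly close to $\I$ (see below); since the defining condition for rank of a coarse interval structure is plainly stable under replacing intervals by uniformly Hausdorff-close ones, we get $\rank(X,d,\I)\leqslant n$ if and only if $\rank(X,d,\mu)\leqslant n$. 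Thus it remains to compare the cardinality of the intervals $[a,b]$ of $\I$ with that of the coarse-median intervals $[a,b]_\mu:=\{\mu(a,x,b):x\in X\}$.

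First I would record that $[a,b]$ and $[a,b]_\mu$ are uniformly Hausdorff close. On the one hand, by the construction of $\mu$ we have $\mu(a,x,b)\in[a,x]\cap[x,b]\cap[b,a]\subseteq[a,b]$ for every $x\in X$, so $[a,b]_\mu\subseteq[a,b]$. On the other hand, any $c\in[a,b]$ lies in $\N_0([a,b])$, so Lemma~\ref{estm1} gives $c\thicksim_{\g(0)}\mu(a,b,c)=\mu(a,c,b)\in[a,b]_\mu$; hence $[a,b]\subseteq\N_{\g(0)}([a,b]_\mu)$. Therefore $d_H([a,b],[a,b]_\mu)\leqslant\g(0)$ with $\g(0)$ independent of $a,b$. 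Now invoke bounded geometry: fix $N\in\mathbb N$ with $\card B(x,\g(0))\leqslant N$ for all $x\in X$, and for each $c\in[a,b]$ choose $\phi(c)\in[a,b]_\mu$ with $d(c,\phi(c))\leqslant\g(0)$. The fibres of $\phi\colon[a,b]\to[a,b]_\mu$ lie in balls of radius $\g(0)$, so $\card[a,b]\leqslant N\,\card[a,b]_\mu$, while $[a,b]_\mu\subseteq[a,b]$ gives $\card[a,b]_\mu\leqslant\card[a,b]$. Hence the two cardinalities agree up to the fixed multiplicative constant $N$.

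Since multiplying a function by $N$ changes neither the condition ``$p(r)=o(r^{n+\epsilon})$ for all $\epsilon>0$'' nor ``$p(r)/r^{n+1}\to0$'', the existence of a controlling function $p$ with $\card[a,b]\leqslant p(d(a,b))$ is equivalent to the existence of one with $\card[a,b]_\mu\leqslant p(d(a,b))$. Applying Theorem~\ref{coarseintervalrank} to the coarse median space $(X,d,\mu)$ identifies the latter with $\rank(X,d,\mu)\leqslant n$, and by the first paragraph this is equivalent to $\rank(X,d,\I)\leqslant n$, which proves the corollary. The only point requiring a little attention is the closeness step in the second paragraph: once one checks that a coarse interval space genuinely satisfies the hypotheses of the results it is built on, and that its intervals are uniformly Hausdorff close to the intervals of the induced coarse median, the argument is a routine assembly of Theorem~\ref{coarse interval converse}, Corollary~\ref{rank preserving} and Theorem~\ref{coarseintervalrank}, so I do not anticipate any substantial obstacle.
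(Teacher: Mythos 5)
Your proposal is correct and follows the route the paper intends (the corollary is stated without proof as a direct consequence of Theorem \ref{growth rank} via the induced coarse median and Corollary \ref{rank preserving}): pass to the induced coarse median, transfer rank, and compare interval cardinalities. Your Hausdorff-closeness argument via Lemma \ref{estm1} together with bounded geometry correctly supplies the cardinality comparison that the paper leaves implicit, so no gap remains.
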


\section{Intervals and metrics for ternary algebras}\label{coarsemedianalgebras}

Bowditch observed that perturbing the metric for a coarse median space up to quasi-isometry respects the coarse median axioms. It is not, however, \emph{a priori} obvious the extent to which the metric is determined by the coarse median operator.  We will now show that for a quasi-geodesic coarse median space $(X,d,\mu)$ of bounded geometry the metric is  determined \emph{uniquely} up to quasi-isometry by $\mu$. This motivates our definition of coarse median algebra, as given in the introduction.

To establish the uniqueness of the metric, we will construct a canonical metric defined purely in terms of the intervals associated to the coarse median operator. The construction may be of independent interest since it can be defined for any ternary operator satisfying some weakening of axioms (M1) and (M2), and therefore in the context of a more general notion of interval structure. (The following reversal axiom can in fact be weakened to the existence of bijections between the corresponding intervals $[a,b]$ and $[b,a]$.)

\subsection{Abstract ternary algebras and induced metrics}\label{ternary}
\let\nu=\mu
Consider a ternary algebra $(X,\nu)$ satisfying the following axioms:

\begin{itemize}
  \item[(T1) \emph{Majority vote}:] $\nu(a,a,x)=\nu(a,x,a)=a$ for all $a,x\in X$;
  \item[(T2) \emph{Reversal}:] $\nu(a,x,b)=\nu(b,x,a)$ for all $a,x,b\in X$.
\end{itemize}

Classically it is natural to think of the ternary operator $\nu$ as furnishing a notion of betweenness, whereby $c$ lies between $a, b$ if and only if $\nu(a,c,b)=c$. This definition is not well adapted to the coarse world, where statements are typically true up to controlled distortion. Regarding the operation $x\mapsto \nu(a,x,b)$ instead as providing a projection onto the interval $[a,b]=\{\mu(a,x,b)\mid x\in X\}$ is better suited to this environment.

Axiom (T1) ensures that the interval $[a,a]$ is the singleton $\{a\}$ while axiom (T2) ensures that $[a,b]=[b,a]$. These axioms together are a slight weakening of axioms (M1) and (M2) for a (coarse) median algebra.

\begin{ex}\label{graph}
Let $\Gamma$ be a connected graph and for any $a,b,x\in V(\Gamma)$ choose a vertex, denoted $\nu(a,x,b)$, which lies on an edge geodesic from $a$ to $b$ and minimises distance to $x$ among all such choices. Clearly we can do so to satisfy axiom (T2), while axiom (T1) is immediate. With this definition of the ternary operator, the interval $[a,b]$ is exactly the set of vertices on edge geodesics from $a$ to $b$.
\end{ex}

We will use cardinalities of intervals to measure distances.  In order to ensure that these distances are finite, we need to impose a condition that points can be joined by chains of finite intervals:

\begin{defn}
A ternary algebra $(X,\nu)$ is said to satisfy the \emph{finite interval chain condition}, if for any $a,b\in X$ there exists a sequence $a=x_0, x_1, \ldots, x_n:=b$  in $X$ such that the cardinality of each interval $[x_i, x_{i+1}]$ is finite for $i=0,1,\ldots, n-1$.
\end{defn}

\begin{defn}
Given a ternary algebra $(X,\nu)$ satisfying the finite interval chain condition, we define the \emph{induced function} $d_\nu$ on $X\times X$ as follows: for any $a,b \in X$,
\[
d_\nu\relax (a,b)=\min \Big\{ \sum_{i=1}^n (\card [x_{i-1},x_i]-1): a=x_0,\ldots,x_n=b, x_i\in X, n\in \mathbb{N} \Big\}.
\]
\end{defn}

It is routine  to check that $d_\nu$  satisfies the triangle inequality. The imposition of axioms (T1) and (T2) ensure that the function $d_\nu$ also satisfies the obvious symmetry, reflexivity and positivity conditions so that $d_\nu$ is  a metric in this case. When (T1) and (T2) are satisfied we will refer to $d_\nu$ as the \emph{induced metric}.

\begin{ex}
Let $(X,\mu)$ be a discrete median algebra and let $Z$ be its geometric realisation as a CAT(0) cube complex. Then the induced metric $d_\mu$ is the edge-path metric on the vertices of $Z$.
\end{ex}

\begin{ex}
Let $\Gamma$ be a connected graph and $\nu$ the projection operator defined in Example \ref{graph}. Then the induced metric $d_\nu$ is the edge-path metric on the vertices of $\Gamma$.
\end{ex}

\subsection{Uniqueness of coarse median metrics}\label{uniquemetricsection}

It is easy to show that one can change the metric of a coarse median space arbitrarily within its quasi-isometry class. It is a remarkable fact, as we will now show, that the quasi-isometry class of the metric is determined uniquely by the coarse median operator. Indeed the induced metric is the unique coarse median metric up to quasi-isometry:

\begin{thm}\label{d qi to d_mu}
For a bounded geometry quasi-geodesic coarse median space $(X,d,\mu)$, the metric $d$ is quasi-isometric to the induced metric $d_\mu$.
\end{thm}

As an immediate corollary we have the following:

\begin{thmuniquemetric}
For a bounded geometry quasi-geodesic coarse median space $(X,d,\mu)$, the metric $d$ is unique up to quasi-isometry.
\end{thmuniquemetric}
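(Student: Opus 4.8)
The plan is to prove the two assertions of the theorem simultaneously by showing that for any bounded geometry quasi-geodesic coarse median space $(X,d,\mu)$, the induced metric $d_\mu$ is quasi-isometric to $d$; since $d_\mu$ depends only on $\mu$, this immediately yields uniqueness of $d$ up to quasi-isometry among all such metrics. So the whole content is a two-sided comparison $L^{-1}d(a,b)-C \leqslant d_\mu(a,b) \leqslant L d(a,b) + C$.

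First I would establish the easy inequality $d_\mu \lesssim d$. Fix a discrete $(\alpha,\beta)$-quasi-geodesic $a = p_0, p_1, \ldots, p_k = b$ with $k \leqslant \alpha d(a,b) + \beta$ and consecutive points at distance $\leqslant \alpha+\beta$. Project it into the interval $[a,b]$ by setting $q_j := \mu(a,p_j,b)$, as in the proof of Theorem \ref{coarseintervalrank}; by axiom (C1), $d(q_{j-1},q_j) \leqslant \rho(\alpha+\beta) =: G$, and $q_0 \thicksim q_j$-type estimates confine the $q_j$ to $[a,b] \subseteq B(a, \rho(d(a,b)))$. Then by Lemma \ref{finiteness} each interval $[q_{j-1},q_j]$ has cardinality at most $C(G)$, a constant independent of $a,b$. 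Using the chain $a = q_0, q_1, \ldots, q_k = b$ in the definition of $d_\mu$ gives $d_\mu(a,b) \leqslant k\,(C(G)-1) \leqslant (C(G)-1)(\alpha d(a,b) + \beta)$, which is the desired large-scale Lipschitz bound. (One should check the chain can be taken with $q_0 = a$, $q_k = b$ exactly, using (M1); $\mu(a,a,b)=a$ and $\mu(a,b,b)=b$.)

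The hard direction is $d \lesssim d_\mu$, i.e. a lower bound on interval-chain length in terms of $d$. Here I would argue one link at a time: it suffices to show there is a constant $\kappa$ such that $d(x,y) \leqslant \kappa\,(\card[x,y]-1) + \kappa$ for all $x,y$, since summing over a chain realising $d_\mu(x,y)$ then gives $d(x,y) \leqslant \kappa\, d_\mu(x,y) + \kappa$ (the additive constant handled by the triangle inequality and the quasi-geodesic assumption, or by noting $d_\mu(x,y) \geqslant 1$ when $x \neq y$). To bound a single interval, run the ``de-looping'' construction from the proof of Theorem \ref{coarseintervalrank}: take a discrete $(\alpha,\beta)$-quasi-geodesic from $x$ to $y$, project it into $[x,y]$ to get $x = y_0, y_1, \ldots, y_m = y$ with $d(y_{i-1},y_i) \leqslant K(\alpha+\beta)+H_0 =: G'$, then pass to a subsequence of distinct points still satisfying the step bound. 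This exhibits at least $d(x,y)/G'$ distinct points of $[x,y]$, hence $\card[x,y] \geqslant d(x,y)/G'$, which is exactly the required estimate with $\kappa = G'$.

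I expect the main obstacle to be bookkeeping around the additive constants and the repeated-endpoint subtleties in the chain definition (ensuring the chains start and end exactly at $a$ and $b$, and that $\card[x_i,x_{i+1}]-1 \geqslant 1$ so that the sum genuinely dominates the number of links), rather than any conceptual difficulty — both inequalities reuse the projection-into-interval and de-looping techniques already developed for Theorem \ref{coarseintervalrank}, together with the bounded geometry bound of Lemma \ref{finiteness}. A minor point to verify is that the finite interval chain condition holds automatically here: any two points are joined by a discrete quasi-geodesic whose successive projections into the relevant interval are at bounded distance and hence span intervals of bounded (finite) cardinality by Lemma \ref{finiteness}, so $d_\mu$ is genuinely a finite-valued metric.
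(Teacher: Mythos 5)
Your proposal is correct and follows essentially the same route as the paper's proof: both directions discretise an $(L,C)$-quasi-geodesic, project into the relevant interval, use the de-looping trick to extract distinct points for the lower bound $d(x,y)\leqslant G'(\card[x,y]-1)$, and invoke Lemma \ref{finiteness} (bounded geometry) to bound single-link interval cardinalities for the upper bound, summing over an optimal chain exactly as the paper does. The only cosmetic deviation is that in the direction $d_\mu \lesssim d$ you project the quasi-geodesic into $[a,b]$ before chaining, whereas the paper chains the quasi-geodesic points directly; this changes nothing, and your extra remark verifying the finite interval chain condition is a welcome point the paper leaves implicit.
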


\begin{proof}[Proof of Theorem \ref{d qi to d_mu}]
Let $(X,d,\mu)$ be an $(L,C)$-quasi-geodesic coarse median space with bounded geometry and parameters $(K, H_0, \jedit{\sout{\kappao,}}\kappaiv,\kappav)$.

{First} we will show that $d$ can be controlled by $d_\mu$. Given $a,b\in X$,  let $a=a_0,\ldots,a_n=b$ be a sequence of points such that
$$d_{\mu\relax}(a,b)=\sum_{i=1}^n (\card [a_{i-1},a_i]-1).$$
Fix $i$ and choose an $(L,C)$-quasi-geodesic $\gamma_i$ with respect to the metric $d$ connecting $a_{i-1}$ and $a_i$. If $n_i=\lfloor d(a_{i-1},a_i) \rfloor$, the integer part of $d(a_{i-1},a_i)$, and
$$x_0=\gamma_i(0)=a_{i-1},x_1=\gamma_i(1),\ldots,x_{n_i}=\gamma_i(n_i), x_{n_i+1}=\gamma_i(d(a_{i-1},a_i))=a_i,$$
then $d(x_{i-1},x_i) \leqslant L+C$. Letting $y_j=\langle a_{i-1},a_i,x_j\rangle\in [a_{i-1},a_i]$, axiom (C1) ensures that  $d(y_{j-1},y_j) \leqslant K(L+C)+H_0$. We set $C':=K(L+C)+H_0$.

As in the proof of Theorem \ref{coarseintervalrank} we can ``de-loop'' the sequence $y_0,y_1,\ldots,y_{n_i+1}$ to a subsequence $y_{j_0},\ldots,y_{j_l}$ in $[a_{i-1},a_i]$ with the property that the points in it are distinct, but still satisfy $d(y_{j_k},y_{j_{k-1}}) \leqslant C'$.
Hence we have
$$d(a_{i-1},a_i) \leqslant \sum_{k=1}^l d(y_{j_{k-1}},y_{j_k}) \leqslant l \cdot C' \leqslant (\card [a_{i-1},a_i]-1) \cdot C'.$$
The same estimate holds for other $i$ as well. Therefore we obtain that
$$d(a,b) \leqslant \sum_{i=1}^n d(a_{i-1},a_i) \leqslant C' \cdot \sum_{i=1}^n (\card [a_{i-1},a_i]-1) = C' \cdot d_\mu\relax(a,b).$$

{Second} we will show that $d_\mu\relax$ can be controlled by $d$. For any $a,b\in X$ choose an $(L,C)$-quasi-geodesic $\gamma$ with respect to the metric $d$ connecting them, and take $a_i=\gamma(i)$ for $i=0,1,\ldots,n-1=\lfloor d(a,b) \rfloor$ and $a_n=\gamma(d(a,b))$, which implies $d(a_{i-1},a_i) \leqslant L+C$. By Lemma \ref{finiteness} there exists a constant $C''$ (depending on $L+C$) such that the intervals $[a_{i-1},a_i]$ all have cardinality at most $C''$. Hence we have
$$d_\mu\relax(a,b) \leqslant \sum_{i=1}^n (\card [a_{i-1},a_i]-1) < \sum_{i=1}^n C" \leqslant C" \cdot (d(a,b) + 1).$$
In conclusion we have shown that for any $a,b\in X$,
\[
\frac{1}{C'}\cdot d(a,b) \leqslant d_\mu\relax(a,b) < C" \cdot d(a,b) + C".
\]
This completes the proof.
\end{proof}

Without the assumption that $(X,d)$ is quasi-geodesic,  Theorem \ref{bi-lip equi} fails. Indeed $(X,d)$ can have bounded geometry and $(X,d_\mu)$ have balls of infinite cardinality as the following example shows:

\begin{ex}\label{F infty}
Let $F_\infty$ be the free group on countably many generators $\{g_i\}$. The Cayley graph of $F_\infty$ is a tree and therefore the group admits a median $\mu$. Note that the induced metric $d_\mu$  is the edge-path metric on the Cayley graph. With this metric $F_\infty$ is a coarse median space which does not have bounded geometry since each of the intervals $[e, g_i]$ has cardinality $2$. However, for $d$ a proper left invariant metric on $F_\infty$ (e.g., setting $d(g_i,e)=i$), the space $(F_\infty,d,\mu)$ is again a coarse median space. With this metric the space has bounded geometry. Hence $\mu$ admits two coarse median metrics which are not quasi-isometric.
\end{ex}

\begin{rem}
If we just focus on uniformly discrete metrics, then it is clear that ``quasi-isometry" can be replaced by ``bi-Lipschitz equivalence" in Theorem \ref{bi-lip equi}.
\end{rem}

\section{Coarse median algebras}\label{coarsemedianalg}

We have seen that intervals play a key role in determining the structure and geometry of a coarse median space. In particular, as shown in Theorem \ref{uniquemetricthm}, for a quasi-geodesic coarse median space of bounded geometry the metric is  determined by the interval structure and is therefore redundant in the description. This leads us to the following purely algebraic notion of coarse median algebra.

\begin{defcma}
A  \emph{coarse median algebra} is a ternary algebra $(X,\mu)$ with finite intervals such that:
\begin{itemize}
\item[(M1)\phantom{'}] For all $a,b\in X$,  $\mu(a,a,b)=a$;
\item[(M2)\phantom{'}] For all $a,b,c\in X$, $\mu(a,b,c)=\mu(a,c,b)=\mu(b,a,c)$;
\item[(M3)'] There exists a constant $K\geq 0$ such that for all $a,b,c,d,e\in X$ the cardinality of the interval
\;$\big[\mu(a,b,\mu(c,d,e)),\, \mu(\mu(a,b,c),\mu(a,b,d),e)\big]$\; is at most $K$.
\end{itemize}
\end{defcma}

As remarked in the introduction if we take $K=1$ then this reduces to the classical definition of a discrete median algebra.

\subsection{Bounded geometry for a ternary algebra}

\begin{defn}\label{bounded valency def}
A ternary algebra  $(X,\nu)$  is said to have \emph{bounded valency} if there is a function $\phi:\mathbb R^+\rightarrow \mathbb R^+$ such that for all $x\in X$, we have
\[\sharp \{y\in X\mid \sharp [x,y]\leq R\}\leq \phi(R).\]
\end{defn}

The terminology is motivated by the example of a median graph, where bounded valency in our sense agrees with its classical meaning.

\begin{lem}\label{bdd geo}
Let $(X,\nu)$ be a ternary algebra satisfying (T1) and (T2) together with the finite interval chain condition. Then it has bounded valency \emph{if and only if} the induced metric $d_\nu$ has bounded geometry.
\end{lem}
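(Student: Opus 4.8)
The plan is to unwind both conditions in terms of the quantity $\sharp[x,y]$ and show they match up, using the defining property of the induced metric that $d_\nu(x,y)$ is a minimum over chains of $\sum(\sharp[x_{i-1},x_i]-1)$. The two directions require linking a single interval cardinality $\sharp[x,y]$ to the metric ball radius, and the key observation in both directions is the elementary inequality $d_\nu(x,y)\leqslant \sharp[x,y]-1$, which holds because the trivial one-step chain $x,y$ is one of the competitors in the minimum defining $d_\nu$.

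First I would prove the forward direction: bounded valency $\Rightarrow$ bounded geometry. Fix $r>0$ and a point $x$; I want to bound $\sharp B_{d_\nu}(x,r)$. The problem is that a point $y$ with $d_\nu(x,y)\leqslant r$ need not have $[x,y]$ small — the short chain realizing $d_\nu(x,y)$ may pass through many intermediate points. So I would argue by counting chains: any $y$ in the ball is the endpoint of a chain $x=x_0,x_1,\ldots,x_m=y$ with $\sum_{i=1}^m(\sharp[x_{i-1},x_i]-1)\leqslant r$; since each summand is a non-negative integer and each interval has at least two points when $x_{i-1}\neq x_i$, we have $m\leqslant r$ and $\sharp[x_{i-1},x_i]\leqslant r+1$ for every step. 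Thus $y$ is reachable from $x$ by at most $\lfloor r\rfloor$ steps, each step moving to a point $z$ with $\sharp[\text{(current point)},z]\leqslant r+1$. By bounded valency each such step has at most $\phi(r+1)$ choices, so $\sharp B_{d_\nu}(x,r)\leqslant \sum_{m=0}^{\lfloor r\rfloor}\phi(r+1)^m$, a bound independent of $x$. Hence $d_\nu$ has bounded geometry.

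For the converse, bounded geometry $\Rightarrow$ bounded valency: fix $R>0$, and suppose $\sharp[x,y]\leqslant R$. Then $d_\nu(x,y)\leqslant \sharp[x,y]-1\leqslant R-1$, so $y\in B_{d_\nu}(x,R)$. Therefore $\{y\mid \sharp[x,y]\leqslant R\}\subseteq B_{d_\nu}(x,R)$, and bounded geometry gives a uniform bound $n=n(R)$ on $\sharp B_{d_\nu}(x,R)$; setting $\phi(R)=n(R)$ establishes bounded valency. This direction is essentially immediate once the inequality $d_\nu(x,y)\leqslant\sharp[x,y]-1$ is in hand.

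I expect the only real subtlety to be the chain-counting argument in the forward direction — one must be careful that a ball of radius $r$ in $d_\nu$ is reached by boundedly many steps each of boundedly small interval-cardinality, rather than naively hoping the interval $[x,y]$ itself is small (which is false, as Example \ref{F infty} illustrates). Everything else is bookkeeping with the definition of $d_\nu$ and axioms (T1), (T2), which guarantee $d_\nu$ is genuinely a metric so that balls and the finite interval chain condition behave as expected.
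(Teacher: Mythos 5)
Your proposal is correct and follows essentially the same route as the paper: the converse direction via the inequality $d_\nu(x,y)\leqslant\sharp[x,y]-1$ placing $\{y:\sharp[x,y]\leqslant R\}$ inside a ball, and the forward direction via counting chains of at most $r$ steps, each step having interval cardinality at most $r+1$ and hence at most $\phi(r+1)$ choices. Your extra care about degenerate steps and the geometric-series bound is only a cosmetic refinement of the paper's bound $\phi(R+1)^R$.
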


\begin{proof}
Fix $x\in X$ and $R>1$. Since $d_\nu\relax(x,y) \leq \sharp[x,y]-1$, we have
$$\{y\in X\mid \sharp [x,y]\leq R\} \subseteq B_{R-1}(x).$$
Hence bounded geometry of $d_\nu$ implies bounded valency. On the other hand, suppose $X$ has bounded valency with parameter $\phi$. For any $y\in B_R(x)$ there is an interval chain $x=x_0, \ldots , x_n=y$ with $n\leq R$ and such that each interval $[x_i, x_{i+1}]$ has at most $R+1$ points. It follows that given $x_i$ the number of possible choices for $x_{i+1}$ is at most $\phi(R+1)$, so $B_R(x)$ has cardinality at most $\phi(R+1)^R$.
\end{proof}

\begin{thmcmacmsa}
 \cmacmsthma
\end{thmcmacmsa}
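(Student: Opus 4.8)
The plan is to establish both directions by leveraging the induced metric $d_\mu$ from Section \ref{ternary}. For the ``only if'' direction, suppose $(X,d,\mu)$ is a bounded geometry coarse median space with $\mu$ satisfying (M1), (M2) strictly. Then (M3)' follows immediately from Lemma \ref{finiteness}: the five-point estimate (\ref{five point estimate}) gives $d\big(\mu(a,b,\mu(c,d,e)),\mu(\mu(a,b,c),\mu(a,b,d),e)\big)\leqslant\kappav$, and bounded geometry then bounds the cardinality of the interval between these two points by a constant $K=C(\kappav)$ depending only on $\kappav$ and the geometry. Finite intervals likewise follows from Lemma \ref{finiteness}. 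For bounded valency, I would invoke Lemma \ref{bdd geo}: since $d$ is quasi-geodesic and bounded geometry, $d$ is quasi-isometric to $d_\mu$ by Theorem \ref{uniquemetricthm}, so $d_\mu$ also has bounded geometry, hence $(X,\mu)$ has bounded valency. (One subtlety: Lemma \ref{bdd geo} requires the finite interval chain condition; this holds because $d$ is quasi-geodesic, so any two points are joined by a chain of points at bounded $d$-distance, each consecutive interval being finite by Lemma \ref{finiteness}.)

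For the ``if'' direction, suppose $(X,\mu)$ is a bounded valency coarse median algebra. Finite intervals plus the finite interval chain condition — the latter being automatic since intervals $[a,b]$ are always finite, so one can take the trivial chain $a,b$ — means the induced metric $d_\mu$ is well-defined, and by Lemma \ref{bdd geo} bounded valency gives $d_\mu$ bounded geometry. The main work is to verify that $(X,d_\mu,\mu)$ satisfies the coarse median axioms (C1) and (C2), since (M1) and (M2) are given. For (C2), the coarse four-point condition, I would argue as follows: axiom (M3)' says the interval between $\mu(a,b,\mu(c,d,e))$ and $\mu(\mu(a,b,c),\mu(a,b,d),e)$ has cardinality at most $K$; since $d_\mu(p,q)\leqslant \card[p,q]-1\leqslant K-1$ for any such pair (as $p,q$ are both endpoints of that interval), we get the five-point estimate (\ref{five point estimate}) for $d_\mu$ with constant $\kappav=K-1$, and hence (C2) with $\kappaiv$ the standard constant derived from it. The genuinely delicate point is (C1): I must produce a control function $\rho$ with $d_\mu(\mu(a,b,c),\mu(a,b,c'))\leqslant\rho(d_\mu(c,c'))$.

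For (C1) the strategy is: writing $d_\mu(c,c')=m$, there is a chain $c=z_0,\ldots,z_m=c'$ with $\sum(\card[z_{i-1},z_i]-1)$ minimal, and it suffices to bound $\card[\mu(a,b,z_{i-1}),\mu(a,b,z_i)]$ in terms of $\card[z_{i-1},z_i]$, then sum. So the core estimate is: there is a function $\eta$ such that $\card[\mu(a,b,x),\mu(a,b,y)]\leqslant\eta(\card[x,y])$ for all $a,b,x,y$. To prove this I would show that the map $w\mapsto\mu(a,b,w)$ carries $[x,y]$ into a bounded-cardinality neighbourhood (in the $d_\mu$ sense, using bounded valency to convert cardinality bounds into set-size bounds) of $[\mu(a,b,x),\mu(a,b,y)]$ — using (M3)' repeatedly to show $\mu(a,b,\mu(x,w,y))$ is within a $K$-bounded interval of $\mu(\mu(a,b,x),\mu(a,b,w),y)$, which lies near $[\mu(a,b,x),\mu(a,b,y)]$ after another application — and then that any point of $[\mu(a,b,x),\mu(a,b,y)]$ arises this way up to bounded error. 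Bounded valency then bounds the number of distinct values $\mu(a,b,w)$ can take as $w$ ranges over $[x,y]$ in terms of $\card[x,y]$ and $\phi$, giving $\eta$. I expect \textbf{this (C1) estimate to be the main obstacle}: it is where the purely combinatorial hypothesis (M3)' and bounded valency must be combined carefully, and it mirrors the role of Lemma \ref{coarse iterated estimate}-type control in the metric setting but must be rebuilt from interval cardinalities alone. Once (C1), (C2) hold, $(X,d_\mu,\mu)$ is a coarse median space; it is bounded geometry by the above, and taking $d=d_\mu$ completes the proof.
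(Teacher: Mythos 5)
Your proposal follows essentially the same route as the paper's proof: impose the induced metric $d_\mu$, get bounded geometry from bounded valency via Lemma \ref{bdd geo}, read off (C2) from (M3)$'$ via the bound $d_\mu(p,q)\leq\card[p,q]-1$, establish (C1) by summing a per-link estimate along a minimal interval chain, and obtain the converse from Lemma \ref{finiteness}; moreover the (C1) estimate you flag as the main obstacle is exactly the paper's one-shot argument, namely a single application of (M3)$'$ (with $A=b$, $B=c$, $C=x_i$, $D=x_{i+1}$, $E=z$) showing every point of $[\mu(x_i,b,c),\mu(x_{i+1},b,c)]$ lies in a $K$-bounded interval from some $\mu(w,b,c)$ with $w\in[x_i,x_{i+1}]$, whence bounded valency gives $\card[\mu(x_i,b,c),\mu(x_{i+1},b,c)]\leq\phi(K)\,\card[x_i,x_{i+1}]$. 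The only blemish is your digression on bounded valency in the converse direction: it is a standing hypothesis so needs no proof, and your appeal to quasi-geodesicity and Theorem \ref{uniquemetricthm} is not available since this theorem does not assume the metric is quasi-geodesic.
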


\begin{proof}
$(1)\Rightarrow (2)$: Suppose $(X,\mu)$ is a bounded valency coarse median algebra. We impose the induced metric $d_\mu$, which has bounded geometry by Lemma \ref{bdd geo}. Axiom (M3)' gives us an upper bound on the distance between the two iterated medians, $\mu(a,b,\mu(x,y,z))$ and $\mu(\mu(a,b,x),\mu(a,b,y),z)$, which specialises to the 4-point axiom (C2) by setting $y=b$. It only remains to establish axiom (C1).

To do so, we choose a finite interval chain  $a=x_0, \ldots , x_n=a'$ which realises the distance $d_\mu{}(a, a')$. For each $i$, let $y_i=\mu(x_i,b,c)$ and consider the interval chain $y_0=\mu(a,b,c), \ldots, y_n=\mu(a',b,c)$ which gives an upper bound for $d_\mu{}(\mu(a,b,c),  \mu(a',b,c))$. For each point
\[
\mu(z, y_i, y_{i+1}) = \mu(z,\mu(x_i, b,c),\mu(x_{i+1},b,c))
\]
in the interval $[y_i, y_{i+1}]$, the interval from $\langle z, y_i, y_{i+1}\rangle$ to $\mu(\mu(z,x_i, x_{i+1}),b,c)$ has cardinality at most $K$ by axiom (M3)'. Clearly the set $\{\mu(\mu(z,x_i, x_{i+1}),b,c)\mid z\in X\}$ has cardinality bounded by the cardinality of $[x_i, x_{i+1}]$. So by bounded valency, the interval $[y_i, y_{i+1}]$ has cardinality bounded by $\phi(K) \cdot \card[x_i,x_{i+1}]$. It follows that
\begin{align*}
d_\mu{}(\mu(a,b,c), \mu(a',b,c)) &\jedit{\leqslant \sum_{i=0}^{n-1}(\card[y_i,y_{i+1}]-1)} \leqslant \phi(K)\sum_{i=0}^{n-1}\card[x_i,x_{i+1}]\\
&\leqslant \jedit{2\phi(K)\sum_{i=0}^{n-1}(\card[x_i,x_{i+1}]-1)} = 2\phi(K)d_\mu{}(a,a').
\end{align*}
Therefore $(X,d_\mu{},\mu)$ is a coarse median space.

$(2)\Rightarrow (3)$:  This is trivial.

$(3)\Rightarrow (1)$: Suppose there exists a bounded geometry metric $d$ on $X$ such that $(X,d,\mu)$ is a coarse median space. As remarked after Definition \ref{def:coarse median space}, the five point condition in Equation (\ref{five point estimate}) holds up to some constant $\kappav'$. Hence Lemma \ref{finiteness} implies that (M3)' holds for the constant $\kappav=C(\kappav')$ where $C$ is the function provided therein. Therefore $(X,\mu)$ is a coarse median algebra.
\end{proof}

\begin{rem}
While it is tempting to conflate the ideas of bounded geometry and bounded valency in this context, some care should be taken in the general world of coarse median spaces.  In this context the metric is only loosely associated with the median structure as illustrated by Example \ref{F infty}: the free group $F_\infty$, equipped with a proper left invariant metric and its natural median, is a coarse median space which has bounded geometry but not bounded valency. Of course this example is not quasi-geodesic and as we saw in Theorem \ref{uniquemetricthm} we have much better control in the quasi-geodesic world.
\end{rem}

\subsection{Quasi-geodesic ternary algebras}

\begin{defn}\label{qgdef}
A ternary algebra  $(X,\nu)$ satisfying (T1) and (T2)  is said to be \emph{quasi-geodesic} if there exist constants $L,C>0$ such that for any $a,b\in X$, there exist $a=y_0,\ldots,y_n=b$ with $\sharp [y_j,y_{j+1}] \leqslant C+1$ and $n \leqslant L\sharp [a,b]$.
\end{defn}

Note that the finite interval chain condition is subsumed in this definition so does not need to be imposed separately.

This definition has a natural interpretation in the terms of the following analogue of the classical Rips Complex.

\begin{defn}
For $(X,\nu)$ a ternary algebra, let $P_C(X, \nu)$ denote the simplicial complex in which $\sigma=[x_0,x_1,\ldots,x_n]$ is an $n$-simplex for $x_0,x_1,\ldots,x_n \in X$ if and only if $\sharp[x_i,x_j] \leqslant C+1$.
\end{defn}

Recall for comparison that if $(X,d)$ is a metric space then for $C>0$ the \emph{Rips complex} is the simplicial complex, in which $\sigma=[x_0,x_1,\ldots,x_n]$ is an $n$-simplex for $x_0,x_1,\ldots,x_n \in X$ if and only if $d(x_i,x_j) \leqslant C$.

When the complex $P_C(X, \nu)$ is connected, its vertex set $X$ inherits the edge-path metric which is of course a geodesic metric, denoted  $d_{P_C}$.

\begin{prop}\label{quasi-geodesic}
Let $(X,\nu)$ be a ternary algebra satisfying conditions (T1) and (T2) together with the finite interval chain condition. Let $d_\nu$ denote the induced metric . Then the following are equivalent:
\begin{enumerate}[(1)]
  \item The metric $d_\nu$ is quasi-geodesic;
  \item The ternary algebra $(X,\mu)$ is quasi-geodesic;
  \item There exists $C>0$ such that the complex $P_C(X, \nu)$ is connected and $d_\nu$ is bi-Lipschitz to the edge-path metric $d_{P_C}$ on the complex.
\end{enumerate}
\end{prop}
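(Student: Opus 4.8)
The plan is to prove the cycle of implications $(1)\Rightarrow(3)\Rightarrow(2)\Rightarrow(1)$, exploiting the observation (already implicit in the excerpt, used in the proof of Lemma \ref{bdd geo}) that $d_\nu(x,y)\leqslant \card[x,y]-1$, so the "interval distance" $\card[x,y]-1$ dominates $d_\nu$, while a finite interval chain realising $d_\nu(x,y)$ gives a chain of intervals each of cardinality $\card[x_i,x_{i+1}]\leqslant d_\nu(x,y)+1$ whose lengths sum to $d_\nu(x,y)$. These two inequalities are the whole engine; the work is just matching them against the three reformulations.

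First I would prove $(2)\Rightarrow(1)$. If $(X,\nu)$ is quasi-geodesic with constants $L,C$, then for $a,b\in X$ pick the chain $a=y_0,\ldots,y_n=b$ with $\card[y_j,y_{j+1}]\leqslant C+1$ and $n\leqslant L\card[a,b]$. Build the path in $d_\nu$ that concatenates these vertices: each step costs $d_\nu(y_j,y_{j+1})\leqslant \card[y_j,y_{j+1}]-1\leqslant C$, so $d_\nu(a,b)\leqslant nC\leqslant LC\card[a,b]$. Conversely $\card[a,b]$ is itself controlled: using the finite interval chain realising $d_\nu(a,b)$ one sees $\card[a,b]-1 \leqslant$ (something linear in $d_\nu(a,b)$)? — this direction needs care, so instead I would directly produce a discrete quasi-geodesic for $d_\nu$: subdividing the chain $y_0,\ldots,y_n$ further, replace each $[y_j,y_{j+1}]$ by an interval chain inside it realising $d_\nu(y_j,y_{j+1})$, deloop as in the proofs of Theorem \ref{coarseintervalrank} and Theorem \ref{uniquemetricthm} to make consecutive points distinct, and check that the resulting path, reparametrised by $d_\nu$-arclength, satisfies the quasi-geodesic inequalities with constants depending only on $C$. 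This is exactly the delooping argument already used twice in the paper, so it can be quoted.

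Next, $(1)\Rightarrow(3)$: assume $d_\nu$ is $(L,C)$-quasi-geodesic. I claim $P_{C'}(X,\nu)$ is connected and $d_{P_{C'}}$ is quasi-isometric to $d_\nu$ for a suitable $C'$ depending on $L,C$. Indeed, if $d_\nu(x,y)\leqslant 1$ then $\card[x,y]$ is bounded by a constant (Lemma \ref{finiteness}-type estimate, or directly: a chain realising $d_\nu(x,y)\leqslant 1$ forces $\card[x,y]\leqslant 2$, but in general points at $d_\nu$-distance $\leqslant 1$ have intervals of uniformly bounded cardinality — this uses that the minimum in $d_\nu$ is attained), so set $C'$ to be that bound. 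A $d_\nu$-quasi-geodesic from $a$ to $b$ sampled at integer times gives consecutive points at $d_\nu$-distance $\leqslant L+C$, hence at "interval distance" $\leqslant$ some $C'$, hence joined by an edge in $P_{C'}$; this shows connectivity and $d_{P_{C'}}(a,b)\lesssim d_\nu(a,b)+1$. For the reverse, an edge of $P_{C'}$ joins points with $\card[x_i,x_j]\leqslant C'+1$, hence $d_\nu(x_i,x_j)\leqslant C'$, so $d_\nu\leqslant C' d_{P_{C'}}$. Then $(3)\Rightarrow(2)$ is the easiest: given $C$ as in (3), for $a,b\in X$ take an edge-path $a=z_0,\ldots,z_m=b$ in $P_C(X,\nu)$ realising $d_{P_C}(a,b)$; then $\card[z_j,z_{j+1}]\leqslant C+1$ by definition, and $m=d_{P_C}(a,b)\leqslant Ld_\nu(a,b)+C'\leqslant L(\card[a,b]-1)+C' \leqslant L'\card[a,b]$, which is precisely Definition \ref{qgdef}.

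The main obstacle I anticipate is the small but genuine gap between the two natural "sizes" of an interval pair — the metric $d_\nu(x,y)$ versus $\card[x,y]-1$ — in the backwards directions, i.e. bounding $\card[x,y]$ (not just $d_\nu$) when $d_\nu(x,y)$ is small. A priori a single interval could have cardinality far larger than $d_\nu$ of its endpoints. The resolution is that in each of the three conditions we only ever need to bound cardinalities of intervals between \emph{adjacent} vertices of an already-chosen chain or quasi-geodesic, where the bound is built in; so one never has to bound an arbitrary interval by a function of $d_\nu$. I would make this point explicit at the start to keep the three implications clean, and then the rest is the delooping bookkeeping that the paper has already carried out twice.
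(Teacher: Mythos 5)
Your cycle $(1)\Rightarrow(3)\Rightarrow(2)\Rightarrow(1)$ reverses the paper's $(1)\Rightarrow(2)\Rightarrow(3)\Rightarrow(1)$, and your $(3)\Rightarrow(2)$ step is correct, but the other two implications have genuine gaps, both traceable to the issue you flag at the end and then dismiss. In $(1)\Rightarrow(3)$ you sample a $d_\nu$-quasi-geodesic so that consecutive samples satisfy $d_\nu(x_i,x_{i+1})\leqslant L+C$, and then assert they are at ``interval distance $\leqslant C'$'', i.e.\ joined by a single edge of $P_{C'}$. That needs $\card[x_i,x_{i+1}]$ to be bounded by a function of $d_\nu(x_i,x_{i+1})$, which is exactly the bound that is unavailable in a bare ternary algebra (no bounded valency is assumed; an interval $[u,v]$ can be enormous while a two-step chain through a third point makes $d_\nu(u,v)$ small, and your own observation that $d_\nu\leqslant 1$ forces $\card\leqslant 2$ does not propagate to $d_\nu\leqslant L+C$). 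The repair is the one the paper uses in its $(1)\Rightarrow(2)$: do not try to bound $\card[x_i,x_{i+1}]$ at all, but take a chain \emph{realising} $d_\nu(x_i,x_{i+1})\leqslant L+C$; each interval of that chain automatically has cardinality at most $L+C+1$ and, after discarding repeated points, it has at most $L+C$ steps, so consecutive samples are joined in $P_{C'}$ by a path of uniformly bounded length rather than by an edge. Thus your closing remark that ``the bound is built in'' for adjacent vertices of a quasi-geodesic is precisely what fails: it is built in only for adjacent vertices of a $d_\nu$-minimising chain.

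The more serious gap is $(2)\Rightarrow(1)$. First, applying Definition \ref{qgdef} to the pair $(a,b)$ gives a chain with $n\leqslant L\,\card[a,b]$ steps, and $\card[a,b]$ is not controlled by $d_\nu(a,b)$; subdividing each step by a realising chain shortens the steps but does not reduce this count, so the length of your path is bounded only in terms of $\card[a,b]$, whereas a quasi-geodesic parametrised on $[0,d_\nu(a,b)]$ forces the number of bounded-size steps to be linear in $d_\nu(a,b)$. (The fix is the minimising-chain trick in the opposite order: apply condition (2) to the consecutive points of a chain realising $d_\nu(a,b)$, so the total number of steps is at most $2L\,d_\nu(a,b)$ --- this is exactly what the paper does inside its $(2)\Rightarrow(3)$.) Second, even with the correct count, ``deloop and reparametrise by arclength'' does not verify the lower quasi-geodesic inequality $L^{-1}|s-t|-C\leqslant d_\nu(\gamma(s),\gamma(t))$: a chain of at most $2Ld_\nu(a,b)$ bounded steps with all points distinct can still contain a long detour returning close to its start, and delooping only removes exact repetitions, so the resulting parametrisation need not be a quasi-isometric embedding of the interval. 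The clean way to produce a genuine quasi-geodesic is the paper's route: pass to the geodesic metric $d_{P_C}$ and use that a space quasi-isometric to a geodesic space is quasi-geodesic --- in effect proving $(2)\Rightarrow(3)$ and then $(3)\Rightarrow(1)$. As written, your cycle does not close.
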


\begin{proof}
\emph{(1) $\Rightarrow$ (2)}:
Assume $d_\nu$ is $(L',C')$-quasi-geodesic and $a\neq b\in X$. Let $\gamma: [0,m] \rightarrow X$ be an $(L',C')$-quasi-isometric embedding with $\gamma(0)=a$ and $\gamma(m)=b$. Without loss of generality we may take $m$ to be an integer. Let $x_i=\gamma(i)$ and note that $d_\nu\relax(x_i,x_{i+1})\leq C:=L'+C'$. On the other hand $\frac 1{L'} m -C'\leq d_\nu\relax(a,b)$ so $m\leq L'd_\nu\relax(a,b)+L'C\leq L''d_\nu\relax(a,b)$, where $L''=L'+L'C'$.

Now fix $i$ and take a chain $y_i^0,\dots y_i^{n_i}$ realising the distance from $x_i$ to $x_{i+1}$, i.e.,
$$d_\nu\relax(x_i,x_{i+1})=\sum_{j=0}^{n_i-1}(\sharp [y_i^j,y_i^{j+1}]-1).$$
Since $d_\nu\relax(x_i,x_{i+1})\leq C$ it follows that each set $[y_i^j,y_i^{j+1}]$ has cardinality at most $C+1$.

Furthermore, without loss of generality, we may assume that $y_i^j\neq y_i^{j+1}$ for each $j$, which implies $n_i \leq d_\nu\relax(x_i,x_{i+1})\leq C$. Concatenating these chains gives the required chain from $a$ to $b$. Putting $L=CL''$, the number of terms is:

$$\sum_{i=0}^{m-1} n_i\leq Cm\leq CL''d_\nu\relax(a,b) < L\sharp [a,b].$$

\emph{(2) $\Rightarrow$ (3)}:
Assuming $(X,\mu)$ is $(L,C)$-quasi-geodesic, the Rips complex $P_C(X, \nu)$ is connected. If $d_{P_C}(a,b)=n$ then there exist $x_0=a,x_1,\dots,x_n=b$ with each interval $[x_{i-1},x_i]$ having cardinality at most $C+1$, and hence
$$d_\nu{}(a,b)\leq nC=Cd_{P_C}(a,b).$$

Now we fix $a,b\in X$ and choose mutually different points $a=z_0, z_1, \ldots, z_{k-1}, z_k=b$ in $X$ such that
$$d_\nu\relax(a,b)= \sum_{i=0}^{k-1}(\sharp [z_i,z_{i+1}]-1).$$
For each $i=0,1,\ldots,k-1$, applying condition (2) to $z_i, z_{i+1}$ produces a number $k_i \in \mathbb N$ and points $z_i=w_i^0,w_i^1,\ldots,w_i^{k_i-1}, w_i^{k_i}=z_{i+1}$ in $X$ with $\sharp [w_i^j,w_i^{j+1}] \leqslant C+1$ and $k_i \leqslant L\sharp [z_i, z_{i+1}]$. Since $\sharp [z_i, z_{i+1}] \geq 2$, we have $\sharp [z_i, z_{i+1}] \leq 2(\sharp [z_i, z_{i+1}]-1)$. Hence
$$p:=\sum_{i=0}^{k-1}k_i \leq L\sum_{i=0}^{k-1}\sharp [z_i, z_{i+1}] \leq 2L\sum_{i=0}^{k-1}(\sharp [z_i, z_{i+1}]-1) = 2L d_\nu\relax(a,b).$$
Concatenating these chains provides a chain $a=w_0,w_1,\ldots,w_p=b$ with $\sharp [w_i,w_{i+1}] \leq C+1$ and $p \leq 2L d_\nu\relax(a,b)$, which gives an upper bound
$$d_{P_C}(a,b)\leq p\leq 2L d_\nu\relax(a,b).$$

\emph{(3) $\Rightarrow$ (1)}: As $d_{P_C}$ is geodesic it follows that $d_\nu$ is quasi-geodesic.
\end{proof}

Combining Theorem \ref{unique metric prop} with Proposition \ref{quasi-geodesic} and Theorem \ref{bi-lip equi}, we obtain:

\begin{thm}\label{unique metric prop 2}
A ternary algebra is a bounded valency quasi-geodesic coarse median algebra \emph{if and only if} it admits a bounded geometry, quasi-geodesic coarse median metric. Such a metric, when it exists, is unique up to quasi-isometry.
\end{thm}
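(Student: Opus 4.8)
The plan is to assemble the statement from the three results already in hand: Theorem \ref{unique metric prop} (equivalence of bounded valency coarse median algebras with bounded geometry coarse median metrics), Proposition \ref{quasi-geodesic} (the three equivalent formulations of quasi-geodesicity for a ternary algebra), and Theorem \ref{bi-lip equi} (uniqueness of the coarse median metric up to quasi-isometry for bounded geometry quasi-geodesic coarse median spaces). I would split the biconditional into its two implications and then handle the final uniqueness clause.

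First, suppose $(X,\mu)$ is a bounded valency ternary algebra which is a quasi-geodesic coarse median algebra. Being a coarse median algebra, Theorem \ref{unique metric prop} furnishes a metric $d$ (indeed we may take $d=d_\mu$, as in its proof) so that $(X,d,\mu)$ is a bounded geometry coarse median space. By Lemma \ref{bdd geo} the bounded valency hypothesis is exactly bounded geometry of $d_\mu$, and $d_\mu$ is in the quasi-isometry class of $d$, so all coarse median metrics in play have bounded geometry. It remains to see that $d$ is quasi-geodesic; but ``quasi-geodesic coarse median algebra'' means precisely that the ternary algebra $(X,\mu)$ is quasi-geodesic in the sense of Definition \ref{qgdef}, and by the equivalence (2)$\Leftrightarrow$(1) of Proposition \ref{quasi-geodesic} this is equivalent to $d_\mu$ being quasi-geodesic. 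Hence $(X,d_\mu,\mu)$ is a bounded geometry, quasi-geodesic coarse median space, i.e.\ $\mu$ admits a bounded geometry quasi-geodesic coarse median metric.

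Conversely, suppose $(X,\mu)$ admits a bounded geometry, quasi-geodesic coarse median metric $d$. By the ``if'' direction of Theorem \ref{unique metric prop}, $(X,\mu)$ is a coarse median algebra, and bounded geometry of $d$ together with Theorem \ref{bi-lip equi} (whose proof shows $d_\mu$ is quasi-isometric to $d$, hence $d_\mu$ also has bounded geometry) and Lemma \ref{bdd geo} give bounded valency of $(X,\mu)$. Since $d$ is quasi-geodesic and $d\sim d_\mu$ up to quasi-isometry, $d_\mu$ is quasi-geodesic, so by Proposition \ref{quasi-geodesic} (1)$\Rightarrow$(2) the ternary algebra $(X,\mu)$ is quasi-geodesic; thus $(X,\mu)$ is a quasi-geodesic coarse median algebra.

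Finally, for the uniqueness clause: any bounded geometry quasi-geodesic coarse median metric $d$ on $X$ makes $(X,d,\mu)$ a bounded geometry quasi-geodesic coarse median space, so by Theorem \ref{bi-lip equi} it is quasi-isometric to the canonical metric $d_\mu$; hence any two such metrics are quasi-isometric to each other via $d_\mu$. The only point requiring a little care is the circularity of hypotheses: I would make sure that in invoking Theorem \ref{bi-lip equi} and Theorem \ref{unique metric prop} I only feed them data that has already been verified at that stage of the argument (bounded geometry and quasi-geodesicity of the metric), and that the passage from ``$(X,\mu)$ quasi-geodesic as a ternary algebra'' to ``$d_\mu$ quasi-geodesic as a metric'' is routed through Proposition \ref{quasi-geodesic} rather than assumed. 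I expect this bookkeeping — keeping straight which of the equivalent notions of quasi-geodesicity and which metric in the quasi-isometry class is being used at each invocation — to be the only real obstacle; the mathematical content is entirely contained in the cited results.
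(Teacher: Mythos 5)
Your proposal is correct and is essentially the paper's own argument: the paper derives this theorem precisely by combining Theorem \ref{unique metric prop}, Proposition \ref{quasi-geodesic} and Theorem \ref{bi-lip equi} (with Lemma \ref{bdd geo} translating bounded valency into bounded geometry of $d_\mu$), which is exactly the assembly you carry out. The only difference is that you re-derive bounded valency in the converse direction, which is harmless since it is already part of the hypothesis.
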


\begin{proof}
For a bounded valency quasi-geodesic coarse median algebra $(X,\mu)$, Theorem \ref{unique metric prop} implies that the triple $(X,d_\mu,\mu)$ is a coarse median space of bounded geometry, where $d_\mu$ is the induced metric. Now Proposition \ref{quasi-geodesic} implies that $d_\mu$ is quasi-geodesic.

Conversely, for a bounded geometry quasi-geodesic coarse median metric $(X,d,\mu)$, Theorem \ref{uniquemetricthm} implies that $d$ is quasi-isometric to the induced metric $d_\mu$. Hence Lemma \ref{bdd geo} implies that $(X,\mu)$ has bounded valency and Proposition \ref{quasi-geodesic} implies that $(X,\mu)$ is quasi-geodesic.
\end{proof}

\subsection{The rank of a coarse median algebra}

Motivated by Theorem \ref{hyper rank}, we make the following definition.

\begin{defn}
A coarse median algebra $(X, \mu)$ is said to \emph{have rank at most $n$} if there is a non-decreasing function $\varphi: \Rp \to \Rp$ such that for any $x_1,\ldots,x_{n+1}$ and $p,q \in X$, we have
\begin{equation*}
 \min\{\sharp[p,\mu(x_i,p,q)]:i=1,\ldots,n+1\} \leqslant \varphi(\max\{\sharp[p,\langle x_i,x_j,p\rangle]: i\neq j\}).
\end{equation*}
\end{defn}

\begin{lem}\label{cma rank lemma}
The rank of a bounded valency coarse median algebra $(X,\mu)$ agrees with the rank of the corresponding coarse median space $(X,d_\mu,\mu)$ provided by Theorem \ref{unique metric prop}.
\end{lem}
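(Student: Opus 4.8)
## Proof Plan

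The plan is to show that the two notions of rank coincide by exploiting the fact, established in Theorem \ref{unique metric prop}, that the metric $d_\mu$ turns $(X,\mu)$ into a bounded geometry coarse median space $(X,d_\mu,\mu)$, and then comparing the algebraic rank condition (stated in terms of interval cardinalities $\sharp[p,\mu(x_i,p,q)]$) with the thin $(n{+}1)$-cubes condition of Theorem \ref{hyper rank} (3) (stated in terms of distances $d_\mu(p,\mu(x_i,p,q))$). The key bridge is the estimate, valid in a bounded geometry coarse median space, that $\sharp[a,b]$ and $d_\mu(a,b)$ each bound a function of the other: on one side, $d_\mu(a,b)\leqslant \sharp[a,b]-1$ directly from the definition of the induced metric applied to the trivial chain $a,b$; on the other side, Lemma \ref{finiteness} gives a function $C(\cdot)$ with $\sharp[a,b]\leqslant C(d_\mu(a,b))$, using bounded geometry and the fact that $\rho$ may be taken to have the form $Kt+H_0$ is not needed here — only that $[a,b]\subseteq B(a,\rho(d_\mu(a,b)))$ and balls are finite. (Note $(X,d_\mu,\mu)$ need not be quasi-geodesic, but Theorem \ref{hyper rank} does not require that — only Theorem \ref{growth rank} does — so this is harmless.)

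First I would fix the parameters: let $(X,\mu)$ be a bounded valency coarse median algebra, so by Lemma \ref{bdd geo} the induced metric $d_\mu$ has bounded geometry, and by Theorem \ref{unique metric prop} the triple $(X,d_\mu,\mu)$ is a coarse median space. Let $\nu(r)=C(r)-1$ be a non-decreasing function with $\sharp[a,b]-1\leqslant \nu(d_\mu(a,b))$ for all $a,b$, and recall $d_\mu(a,b)\leqslant \sharp[a,b]-1$.

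For the direction ``$(X,\mu)$ has rank $\leqslant n$ $\Rightarrow$ $(X,d_\mu,\mu)$ has rank $\leqslant n$'': given $x_1,\dots,x_{n+1};p,q$, set $\xi=\max\{d_\mu(p,\langle x_i,x_j,p\rangle):i\neq j\}$. Then $\max\{\sharp[p,\langle x_i,x_j,p\rangle]:i\neq j\}\leqslant \nu(\xi)+1$, so the algebraic rank hypothesis yields some $i$ with $\sharp[p,\mu(x_i,p,q)]\leqslant \varphi(\nu(\xi)+1)$, whence $d_\mu(p,\mu(x_i,p,q))\leqslant \sharp[p,\mu(x_i,p,q)]-1\leqslant \varphi(\nu(\xi)+1)-1$. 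Setting $\varphi'(\xi)=\varphi(\nu(\xi)+1)$ verifies condition (3) of Theorem \ref{hyper rank}, hence $\rank(X,d_\mu,\mu)\leqslant n$. The reverse direction is symmetric: given the thin $(n{+}1)$-cubes function $\varphi'$ for $(X,d_\mu,\mu)$, set $\zeta=\max\{\sharp[p,\langle x_i,x_j,p\rangle]:i\neq j\}$, so $d_\mu(p,\langle x_i,x_j,p\rangle)\leqslant \zeta-1$ for all $i\neq j$; then $\min_i d_\mu(p,\mu(x_i,p,q))\leqslant \varphi'(\zeta-1)$, and for the minimizing $i$, $\sharp[p,\mu(x_i,p,q)]-1\leqslant \nu(d_\mu(p,\mu(x_i,p,q)))\leqslant \nu(\varphi'(\zeta-1))$, so $\varphi(\zeta):=\nu(\varphi'(\zeta-1))+1$ witnesses the algebraic rank $\leqslant n$ condition.

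The only point requiring a little care — and the main obstacle — is confirming that Theorem \ref{hyper rank} genuinely applies to $(X,d_\mu,\mu)$ without a quasi-geodesic hypothesis; reading its statement, it holds for any coarse median space, so this is fine. A secondary subtlety is that the algebraic rank condition is phrased with $\varphi$ non-decreasing on $\Rp$ taking values in $\Rp$, and interval cardinalities are integers $\geqslant 1$; composing with the (non-decreasing, real-valued) functions $\nu$ and $\varphi'$ and adding constants keeps everything non-decreasing and well-defined, so no genuine difficulty arises. This completes the proof.
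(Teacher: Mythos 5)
Your argument is correct and is essentially the paper's own proof: both rest on the two-sided comparison $d_\mu(a,b)\leqslant \sharp[a,b]-1$ (from the trivial chain in the definition of $d_\mu$) and $\sharp[a,b]\leqslant C(d_\mu(a,b))$ (Lemma \ref{finiteness} plus bounded geometry via Lemma \ref{bdd geo}), and then translate between the algebraic rank condition and the thin $(n{+}1)$-cubes condition of Theorem \ref{hyper rank}(3) by composing with these non-decreasing functions in each direction. Your side remarks (that Theorem \ref{hyper rank} needs no quasi-geodesic hypothesis, and that Theorem \ref{unique metric prop} supplies the coarse median space structure on $(X,d_\mu,\mu)$) are also accurate.
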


\begin{proof}
Lemma \ref{finiteness} provides a non-decreasing function $C:\Rp\rightarrow \Rp$ such that
\[
d_\mu{}(a,b) < \sharp [a,b]\leq C(d_\mu{}(a,b)).
\]

If the coarse median algebra $(X,\mu)$ has rank at most $n$, then by definition there exists a non-decreasing $\varphi:\Rp \to \Rp$ such that for any $x_1,\ldots,x_{n+1}$ and $p,q \in X$,
\begin{align*}
  &\min\{d_\mu{}(p,\mu(x_i,p,q)):i=1,\ldots,n+1\}<\min\{\sharp[p,\mu(x_i,p,q)]:i=1,\ldots,n+1\} \\
  &\leqslant \varphi(\max\{\sharp[p,\langle{x_i,x_j,p}\rangle]: i\neq j\})
  \leqslant \varphi(\max\{C(d_\mu{}(p,\langle x_i,x_j,p\rangle)): i\neq j\})\\
  &=\varphi\circ C(\max\{d_\mu{}(p,\langle x_i,x_j,p\rangle): i\neq j\}).
\end{align*}
So by Theorem \ref{hyper rank} the coarse median space $(X,d_\mu, \mu)$ has rank at most $n$.

Conversely if the coarse median space $(X,d_\mu, \mu)$ has rank at most $n$, then by Theorem \ref{hyper rank}: There exists a non-decreasing $\varphi:\Rp\to\Rp$ such that for any $x_1,\ldots,x_{n+1}$ and $p,q \in X$,
      \begin{align*}
        &\min\{\sharp[p,\mu(x_i,p,q)]:i=1,\ldots,n+1\}\leq \min\{C(d_\mu{}(p,\mu(x_i,p,q))):i=1,\ldots,n+1\}\\
        &=C(\min\{d_\mu{}(p,\mu(x_i,p,q)):i=1,\ldots,n+1\})
        \leqslant C\circ\varphi(\max\{d_\mu{}(p,\langle x_i,x_j,p\rangle): i\neq j\})\\
        &\leqslant C\circ\varphi(\max\{\sharp[p,\langle x_i,x_j,p\rangle]: i\neq j\}).
      \end{align*}
So the coarse median algebra $(X,\mu)$ also has rank at most $n$.
\end{proof}

It is interesting to consider this in the context of spaces of rank 1 where we obtain a correspondence between quasi-geodesic, bounded valency coarse median algebras of rank $1$ and bounded geometry geodesic hyperbolic spaces up to quasi-isometry.  One direction is provided by \cite[Lemma 3.1]{bowditch2013coarse} and Theorem \ref{unique metric prop}.  For the converse we have:

\begin{thm}
Let $(X,\mu)$ be a bounded valency, quasi-geodesic coarse median algebra of rank $1$.  Then there exists a metric $d$ such that  $(X,d)$ is a geodesic hyperbolic metric space and its natural coarse median is uniformly close to $\mu$. 
\end{thm}

\begin{proof}
By Proposition \ref{quasi-geodesic}  there exists $C>0$ such that the complex $P_C(X, \nu)$ is connected and $d_\nu$ is bi-Lipschitz to the edge-path metric $d_{P_C}$ on the complex.  We take $d=d_{P_C}$.  Since this is geodesic \cite[Theorem 4.2]{niblo2017four} shows that (quasi)-geodesics in $P_C(X,\mu)$ are close to intervals, and hence by Theorem \ref{hyperbolic char} the slim triangle condition holds.

The natural coarse median of three points $a,b,c$ in this hyperbolic space is chosen from the intersection of $\delta$-neighbourhoods of the geodesics $\vec{ab},\vec{bc},\vec{ca}$ and is therefore in the intersection of $K$-neighbourhoods of the intervals $[a,b],[b,c],[c,a]$ for some fixed $K$.  This is a (uniformly) bounded set containing the original median, hence the new and original medians are uniformly close.
\end{proof}

\appendix
\section{A Categorical viewpoint}
To amplify and clarify the claim that coarse median spaces, coarse interval spaces and coarse median algebras are in some sense the same we will define suitable categories and show that they are equivalent.

\subsection{The coarse median (space) category}\label{The coarse median (space) category}

\begin{defn}\label{coarse median category}
Let $(X,d_X, \mu_X)$ and $(Y,d_Y, \mu_Y)$ be coarse median structures (see Definition \ref{coarse median operator}). A map $f: X \to Y$  is a \emph{$(\rho_+,C)$-coarse median map} if it is a $C$-quasi-morphism as well as a $\rho_+$-coarse map. As usual, we omit mentioning parameters unless we are keeping track of the values.
\end{defn}

\begin{rem}\label{composition of morphisms}
Note that without the assumption of coarseness for the map in this definition, it is not the case that coarse median maps compose to give coarse median maps. The issue is that while the coarse median of the three points $fg(a), fg(b), fg(c)$ is necessarily close to the image under $f$ of the coarse median of $g(a), g(b), g(c)$, without requiring $f$ to be coarse we cannot control the distance between this image and the image under $fg$ of the coarse median $\mu(a,b,c)$.
\end{rem}

Given a set $X$ and a metric space $(Y,d_Y)$  and functions  $f,g: X\rightarrow Y$ we will write $f \sim g$ if $f$ is $s$-close to $g$ for some $s\geq 0$. This is an equivalence relation and the equivalence class of $f$ is denoted by $[f]$. Applying this to  coarse median maps $\mu, \mu'$ on $X$ recovers the notion of uniform closeness discussed in Section 2.

\begin{defn}
The \emph{coarse median category}, denoted $\CM$, is defined as follows:
\begin{itemize}
  \item The objects are coarse median structures $(X,d_X,\mu_X)$;
  \item Given two objects $\mathcal{X}=(X,d_X,\mu_X)$ and $\mathcal{Y}=(Y,d_Y,\mu_Y)$ the morphism set is
$$\mor_{\CM}(\mathcal X,\mathcal Y):=\{\mbox{~coarse median maps~from~}X\mbox{~to~}Y~\}/\sim;$$
  \item Compositions are induced by compositions of maps.
\end{itemize}
The \emph{coarse median space category}, denoted $\CMS$, is the full subcategory whose objects are coarse median \emph{spaces}, i.e.,\ those whose coarse median additionally satisfies axioms (M1) and (M2).
\end{defn}

The objects of $\CM$ are those satisfying Bowditch's original definition \cite[Section 8]{bowditch2013coarse}. We now characterise categorical isomorphisms in a more practical way.

\begin{lem}\label{char for iso in CM}
Let $\mathcal X, \mathcal Y$ be objects in $\CM$ and $[f]\in \mor_\CM(\mathcal X, \mathcal Y)$. Then $[f]$ is an isomorphism in the category $\CM$ \emph{if and only if} $f$ is a coarse equivalence.
\end{lem}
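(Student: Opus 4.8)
The plan is to prove the two directions separately; the ``only if'' direction is formal, while all the content of the ``if'' direction is concentrated in a single chain of estimates.

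\emph{Only if.} Suppose $[f]$ is an isomorphism in $\CM$, with inverse $[g]$ represented by a coarse median morphism $g\colon Y\to X$. By the definition of composition in $\CM$, the identities $[f]\circ[g]=[\id_\mathcal Y]$ and $[g]\circ[f]=[\id_\mathcal X]$ say precisely that $f\circ g\thicksim_C\id_Y$ and $g\circ f\thicksim_C\id_X$ for some constant $C$. Since every coarse median morphism is in particular a coarse map, $f$ and $g$ are coarse maps that are coarse inverse to one another, so $f$ is a coarse equivalence.

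\emph{If.} Suppose $f$ is a coarse equivalence, and choose a $\rho_g$-coarse map $g\colon Y\to X$ with $f\circ g\thicksim_C\id_Y$ and $g\circ f\thicksim_C\id_X$. The whole point is to check that $g$ is also a quasi-morphism: once this is known, $g$ is a coarse median morphism, so $[g]\in\mor_\CM(\mathcal Y,\mathcal X)$, and by Remark \ref{composition of morphisms} the composites $f\circ g$ and $g\circ f$ are again coarse median morphisms; the closeness relations above then give $[f]\circ[g]=[\id_\mathcal Y]$ and $[g]\circ[f]=[\id_\mathcal X]$, so $[f]$ is an isomorphism. To prove that $g$ is a quasi-morphism, fix $a,b,c\in Y$, write $C_f$ for the quasi-morphism constant of $f$, and let $\rho'$ be a control function for the coarse median of $Y$ as in estimate (\ref{C1 est}). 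Applying the quasi-morphism property of $f$ at the points $g(a),g(b),g(c)$, and then combining $f\circ g\thicksim_C\id_Y$ with estimate (\ref{C1 est}) for $Y$, one obtains
$$\langle a,b,c\rangle_Y\thicksim_{\rho'(3C)+C_f}f\big(\langle g(a),g(b),g(c)\rangle_X\big).$$
Applying $g$, using that $g$ is $\rho_g$-bornologous and that $g\circ f\thicksim_C\id_X$, one gets
$$g(\langle a,b,c\rangle_Y)\thicksim_{\rho_g(\rho'(3C)+C_f)+C}\langle g(a),g(b),g(c)\rangle_X,$$
so $g$ is a quasi-morphism with constant $\rho_g(\rho'(3C)+C_f)+C$.

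The only step that is not bookkeeping is this displayed chain of estimates, namely the observation that a coarse inverse of a coarse median morphism is automatically a quasi-morphism; this is exactly the phenomenon already recorded in Remark \ref{dep of para for coarse median iso}, so beyond tracking constants I do not anticipate any real obstacle.
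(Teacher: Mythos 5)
Your proof is correct and follows essentially the same route as the paper: the only substantive point is that a coarse inverse $g$ of a coarse median morphism is automatically a quasi-morphism, which you establish by the same kind of chain of estimates combining the quasi-morphism property of $f$, bornologousness of the median (C1), bornologousness of $g$, and the closeness of $f\circ g$ and $g\circ f$ to the identities. The paper anchors its comparison at $\mu(a,b,c)_X$ with $a=g(x)$, etc., using (C1) in both spaces, whereas you anchor at $f(\langle g(a),g(b),g(c)\rangle_X)$ and pull back by $g$; this is only a minor reorganization, and your constant-tracking is sound.
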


\begin{proof}
Let $\mathcal{X}=(X,d_X,\mu_X)$ and $\mathcal{Y}=(Y,d_Y,\mu_Y)$. Suppose $[f]$ is an isomorphism in the category $\CM$, i.e., there exists another coarse median map $g: Y \to X$ such that $[f][g]=[\id_Y]$ and $[g][f]=[\id_X]$. Hence clearly, $f$ is a coarse equivalence.

On the other hand, suppose $f: X \to Y$ is a $(\rho_+,C)$-coarse median map as well as a $(\rho_+,C)$-coarse equivalence. In other words, there exists a $\rho_+$-coarse map $g: Y\to X$ such that $fg$ and $gf$ are $C$-close to the identities. It suffices to show that $g$ is a coarse median map. For any $x,y,z\in Y$, $fg \thicksim_C \id_Y$ implies that there exist $a,b,c\in X$ such that $f(a)\thicksim_C x$, $f(b)\thicksim_C y$ and $f(c)\thicksim_C z$. Since $g$ is $\rho_+$-bornologous, we have $gf(a) \thicksim_{\rho_+(C)}g(x)$, $gf(b) \thicksim_{\rho_+(C)}g(y)$ and $gf(c) \thicksim_{\rho_+(C)}g(z)$. Let $\rho_X,\rho_Y$ be the uniform bornology parameters of $\mathcal X,\mathcal Y$ provided by (C1). Then we have
$$\mu({g(x),g(y),g(z)})_X \thicksim_{\rho_X(3\rho_+(C))}\mu({gf(a),gf(b),gf(c)})_X \thicksim_{\rho_X(3C)}\mu(a,b,c)_X.$$
We also have
$$g(\mu(x,y,z)_Y)\thicksim_{\rho_+(\rho_Y(3C))}g(\mu({f(a),f(b),f(c)})_Y)\thicksim_{\rho_+(C)}gf(\mu(a,b,c)_X)
\thicksim_C \mu(a,b,c)_X.$$
Combining these, we have
$$\mu({g(x),g(y),g(z)})_X\thicksim_{C'}g(\mu(x,y,z)_Y)$$
for $C'=\rho_X(3\rho_+(C))+\rho_X(3C)+\rho_+(\rho_Y(3C))+\rho_+(C)+C$.
\end{proof}

\begin{rem}\label{dep of para for coarse median iso}
Recall from Definition \ref{cms isom} that a $(\rho_+,C)$-coarse median isomorphism $f$ is a $(\rho_+,C)$-coarse median map and a $(\rho_+,C)$-coarse equivalence. Hence the previous lemma states that such an $f$ is a $(\rho_+,C)$-coarse median isomorphism \emph{if and only if} it represents a categorical isomorphism.
Any $(\rho_+,C)$-coarse inverse $g$ for $f$ is a $(\rho_+,C')$-coarse median isomorphism with the constant $C'$ depending only on $\rho_+,C$ and parameters of $X,Y$. And in this case, $[g]$ is a categorical inverse of $[f]$.
\end{rem}

We now discuss the relationship between the categories of coarse median spaces $\CMS$ and coarse median structures $\CM$.
\begin{prop}\label{cms equiv cm}
The inclusion functor $\iota_\mathcal{M}:\CMS \hookrightarrow \CM$ gives an equivalence of categories.
\end{prop}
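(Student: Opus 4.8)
The plan is to use the standard criterion that a functor is an equivalence of categories exactly when it is fully faithful and essentially surjective. Since $\CMS$ is by construction a \emph{full} subcategory of $\CM$ — for objects $\mathcal X,\mathcal Y$ of $\CMS$ the morphism sets $\mor_{\CMS}(\mathcal X,\mathcal Y)$ and $\mor_{\CM}(\mathcal X,\mathcal Y)$ coincide — the inclusion functor $\iota_\mathcal{M}$ is automatically fully faithful, so I would immediately reduce the whole proposition to essential surjectivity: the claim that every object of $\CM$ is isomorphic in $\CM$ to one coming from $\CMS$.

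For essential surjectivity, given an arbitrary object $\mathcal X=(X,d,\mu)$ of $\CM$, I would appeal to Remark \ref{M1M2 remark} (Bowditch's observation) to produce a coarse median $\mu'$ on $(X,d)$ that additionally satisfies the localisation and symmetry axioms (M1), (M2) and is uniformly close to $\mu$, say $\mu(a,b,c)'\thicksim_C\mu(a,b,c)$ for all $a,b,c\in X$ and some constant $C>0$; then $\mathcal X'=(X,d,\mu')$ is an object of $\CMS$. The key point is that the identity map $\id_X$, regarded as a map $\mathcal X\to\mathcal X'$, represents an isomorphism in $\CM$: since $\mathcal X$ and $\mathcal X'$ have the same underlying metric space, $\id_X$ is an isometry, hence a coarse equivalence; and it is a $C$-quasi-morphism in the sense of Definition \ref{coarse median category}, because the required inequality $\langle\id_X(a),\id_X(b),\id_X(c)\rangle_{\mathcal X'}\thicksim_C\id_X(\mu(a,b,c))$ is exactly the closeness $\mu(a,b,c)'\thicksim_C\mu(a,b,c)$ just arranged. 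So $\id_X$ is a coarse median morphism that is also a coarse equivalence, and Lemma \ref{char for iso in CM} then gives that $[\id_X]$ is an isomorphism in $\CM$. This exhibits $\mathcal X$ as isomorphic to $\mathcal X'$, which lies in the image of $\iota_\mathcal{M}$, completing essential surjectivity and hence the proof.

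I do not expect any real obstacle: the substance is entirely supplied by Bowditch's observation that a coarse median can be perturbed by a bounded amount so as to satisfy (M1), (M2) (Remark \ref{M1M2 remark}) and by the identification of categorical isomorphisms in $\CM$ with coarse equivalences (Lemma \ref{char for iso in CM}). The one subtlety to keep in mind is that morphisms in $\CM$ are by definition coarse maps (cf.\ Remark \ref{composition of morphisms}), so one must check — as above — that $\id_X$ really does qualify as a morphism $\mathcal X\to\mathcal X'$ before invoking Lemma \ref{char for iso in CM}, but this is immediate since $\id_X$ is an isometry.
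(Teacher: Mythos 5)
Your proof is correct and follows essentially the same route as the paper: reduce to essential surjectivity via fullness of the subcategory, perturb $\mu$ to a uniformly close $\mu'$ satisfying (M1), (M2) using Remark \ref{M1M2 remark}, and observe that the identity map is a coarse median isomorphism, hence a categorical isomorphism in $\CM$. Your additional verification that $\id_X$ is indeed a quasi-morphism and coarse equivalence before invoking Lemma \ref{char for iso in CM} just makes explicit what the paper leaves implicit.
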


\begin{proof}
As $\CMS$ is a full subcategory of $\CM$, it suffices to show that each object in $\CM$ is isomorphic to an object of $\CMS$ (see for example \cite[Theorem 1, Section IV.4]{mac2013categories} or \cite[Proposition 1.3, Chapter 1]{jacobson2012basic}). For $(X,d,\mu)$ an object in $\CM$, as remarked before Definition \ref{coarse median operator}, $\mu$ is uniformly close to another coarse median $\mu'$ satisfying (M1) and (M2). The identity map $\mathrm{Id}_X$ is then a coarse median map from $(X,d,\mu')$ to $(X,d,\mu)$ which provides the required isomorphism in $\CM$.
\end{proof}

\subsection{The coarse interval (space) category}

We will define the coarse interval category and the coarse interval space category in this subsection. As we did in the coarse median case, let us start with morphisms.

\begin{defn}
Let $(X,d_X, [\cdot, \cdot]_X)$ and $(Y,d_Y,[\cdot, \cdot]_Y)$ be  coarse interval structures (see Definition \ref{def: coarse interval structure}). A map $f: X \to Y$ is said to be a \emph{$(\rho_+,C)$-coarse interval map} if $f$ is a $\rho_+$-coarse map and for any $a,b\in X$, $f([a,b]) \subseteq \N_C([f(a),f(b)])$. As usual, we omit parameters unless they are required.

Given coarse interval maps $f,g$ from $X$ to $Y$, we introduce the notation $f \sim g$ if $f$ is $s$-close to $g$ for some $s$. This is an equivalence relation and we denote the equivalence class of $f$ by $[f]$.
\end{defn}

\begin{defn}
The \emph{coarse interval category}, denoted $\CI$\!, is defined as follows:
\begin{itemize}
  \item  The objects are coarse interval structures $(X,d_X,[\cdot,\cdot]_X)$;
  \item Given two objects: $\mathcal{X}=(X,d_X,[\cdot,\cdot]_X)$ and $\mathcal{Y}=(Y,d_Y,[\cdot,\cdot]_Y)$, the morphism set is
  $$
  \mor_{\CI}(\mathcal X,\mathcal Y):=\{\mbox{~coarse~interval~maps~from~}X\mbox{~to~}Y~\}/\sim;
  $$
  \item Compositions are induced by compositions of maps.
\end{itemize}
The \emph{coarse interval space category}, denoted $\CIS$, is the full subcategory whose objects are coarse interval \emph{spaces}, i.e., those satisfying the stronger axioms (I1)$\sim$(I3).
\end{defn}

As in Lemma \ref{char for iso in CM}, we can characterise categorical isomorphisms in a more practical way. Let us start with the following observation:

\begin{lem}\label{hausdorff control}
Let $(X,d_X,[\cdot,\cdot]_X),(Y,d_Y,[\cdot,\cdot]_Y)$ be  coarse interval structures and $f:X \to Y$ be a coarse interval map as well as a coarse equivalence. Then there exists some constant $D>0$ such that for any $a,b\in X$,
$$d_H(f([a,b]),[f(a),f(b)])\leqslant D.$$
\end{lem}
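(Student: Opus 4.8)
The statement asserts that a coarse interval morphism which is also a coarse equivalence has the property that $f([a,b])$ and $[f(a),f(b)]$ are uniformly Hausdorff close, i.e.\ we need the reverse inclusion $[f(a),f(b)] \subseteq \N_D(f([a,b]))$ beyond the inclusion $f([a,b])\subseteq \N_C([f(a),f(b)])$ built into the definition of morphism. The natural plan is as follows.

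\textbf{Setup.} Let $f$ be a $(\rho_+,C)$-coarse interval morphism and a $(\rho_+,C)$-coarse equivalence with coarse inverse $g$, so that $fg\thicksim_C \id_Y$ and $gf\thicksim_C\id_X$, and $g$ is also $\rho_+$-bornologous. Fix $a,b\in X$ and let $w\in[f(a),f(b)]$; I want to produce a point of $f([a,b])$ within a controlled distance of $w$. Since $fg\thicksim_C\id_Y$ we have $w\thicksim_C fg(w)$, so it suffices to show $g(w)$ is within a bounded distance of $[a,b]$, since then $fg(w)\in f(\N_{\mathrm{bdd}}([a,b]))\subseteq \N_{\rho_+(\mathrm{bdd})+C}([f(a),f(b)])$ — wait, that is the wrong direction. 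Let me reorganise: it suffices to show $g(w)\in\N_{R}([a,b])$ for a uniform $R$, because then by axiom (I2)$'$/(I2) together with the structure, $\mu(a,g(w),b)$ (or a nearby interval point) is $\g(R)$-close to $g(w)$ (using something like Lemma \ref{estm1} if we pass to the induced-operator language, or directly), hence $w\thicksim_C fg(w)$ is close to $f$ of a point of $[a,b]$. So the crux is: \emph{show $g$ maps $[f(a),f(b)]$ uniformly close to $[a,b]$.}

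\textbf{Key step.} To see $g(w)\in\N_R([a,b])$: since $g$ is a coarse map and $f$ is a coarse interval morphism, I want $g$ to be, at least coarsely, an interval morphism $\I_Y\to\I_X$ — this is the analogue of the fact (Remark \ref{dep of para for coarse median iso}, proved via Lemma \ref{char for iso in CM}) that a coarse inverse of a coarse median morphism is automatically a quasi-morphism. Concretely, given $w\in[f(a),f(b)]$, write (using that $fg\thicksim_C\id_Y$) $f(a)\thicksim_C fg(f(a))$ — trivially $g(f(a))\thicksim_C a$ by $gf\thicksim_C\id_X$, so $f(a)$ is $\rho_+(C)$-close to $f(g(f(a)))$ hmm; cleaner: directly, $w\in[f(a),f(b)]$, and $gf\thicksim_C\id_X$ gives $gf(a)\thicksim_C a$, $gf(b)\thicksim_C b$. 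Consider the interval $[gf(a),gf(b)]$: by (I1)$'$ and (I2)$'$ applied with the $C$-perturbations of the endpoints, $[gf(a),gf(b)]\subseteq\N_{\f(C)}([a,gf(b)])\subseteq\N_{\f^2(C)}([a,b])$ (and symmetrically) — so $[gf(a),gf(b)]$ is $\f^2(C)$-close to $[a,b]$. Now I must show $g(w)$ is close to $[gf(a),gf(b)]$, i.e.\ that $g$ sends a point of an interval to a point close to the interval spanned by the images of the endpoints. This is exactly where I invoke: $f$ being an interval morphism means $f$ of an interval lands near the interval of the images; to get the \emph{same} for $g$, compose — $fg(w)\thicksim_C w\in[f(a),f(b)]$, and since $fg(a)\thicksim_{\rho_+(C)}f(a)$, $fg(b)\thicksim_{\rho_+(C)}f(b)$, we get (by (I2)/(I1) fattening) $w\in\N_{\f(\rho_+(C)+\text{stuff})}([fg(a),fg(b)])$. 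Then apply the interval morphism property of $f$ to the interval $[g(a'),g(b')]$... The honest route is: argue $g$ is a coarse interval morphism by the mirror of the proof of Lemma \ref{char for iso in CM}, and then both $f$ and $g$ send intervals into neighbourhoods of intervals, which combined with $fg\thicksim\id$, $gf\thicksim\id$ and axioms (I1)$'$--(I3)$'$ yields the two-sided Hausdorff bound.

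\textbf{Finishing and main obstacle.} Once $g([f(a),f(b)])\subseteq\N_{R}([a,b])$, apply $f$: $f(\N_R([a,b]))$; for each $z\in[a,b]$ with $g(w)\thicksim_R z$ we get $w\thicksim_C fg(w)\thicksim_{\rho_+(R)}f(z)\in f([a,b])$, so $w\in\N_{\rho_+(R)+C}(f([a,b]))$, giving $[f(a),f(b)]\subseteq\N_{D}(f([a,b]))$ with $D=\rho_+(R)+C$; together with the built-in $f([a,b])\subseteq\N_C([f(a),f(b)])$ this gives $d_H(f([a,b]),[f(a),f(b)])\leqslant\max\{C,D\}$. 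I expect the main obstacle to be the first key step — establishing that $g$ behaves like a coarse interval morphism on intervals — since, just as in Remark \ref{composition of morphisms} and Lemma \ref{char for iso in CM}, this requires carefully threading the bornology control function $\rho_+$ of $g$ through the interval axioms (I1)$'$--(I3)$'$ and keeping all the constants uniform in $a,b$; the rest is bookkeeping with $\f,\g$ and the closeness constants.
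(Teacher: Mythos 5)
Your reduction and your closing bookkeeping are fine: if one knows that $g$ carries $[f(a),f(b)]$ into a uniform neighbourhood of $[a,b]$, then $w\thicksim_C fg(w)$ together with the bornology of $f$ gives $[f(a),f(b)]\subseteq\N_D(f([a,b]))$, and combined with the defining inclusion $f([a,b])\subseteq\N_C([f(a),f(b)])$ this is the Hausdorff bound. The gap is that the key step is never proved, and the mechanism you propose for it does not work. You want to show that the coarse inverse $g$ is (coarsely) an interval morphism ``by the mirror of the proof of Lemma \ref{char for iso in CM}'', but that proof has no mirror here. In the median case the quasi-morphism hypothesis is a \emph{two-sided} closeness of points, $f(\mu(a,b,c)_X)\thicksim_C\langle f(a),f(b),f(c)\rangle_Y$, and the crucial move is to rewrite the median of the image points as $f$ of a median and then apply $gf\thicksim_C\id_X$. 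The interval hypothesis is only the \emph{one-sided} inclusion $f([a,b])\subseteq\N_C([f(a),f(b)])$; the analogous move for intervals would be ``every point of $[f(a),f(b)]$ lies close to $f([a,b])$'', which is precisely the reverse inclusion the lemma asserts. Your own computation shows where this bites: after the (correct) fattening estimates you arrive at a point $u$ of $[f(a),f(b)]$ (or of the nearby interval $[fg(a),fg(b)]$) and need $g(u)$ to be close to $[a,b]$ --- the original problem again. Note also that in the paper the dependency runs the other way: Lemma \ref{hausdorff control} is proved first and is then the ingredient used to show that a coarse inverse of a coarse interval morphism is again an interval morphism (Lemma \ref{char for iso in CI}), so as written your plan is circular.

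The missing idea is a three-interval argument exploiting the bounded-diameter clause of (I3)', and it needs nothing about $g$ beyond $fg\thicksim_C\id_Y$. Given $z\in[f(a),f(b)]$, set $c:=g(z)$, so $f(c)\thicksim_C z$. By Remark \ref{ends close to intervals} the endpoint $f(c)$ lies within $3\kappao$ of $[f(b),f(c)]$ and of $[f(c),f(a)]$, so (enlarging $C$ so that $C\geqslant 3\kappao$) $f(c)\in\N_C([f(a),f(b)])\cap\N_C([f(b),f(c)])\cap\N_C([f(c),f(a)])$. On the other hand choose $m\in\N_\kappao([a,b])\cap\N_\kappao([b,c])\cap\N_\kappao([c,a])$, which is non-empty by (I3)' for $\I_X$; applying the forward inclusion $f([x,y])\subseteq\N_C([f(x),f(y)])$ to the three intervals $[a,b],[b,c],[c,a]$ and the bornology of $f$ shows that $f(m)$ lies in the same triple intersection enlarged to radius $\rho_+(\kappao)+C$. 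The diameter bound in (I3)' for $\I_Y$ now forces $f(m)$ and $f(c)$ to be uniformly close, hence $z\thicksim_C f(c)$ is uniformly close to $f(m)\in\N_{\rho_+(\kappao)}(f([a,b]))$, which is the reverse inclusion. (Once the lemma is in hand, the fact that $g$ is a coarse interval morphism does follow, as in Lemma \ref{char for iso in CI}; a direct proof of that fact would again require this triple-intersection argument, not a transcription of the median one.)
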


\begin{proof}
Suppose $f$ is a $(\rho_+,C)$-coarse interval map with $C\geqslant 3\kappao$ where $\kappao$ is the parameter of $[\cdot,\cdot]_Y$ given in axioms (I1)' and (I3)' and $g:Y\to X$ is a $\rho_+$-bornologous map such that $f\circ g\thicksim_C \id_{Y}$ and $g\circ f \thicksim_C \id_{X}$. For $z\in [f(a),f(b)]$ and for $c=g(z)$, we have $f(c)\thicksim_C z$. Hence by Remark \ref{ends close to intervals} as $C\geqslant 3\kappao$, we have
$$f(c) \in \N_C([f(a),f(b)]) \cap \N_C([f(b),f(c)]) \cap \N_C([f(c),f(a)]).$$

On the other hand, since $f$ is a $(\rho_+,C)$-coarse interval map, we have
\begin{eqnarray*}
  f([a,b]\cap [b,c]\cap [c,a]) &\subseteq& f([a,b])\cap f([b,c]) \cap f([c,a]) \\
   &\subseteq& \N_C([f(a),f(b)]) \cap \N_C([f(b),f(c)]) \cap \N_C([f(c),f(a)]).
\end{eqnarray*}
This has diameter at most $C'$ for some constant $C'$ by axiom (I3)'. Hence there exists $c'\in [a,b]$ such that $f(c)\thicksim_{C'}f(c')$ which implies that $z\thicksim_C f(c)\thicksim_{C'}f(c')$, i.e., $z\in \N_{C+C'}(f([a,b]))$. Taking $D=C+C'$ we have $d_H(f([a,b]),[f(a),f(b)])\leqslant D$ as required.
\end{proof}

Now we give a characterisation of categorical isomorphism in $\CI$ and $\CIS$.

\begin{lem}\label{char for iso in CI}
Let $(X,d_X,[\cdot,\cdot]_X)$ and $(Y,d_Y,[\cdot,\cdot]_Y)$ be two coarse interval structures and $f:X \to Y$ be a coarse interval map. Then $[f]$ is an isomorphism in $\CI$ \emph{if and only if} $f$ is a coarse equivalence. The same holds in $\CIS$ by restricting to this full subcategory.
\end{lem}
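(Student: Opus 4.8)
The plan is to mirror the proof of Lemma \ref{char for iso in CM} for the coarse median category, replacing ``quasi-morphism'' estimates with the interval-containment estimates afforded by Lemma \ref{hausdorff control}. The forward direction is immediate: if $[f]$ is a categorical isomorphism then there is a coarse interval morphism $g\colon Y\to X$ with $[f][g]=[\id_Y]$ and $[g][f]=[\id_X]$, so $f$ is a coarse equivalence (with coarse inverse $g$). The substance is in the converse, and there the key point is to check that a coarse inverse $g$ of a coarse interval morphism $f$ is \emph{automatically} a coarse interval morphism; then $[g]$ serves as a categorical inverse of $[f]$.

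For the converse I would argue as follows. Suppose $f\colon X\to Y$ is a $(\rho_+,C)$-coarse interval morphism which is also a coarse equivalence, with $\rho_+$-bornologous coarse inverse $g\colon Y\to X$ satisfying $f\circ g\thicksim_C\id_Y$ and $g\circ f\thicksim_C\id_X$. Since $g$ is bornologous and proper (being a coarse equivalence, hence a coarse map), it remains only to verify the interval condition $g([x,y])\subseteq\N_{C'}([g(x),g(y)])$ for a uniform $C'$. Fix $x,y\in Y$ and a point $w\in[x,y]$; put $a=g(x)$, $b=g(y)$. First apply Lemma \ref{hausdorff control} to $f$: there is a constant $D$, depending only on the parameters, with $d_H(f([a,b]),[f(a),f(b)])\leqslant D$. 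Since $f(a)=fg(x)\thicksim_C x$ and $f(b)=fg(y)\thicksim_C y$, axioms (I1)--(I2) give $[f(a),f(b)]\subseteq\N_{\f_Y(C)+\text{const}}([x,y])$ and conversely $[x,y]\subseteq\N_{\f_Y(C)+\text{const}}([f(a),f(b)])$ (using here that $X,Y$ are coarse interval \emph{spaces}, so $\f_Y,\g_Y$ exist and the $\kappao$ terms drop out). Hence $w$ is within a uniform distance of $f([a,b])$, so $w\thicksim_{D'}f(w')$ for some $w'\in[a,b]=[g(x),g(y)]$ and a uniform $D'$. Finally $g(w)\thicksim_{\rho_+(D')}gf(w')\thicksim_{C}w'\in[g(x),g(y)]$, so $g(w)\in\N_{\rho_+(D')+C}([g(x),g(y)])$, giving the desired uniform $C'$.

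The main obstacle — the one place where the hypothesis that we are in $\CIS$ rather than $\CI$ is genuinely used — is the comparison between $[f(a),f(b)]$ and $[x,y]$ when $f(a),f(b)$ are merely \emph{close} to $x,y$. In the strict axioms (I1)--(I3) the interval $[x,x]$ equals $\{x\}$, so a point $C$-close to $x$ lies in $\N_C([x,y])$ directly, and (I2) then propagates the bound; in a general coarse interval \emph{structure} one would have to juggle the extra $\kappao$-fattening from (I1)'--(I3)', but since the hypothesis restricts us to coarse interval spaces this is clean. Once this closeness-transfer step and the invocation of Lemma \ref{hausdorff control} are in place, the chain of estimates above is routine, and I would simply record the resulting constant $C'$ in terms of $\rho_+$, $C$, $D$, and the parameters $\f_Y,\g_Y$ of $\I_Y$ without belabouring it. The statement for $\CIS$ is then exactly the statement for $\CI$ restricted to the full subcategory, so no separate argument is needed; for completeness I would remark that the same proof works verbatim in $\CI$ provided one carries the $\kappao$ terms through, which is why the lemma is stated for both.
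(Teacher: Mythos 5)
Your proposal is correct and follows essentially the same route as the paper: the forward direction is immediate, and the converse is proved by showing the coarse inverse $g$ automatically satisfies the interval condition, using the closeness $f\circ g\thicksim_C\id_Y$ together with axioms (I1)/(I2) to transfer $[x,y]$ into a neighbourhood of $[f(g(x)),f(g(y))]$, then Lemma \ref{hausdorff control} to replace $[f(a),f(b)]$ by $f([a,b])$, and finally bornologousness of $g$ and $g\circ f\thicksim_C\id_X$. The only differences are cosmetic (pointwise tracing versus set-level neighbourhood inclusions, and the paper runs the estimates directly with the primed axioms so no special role is played by being in $\CIS$), so no changes are needed.
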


\begin{proof}
Suppose $[f]$ is an isomorphism in $\CI$, i.e., there exists another coarse interval map $g: Y \to X$ such that $[f][g]=[\id_Y]$ and $[g][f]=[\id_X]$. Hence clearly, $f$ is a coarse equivalence.

On the other hand, suppose $f$ is a $(\rho_+,C)$-interval morphism and $g:Y\to X$ is $\rho_+$-coarse such that $fg\thicksim_C \id_{Y}$ and $gf \thicksim_C \id_{X}$. It suffices to show that there exists some constant $C'>0$ such that for any $z,w\in Y$, $g([z,w]) \subseteq \N_{C'}(g(z),g(w))$. Since $fg\thicksim_C \id_{Y}$, we have $z \thicksim_C f(z')$ and $w \thicksim_C f(w')$ for $z'=g(z)$ and $w'=g(w)$. By axioms (I1)' and (I2), there exists some constant $K>0$ such that $[z,w] \subseteq \mathcal N_K([f(z'),f(w')]$. Hence
$$g([z,w]) \subseteq g(\mathcal N_K([f(z'),f(w')])) \subseteq \N_{\rho_+(K)}( g([f(z'),f(w')]) ).$$
By Lemma \ref{hausdorff control}, there exists a constant $D>0$ such that $[f(z'),f(w')]\subseteq \N_D(f[z',w'])$, which implies that
\begin{eqnarray*}
  g([z,w]) &\subseteq& \N_{\rho_+(K)}( g([f(z'),f(w')]) ) \subseteq \N_{\rho_+(K)}(g(\N_D(f([z',w'])))) \\
   &\subseteq& \N_{\rho_+(K)+\rho_+(D)}(gf([z',w'])) \subseteq \N_{C'}([z',w'])=\N_{C'}([g(z),g(w)]),
\end{eqnarray*}
where $C'=\rho_+(K)+\rho_+(D)+C$ depends only on $\rho_+,C$ and parameters of $[\cdot,\cdot]_X$ and $[\cdot,\cdot]_Y$.
\end{proof}

\begin{prop}\label{ci equiv cis}
The inclusion functor $\iota_{\I}:\CIS \hookrightarrow \CI$ gives an equivalence of categories.
\end{prop}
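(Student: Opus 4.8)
The plan is to follow the proof of Proposition \ref{cms equiv cm} verbatim in the interval setting. Since $\CIS$ is by definition a \emph{full} subcategory of $\CI$, the inclusion $\iota_\I$ is automatically full and faithful, so it suffices to show that $\iota_\I$ is essentially surjective: that every object of $\CI$ is isomorphic in $\CI$ to an object of $\CIS$.

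So I would fix an object $(X,d,\I)$ of $\CI$ and appeal to Lemma \ref{coarse interval close} to produce a coarse interval structure $\I'$ on $(X,d)$ which is uniformly close to $\I$ and satisfies the stronger axioms (I1)$\sim$(I3); thus $(X,d,\I')$ is an object of $\CIS$. Fixing $C>0$ with $d_H([a,b],[a,b]')\leqslant C$ for all $a,b\in X$, the next step is to check that the identity map, viewed once as $\id_X\colon(X,d,\I')\to(X,d,\I)$ and once as $\id_X\colon(X,d,\I)\to(X,d,\I')$, is a coarse interval morphism in each direction: the bornologous condition is trivial, and uniform closeness gives $\id_X([a,b]')=[a,b]'\subseteq\N_C([a,b])=\N_C([\id_X(a),\id_X(b)])$, and symmetrically with the roles of $\I$ and $\I'$ reversed. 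Since the two composites are literally the identity, $[\id_X]$ is then an isomorphism in $\CI$ with inverse $[\id_X]$; alternatively, once one knows $\id_X$ is a coarse interval morphism and a coarse equivalence, one may invoke Lemma \ref{char for iso in CI}.

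This exhibits, for each object of $\CI$, an isomorphic object of $\CIS$, which is exactly essential surjectivity, so $\iota_\I$ is an equivalence of categories. I do not expect any real obstacle here: the substantive content is already packaged into Lemma \ref{coarse interval close} (existence of a nearby structure satisfying the strong interval axioms) and into Lemma \ref{hausdorff control} and Lemma \ref{char for iso in CI} (identification of categorical isomorphisms with coarse equivalences). The only thing to be careful about is that uniform closeness of the two interval structures is precisely what makes the identity map a morphism \emph{in both directions}, which is what supplies the inverse.
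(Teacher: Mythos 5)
Your proposal is correct and matches the paper's (largely omitted) argument exactly: the paper also reduces to essential surjectivity via fullness of $\CIS$ in $\CI$, invokes Lemma \ref{coarse interval close} to produce a uniformly close structure $\I'$ satisfying (I1)$\sim$(I3), and observes that the identity map then gives an isomorphism in $\CI$, just as in the proof of Proposition \ref{cms equiv cm}. Your explicit check that uniform closeness makes $\id_X$ a coarse interval morphism in both directions is precisely the detail the paper leaves to the reader.
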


\begin{proof}
This follows from Lemma \ref{coarse interval close}. The argument is similar to the proof of  Proposition \ref{cms equiv cm}, hence omitted.
\end{proof}

\subsection{Equivalence of the coarse median and coarse interval categories}
Now we construct functors connecting categories $\CM(\mathcal{S})$ and $\CI(\mathcal{S})$, and show that these functors give equivalences of categories. Theorem \ref{induce cm and ci} (1) offers a functor from $\CM$ to $\CI$ as follows:

\begin{lem}\label{morphisms}
Let $(X,d_X,\mu_X)$ and $(Y,d_Y,\mu_Y)$ be objects in the category $\CM$ and $f:X\rightarrow Y$ be a $(\rho_+,C)$-coarse median map. Suppose $(X,d_X,[\cdot,\cdot]_X)$ and $(Y,d_Y,[\cdot,\cdot]_Y)$ are the induced coarse interval structures. Then $f$ is a $(\rho_+,C)$-coarse interval map from $(X, d_X, [\cdot,\cdot]_X)$ to $(Y, d_Y, [\cdot,\cdot]_Y)$.
\end{lem}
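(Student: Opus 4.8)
The plan is to simply unwind the two relevant definitions and observe that the quasi-morphism condition does all the work. Recall that the induced coarse interval structure on a coarse median space $(Z,d_Z,\mu_Z)$ is given by $[z,z']=\{\mu_Z(z,x,z')\mid x\in Z\}$, and that to be a $(\rho_+,C)$-coarse interval morphism $f$ must be $\rho_+$-coarse and satisfy $f([a,b])\subseteq\N_C([f(a),f(b)])$ for all $a,b$. Since $f$ is assumed to be a $(\rho_+,C)$-coarse median morphism, it is already a $\rho_+$-coarse map, so the only thing left to verify is the interval containment.

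First I would fix $a,b\in X$ and take an arbitrary point of the interval $[a,b]$ in $X$; by definition it has the form $\mu_X(a,x,b)$ for some $x\in X$. Then I would apply the $C$-quasi-morphism property of $f$ to the triple $a,x,b$, which gives
$$f(\mu_X(a,x,b))\thicksim_C \langle f(a),f(x),f(b)\rangle_Y=\mu_Y(f(a),f(x),f(b)).$$
The right-hand side lies in $[f(a),f(b)]$ by the very definition of the induced interval structure on $Y$, so $f(\mu_X(a,x,b))\in\N_C([f(a),f(b)])$. Since $x$ was arbitrary, $f([a,b])\subseteq\N_C([f(a),f(b)])$, which is exactly the required condition. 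Together with the fact that $f$ is $\rho_+$-coarse, this shows $f$ is a $(\rho_+,C)$-coarse interval morphism.

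There is no real obstacle here: the statement is essentially a compatibility check between the functorial construction of Theorem \ref{induce cm and ci}(1) on objects and on morphisms, and the constants $\rho_+,C$ transfer verbatim. The only mild point worth flagging is that one should phrase the argument using the operator description of intervals (ranges of the projections $x\mapsto\mu(a,x,b)$) rather than a betweenness description, so that a generic element of $[a,b]$ is literally of the form $\mu_X(a,x,b)$ and the quasi-morphism inequality applies directly; this is consistent with the convention adopted throughout the paper.
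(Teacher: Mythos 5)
Your argument is correct and is essentially the paper's own proof: both take a generic point $\mu_X(a,x,b)$ of $[a,b]$, apply the $C$-quasi-morphism property to place its image within $C$ of $\mu_Y(f(a),f(x),f(b))\in[f(a),f(b)]$, and note that the $\rho_+$-coarseness of $f$ is already part of the hypothesis. Nothing further is needed.
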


\begin{proof}
For any $x,y,z\in X$ we have $f(\mu(x,y,z)_X) \thicksim_C \mu({f(x),f(y),f(z)})_Y$. Hence for $\mu(x,y,z)_X \in [x,y]$ we have $f(\mu(x,y,z)_X) \in \N_C([f(x),f(y)])$. So $f([x,y]) \subseteq \N_C([f(x),f(y)])$ which completes the proof.
\end{proof}

\begin{defn}\label{functor F}
We define a functor $F:\CM\rightarrow \CI$ by setting $F(X,d_X,\mu_X)$ to be the induced coarse interval structure $(X,d_X,[\cdot,\cdot]_X)$ and defining $F[f]=[f]$ on morphisms. This is well defined by Lemma \ref{morphisms} and also restricts to give a functor $F_{\mathcal{S}}: \CMS \rightarrow \CIS$ by Proposition \ref{coarse interval}.
\end{defn}

Now we consider the opposite direction. Theorem \ref{induce cm and ci} (2) provides a functor from $\CI$ to $\CM$ as follows:

\begin{lem}\label{morphisms2}
Let $(X,d_X,[\cdot,\cdot]_X)$ and $(Y,d_Y,[\cdot,\cdot]_Y)$ be objects in the category $\CI$ and let $f:X\rightarrow Y$ be a $(\rho_+,C)$-coarse interval map. Suppose $(X,d_X,\mu_X)$ and $(Y,d_Y,\mu_Y)$ are any induced coarse median structures. Then $f$ is a $(\rho_+, \g(\rho_+(\kappao)+C))$-coarse median map from $(X, d_X, \mu_X)$ to $(Y, d_Y, \mu_Y)$, where $\kappao$ is the parameter in axiom (I1)' for $(X,d_X,[\cdot,\cdot]_X)$ and $\g$ is the parameter in axiom (I3)' for $(Y,d_Y,[\cdot,\cdot]_Y)$.
\end{lem}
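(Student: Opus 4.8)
The plan is to observe first that a coarse interval morphism is by definition already a $\rho_+$-coarse map, so by Definition \ref{coarse median category}(2) the only thing left to check is that $f$ is a quasi-morphism with the stated constant, i.e.\ that $f\big(\mu(a,b,c)_X\big)$ is uniformly close to $\langle f(a),f(b),f(c)\rangle_Y$ for all $a,b,c\in X$, where the second expression denotes the induced coarse median $\mu_Y$ applied to $f(a),f(b),f(c)$.

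Fix $a,b,c\in X$ and set $m:=\mu(a,b,c)_X$. By the construction of the induced coarse median (Theorem \ref{induce cm and ci}(2)) applied in $X$, the point $m$ lies in $\N_\kappao([a,b])\cap\N_\kappao([b,c])\cap\N_\kappao([c,a])$, all intervals taken in $X$. Pushing forward by $f$: since $f$ is $\rho_+$-bornologous, $f(m)\in\N_{\rho_+(\kappao)}\big(f([a,b])\big)\cap\N_{\rho_+(\kappao)}\big(f([b,c])\big)\cap\N_{\rho_+(\kappao)}\big(f([c,a])\big)$, and then the defining property $f([x,y])\subseteq\N_C([f(x),f(y)])$ of a coarse interval morphism improves this to
$$f(m)\in\N_R([f(a),f(b)])\cap\N_R([f(b),f(c)])\cap\N_R([f(c),f(a)]),\qquad R:=\rho_+(\kappao)+C,$$
with all intervals now taken in $Y$. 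On the other hand, $\langle f(a),f(b),f(c)\rangle_Y$ lies, by its very construction in $Y$, in the $\kappao'$-neighbourhoods of these same three intervals, where $\kappao'$ is the parameter of $\I_Y$ occurring in axioms (I1)' and (I3)'. Hence both $f(m)$ and $\langle f(a),f(b),f(c)\rangle_Y$ belong to $\N_{R'}([f(a),f(b)])\cap\N_{R'}([f(b),f(c)])\cap\N_{R'}([f(c),f(a)])$ with $R':=\max\{R,\kappao'\}$, so axiom (I3)' for $\I_Y$ says this set has diameter at most $\g(R')$, whence $f(m)\thicksim_{\g(R')}\langle f(a),f(b),f(c)\rangle_Y$. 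Reading $\g$ as extended to be constant on $[0,\kappao')$ (which is harmless) we have $\g(R')=\g\big(\rho_+(\kappao)+C\big)$, so together with $f$ being $\rho_+$-coarse this shows $f$ is a $(\rho_+,\g(\rho_+(\kappao)+C))$-coarse median morphism.

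This is purely a bookkeeping argument assembled from the axioms (I1)', (I2)', (I3)' and the two defining clauses of a coarse interval morphism, and there is no genuine obstacle. The only points that reward a little attention are keeping careful track of which space each interval is computed in, and arranging all the Hausdorff neighbourhoods at a single common radius lying in the domain $[\kappao',\infty)$ of $\g$ before invoking (I3)' for $Y$ — which is why one passes to $R'=\max\{R,\kappao'\}$ (equivalently, enlarges $C$, or extends $\g$ constantly below $\kappao'$).
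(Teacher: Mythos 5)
Your proof is correct and follows essentially the same route as the paper's: the identical chain of inclusions $f(\mu(a,b,c)_X)\in f\big(\N_{\kappao}([a,b])\cap\cdots\big)\subseteq \N_{\rho_+(\kappao)}(f([a,b]))\cap\cdots\subseteq \N_{\rho_+(\kappao)+C}([f(a),f(b)])\cap\cdots$, followed by (I3)' in $Y$ to bound the distance to $\langle f(a),f(b),f(c)\rangle_Y$. Your extra care about the domain of $\g$ and the comparison of $\rho_+(\kappao)+C$ with the parameter $\kappao'$ of $\I_Y$ (so that the induced median of $Y$ actually lies in the same neighbourhood intersection) is a point the paper's proof passes over silently, and your resolution of it is the right one.
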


\begin{proof}
By definition $f([x,y]) \subseteq \N_C([f(x),f(y)])$ for any $x,y\in X$. Now we have:
\begin{eqnarray*}
  f(\mu(a,b,c)_X) &\in & f(\N_{\kappao}([a,b])\cap \N_{\kappao}([b,c])\cap \N_{\kappao}([c,a]) ) \\
  &\subseteq& \N_{\rho_+(\kappao)}(f([a,b])) \cap \N_{\rho_+(\kappao)}(f([b,c])) \cap \N_{\rho_+(\kappao)}(f([c,a])) \\
   &\subseteq & \N_{C'}([f(a),f(b)]) \cap \N_{C'}([f(b),f(c)]) \cap \N_{C'}([f(c),f(a)]) \\
   &\subseteq & B_{\g(C')}(\mu({f(a),f(b),f(c)})_Y)
\end{eqnarray*}
for $C'=\rho_+(\kappao)+C$ and any $a,b,c \in X$. Hence we have
$$f(\mu(a,b,c)_X) \thicksim_{\g(C')} \langle f(a),f(b),f(c)\rangle_Y,$$
which implies that $f$ is a $(\rho_+, \g(\rho_+(\kappao)+C))$-coarse median map.
\end{proof}

\begin{defn}\label{functor G}
We define a functor $G:\CI\rightarrow \CM$ by setting $G(X,d_X,[\cdot,\cdot]_X)=(X,d_X,\mu_X)$, where $\mu_X$ is some (chosen) induced coarse median on $X$. The choice here is well defined up to equivalence of coarse medians and we  define $G[f]=[f]$ on morphisms. This definition makes sense by Theorem \ref{induce cm and ci} and Lemma \ref{morphisms2} and restricts to give a functor $G_{\mathcal{S}}: \CIS \rightarrow \CMS$ by Theorem \ref{coarse interval converse}.
\end{defn}

\begin{thm}\label{cat equiv}
The functors $F$ and $G$ from Definitions \ref{functor F}, \ref{functor G} provide an equivalence of categories between coarse median structures $\CM$ and coarse interval structures $\CI$.  This equivalence restricts to give an equivalence of categories between coarse median spaces $\CMS$ and coarse interval spaces $\CIS$.
\end{thm}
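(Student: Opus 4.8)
The strategy is to produce natural isomorphisms $\varepsilon\colon GF\Rightarrow\id_{\CM}$ and $\eta\colon FG\Rightarrow\id_{\CI}$ whose components are all represented by the identity map of the underlying metric space. Naturality will be automatic: both $F$ and $G$ send a representative $f$ to the same underlying function (which remains a morphism of the appropriate type by Lemmas \ref{morphisms} and \ref{morphisms2}), so for $[f]\colon\mathcal X\to\mathcal Y$ in $\CM$ the square relating $\varepsilon_{\mathcal Y}\circ GF[f]$ and $[f]\circ\varepsilon_{\mathcal X}$ commutes on the nose, since $\id_Y\circ f=f\circ\id_X$; the same holds for $\eta$. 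Thus the whole content is to check that each component $[\id_X]$ is a categorical isomorphism, for which it suffices --- by Lemmas \ref{char for iso in CM} and \ref{char for iso in CI}, or directly by exhibiting $[\id_X]$ as its own inverse --- to verify that $\id_X$ is a morphism of the relevant type in both directions.

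For the round trip $GF$: given $(X,d,\mu)$ in $\CM$, $F(X,d,\mu)$ is the induced coarse interval structure $\I$ with $[a,b]=\{\mu(a,x,b):x\in X\}$ (a coarse interval structure by Theorem \ref{induce cm and ci}), and $GF(X,d,\mu)=(X,d,\mu')$, where $\mu'(a,b,c)$ is a chosen point of $\N_{\kappao}([a,b])\cap\N_{\kappao}([b,c])\cap\N_{\kappao}([c,a])$. Since $\mu(a,b,c)$ itself lies within $\kappao$ of each of $[a,b]$, $[b,c]$, $[c,a]$, axiom (I3)' forces $\mu'(a,b,c)\thicksim_{\g(\kappao)}\mu(a,b,c)$, so $\mu$ and $\mu'$ are uniformly close. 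Hence $\id_X$ is a $\g(\kappao)$-quasi-morphism and trivially a coarse map in each direction, so it represents morphisms $GF(X,d,\mu)\to(X,d,\mu)$ and $(X,d,\mu)\to GF(X,d,\mu)$ in $\CM$ that compose to the identity; thus $\varepsilon_{(X,d,\mu)}:=[\id_X]$ is an isomorphism (consistently with Lemma \ref{char for iso in CM}, as $\id_X$ is a coarse equivalence).

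For the round trip $FG$: given $(X,d,\I)$ in $\CI$, $G(X,d,\I)=(X,d,\mu)$ with $\mu$ the induced coarse median --- a genuine coarse median by Theorem \ref{coarse interval converse} --- and $FG(X,d,\I)=(X,d,\I')$ with $[a,b]'=\{\mu(a,x,b):x\in X\}$. Each point $\mu(a,x,b)$ lies within $\kappao$ of $[a,b]$, so $[a,b]'\subseteq\N_{\kappao}([a,b])$; conversely, for $c\in[a,b]$, Remark \ref{ends close to intervals} places $c$ in $\N_{3\kappao}([a,c])\cap\N_{3\kappao}([c,b])$ and trivially in $\N_{3\kappao}([a,b])$, while $\mu(a,c,b)\in[a,b]'$ lies in that same intersection, so axiom (I3)' gives $c\thicksim_{\g(3\kappao)}\mu(a,c,b)$, i.e.\ $c\in\N_{\g(3\kappao)}([a,b]')$. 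Therefore $d_H([a,b],[a,b]')$ is uniformly bounded, so $\id_X$ represents coarse interval morphisms $FG(X,d,\I)\to(X,d,\I)$ and $(X,d,\I)\to FG(X,d,\I)$ that compose to the identity, whence $\eta_{(X,d,\I)}:=[\id_X]$ is an isomorphism (in line with Lemma \ref{char for iso in CI}).

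This establishes the equivalence $\CM\simeq\CI$ via $F$ and $G$. For the restriction, the functors already restrict to $F_{\mathcal S}\colon\CMS\to\CIS$ and $G_{\mathcal S}\colon\CIS\to\CMS$ by Definitions \ref{functor F} and \ref{functor G}; moreover, for an object of $\CMS$ (resp.\ $\CIS$) the round trips $GF$ and $FG$ again land in $\CMS$ (resp.\ $\CIS$), because the coarse interval structure induced by a coarse median space satisfies the strong axioms (I1)$\sim$(I3) by Proposition \ref{coarse interval}, and the coarse median operator induced by a coarse interval space satisfies (M1) and (M2) by construction. Since $\CMS$ and $\CIS$ are full subcategories, the identity-map components of $\varepsilon$ and $\eta$ remain isomorphisms there, giving the restricted equivalence $\CMS\simeq\CIS$. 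I expect the only genuinely delicate point to be the Hausdorff bookkeeping in the $FG$ direction --- tracking the constant $\kappao$ and the use of Remark \ref{ends close to intervals} --- with the rest being formal once the cited lemmas are in hand.
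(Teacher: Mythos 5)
Your proposal is correct and follows essentially the same route as the paper: natural transformations with identity-map components, closeness of $\mu$ and the re-induced $\mu'$ forced by axiom (I3)', and the Hausdorff bound $d_H([a,b],[a,b]')$ obtained from Remark \ref{ends close to intervals} together with (I3)'. The only quibble is a harmless constant: $\mu(a,x,b)$ is chosen within $\kappao$ of $[b,a]$, hence within $2\kappao$ (not $\kappao$) of $[a,b]$ via (I1)', exactly as in the paper's bookkeeping, which does not affect the argument.
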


\begin{proof}
We are required to show that $G\circ F$ is naturally isomorphic to $\id_{\CM}$ and that $F\circ G$ is naturally isomorphic to $\id_{\CI}$.

\textbf{(1).} First consider $G\circ F$. Given a metric space $(X,d_X)$ with a coarse median $\mu_X$, by definition $F(X,d_X,\mu_X)$  is the induced coarse interval structure $(X,d_X,[\cdot,\cdot]_X)$. Now apply $G$ to the triple $(X,d_X,[\cdot,\cdot]_X)$ and denote the chosen induced operator by $\mu'_X$. It follows directly from Theorem \ref{induce cm and ci}(3) that $\mu'_X$ and $\mu_X$ are uniformly close. Hence the identity $\id_X: (X,d_X,\mu_X) \to (X,d_X,\mu'_X)$ gives an isomorphism in the category $\CM$. Furthermore, for any coarse median structure $(Y,d_Y, \mu_Y)$ and coarse median map $f: X \rightarrow Y$, the following diagram commutes since $G,F$ do not change the morphisms:
$$\xymatrixcolsep{1.8cm}\xymatrixrowsep{1.8cm}\xymatrix{
  (X,d_X,\mu_X) \ar[r]^-{\textstyle [\id_X]} \ar[d]^{\textstyle \id_\CM([f])} & G\circ F(X,d_X,\mu_X)=(X,d_X,\mu'_X) \ar[d]^{\textstyle G\circ F([f])}  \\
  (Y,d_Y,\mu_Y) \ar[r]^-{\textstyle [\id_Y]} & G\circ F(Y,d_Y,\mu_Y)=(Y,d_Y,\mu'_Y).
  }
$$
Hence the natural transformation $(X,d_X,\mu_X) \mapsto [\id_X]$ gives a natural isomorphism from $\id_{CM}$ to $G\circ F$. This restricts to give a natural isomorphism from $\id_\CMS$ to $G_{\mathcal S}\circ F_\mathcal{S}$.

\textbf{(2).} Next consider $F\circ G$. Given a coarse interval structure $(X,d_X,[\cdot,\cdot]_X)$, we have $G(X,d_X,[\cdot,\cdot]_X)=(X,d_X,\mu_X)$ where $\mu_X$ is the chosen induced coarse median operator on $X$. Apply $F$ to the coarse median structure $(X,d_X,\mu_X)$ and denote the induced interval structure by $(X, d_X, [\cdot,\cdot]_X')$. It follows directly from Theorem \ref{induce cm and ci}(3) that $[\cdot,\cdot]_X$ and $[\cdot,\cdot]'_X$ are uniformly close. Therefore, the identity $\id_X: (X,d_X,[\cdot,\cdot]_X) \to (X,d_X,[\cdot,\cdot]'_X)$ gives an isomorphism in the category $\CI$. Furthermore, for any other coarse interval structure  $(Y,d_Y,[\cdot,\cdot]_Y)$ and  coarse interval map $f: X \rightarrow Y$, the following diagram again clearly commutes:
$$\xymatrixcolsep{1.8cm}\xymatrixrowsep{1.8cm}\xymatrix{
  (X,d_X,[\cdot,\cdot]_X) \ar[r]^-{\textstyle [\id_X]} \ar[d]^{\textstyle \id_\CI([f])} & F\circ G(X,d_X,[\cdot,\cdot]_X)=(X,d_X,[\cdot,\cdot]'_X) \ar[d]^{\textstyle F\circ G([f])}  \\
  (Y,d_Y,[\cdot,\cdot]_Y) \ar[r]^-{\textstyle [\id_Y]} & F\circ G(Y,d_Y,[\cdot,\cdot]_Y)=(Y,d_Y,[\cdot,\cdot]'_Y).
  }
$$
Hence the natural transformation $(X,d_X,[\cdot,\cdot]_X) \mapsto [\id_X]$ gives a natural isomorphism from $\id_\CI$ to $F\circ G$. As usual this restricts to give a natural isomorphism from $\id_\CIS$ to $F_{\mathcal S}\circ G_\mathcal{S}$.
\end{proof}

Combining Propositions \ref{cms equiv cm}, \ref{ci equiv cis}, Theorem \ref{cat equiv} and Corollary \ref{rank preserving}, we obtain the following.

\begin{thm}\label{cat equiv final}
Consider the following diagram:
\begin{displaymath}
\xymatrix@=1.25cm{
  \CM \ar@/^/[r]^{\textstyle F} & \CI \ar@/^/[l]^{\textstyle G}  \\
  \CMS \ar[u]^{\textstyle\iota_{\mathcal{M}}} \ar@/^/[r]^{\textstyle F_{\mathcal{S}}}
                & \CIS.  \ar[u]_{\textstyle\iota_{\mathcal{I}}} \ar@/^/[l]^{\textstyle G_{\mathcal{S}}}
  }
\end{displaymath}
We have:
\begin{itemize}
  \item $F \circ \iota_{\mathcal{M}} = \iota_{\mathcal{I}} \circ F_{\mathcal{S}}$;
  \item $\iota_{\mathcal{M}} \circ G_{\mathcal{S}} = G \circ \iota_{\mathcal{I}}$;
  \item $\iota_{\mathcal{M}}$ gives an equivalence of categories between $\CMS$ and $\CM$;
  \item $\iota_{\mathcal{I}}$ gives an equivalence of categories between $\CIS$ and $\CI$;
  \item $(F,G)$ gives an equivalence of categories between $\CM$ and $\CI$;
  \item $(F_{\mathcal{S}},G_{\mathcal{S}})$ gives an equivalence of categories between $\CMS$ and $\CIS$.
\end{itemize}
Furthermore all of these functors preserve rank in the sense of coarse median structures and coarse interval structures.
\end{thm}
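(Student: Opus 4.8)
The plan is straightforward assembly: each clause of the statement is already supplied by an earlier result, so the proof consists of matching them up, together with two strict equalities of functors that are essentially definitional.

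First I would dispatch the two identities. By Definition~\ref{functor F} the functor $F$ restricted to the full subcategory $\CMS$ is, by construction, $F_{\mathcal{S}}$: on objects both send $(X,d_X,\mu_X)$ to $(X,d_X,\I_X)$ with $\I_X$ the induced coarse interval structure, which lies in $\CIS$ by Proposition~\ref{coarse interval}; on morphisms both act as $[f]\mapsto[f]$. Since $\iota_{\mathcal{M}},\iota_{\mathcal{I}}$ are the inclusions, this gives $F\circ\iota_{\mathcal{M}}=\iota_{\mathcal{I}}\circ F_{\mathcal{S}}$ on the nose. The second identity $\iota_{\mathcal{M}}\circ G_{\mathcal{S}}=G\circ\iota_{\mathcal{I}}$ is symmetric, via Definition~\ref{functor G}; here the point worth recording is that for an interval structure $\I$ satisfying (I1)--(I3) the induced ternary operator automatically satisfies (M1) and (M2) — the intersection defining $\mu(a,a,b)$ is $[a,a]\cap[a,b]\cap[b,a]=\{a\}$, and symmetry is built into the choice of selected point — so $G|_{\CIS}$ really does land in $\CMS$ and coincides with $G_{\mathcal{S}}$.

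Next I would quote the equivalences: $\iota_{\mathcal{M}}$ is an equivalence by Proposition~\ref{cms equiv cm}; $\iota_{\mathcal{I}}$ is an equivalence by Proposition~\ref{ci equiv cis}; $(F,G)$ is an equivalence $\CM\simeq\CI$ and $(F_{\mathcal{S}},G_{\mathcal{S}})$ an equivalence $\CMS\simeq\CIS$, both from Theorem~\ref{cat equiv} (the second can alternatively be deduced from the first using the two $\iota$-equivalences and the commutativity identities just established). For the rank clause I would invoke Corollary~\ref{rank preserving}: each of $F,G,F_{\mathcal{S}},G_{\mathcal{S}}$ is the identity on the underlying metric space and merely trades a coarse median for its induced coarse interval structure or vice versa, so Corollary~\ref{rank preserving} says it carries an object of rank at most $n$ to an object of rank at most $n$ in either direction; and $\iota_{\mathcal{M}},\iota_{\mathcal{I}}$ preserve rank trivially.

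There is no serious obstacle here — the substantive work was done in Theorem~\ref{cat equiv}, Propositions~\ref{cms equiv cm} and~\ref{ci equiv cis}, and Corollary~\ref{rank preserving}. The only delicate point is cosmetic: since $G\circ F$ and $F\circ G$ are naturally isomorphic to, rather than equal to, the identity functors, one should observe that the rank is unaffected by replacing a coarse median (resp.\ a coarse interval structure) by a uniformly close one — immediate from Theorem~\ref{hyper rank} and the definition of rank for coarse interval spaces, whose defining inequalities use non-decreasing control functions that absorb any bounded perturbation. With this noted, the theorem follows by concatenating the cited results.
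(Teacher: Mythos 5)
Your proposal is correct and matches the paper exactly: the paper gives no separate argument, simply stating that the theorem follows by combining Propositions \ref{cms equiv cm} and \ref{ci equiv cis}, Theorem \ref{cat equiv} and Corollary \ref{rank preserving}, which is precisely the assembly you carry out (your extra checks of the two definitional equalities and of rank-stability under uniform closeness are consistent with, and slightly more explicit than, the paper's treatment).
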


\begin{rem}
We finally note that one can restrict the allowed metric spaces to quasi-geodesic spaces.  In this case the above equivalences of categories restrict to equivalences between the full subcategories of quasi-geodesic coarse median spaces and quasi-geodesic coarse interval spaces.
\end{rem}

\subsection{Comparing the categories of coarse median algebras and coarse median spaces}

In the spirit of Section \ref{The coarse median (space) category}, we now consider the category of bounded valency coarse median algebras. By analogy with the notion of coarse median map, we define a \emph{coarse median algebra homomorphism} from $(X, \mu_X)$ to $(Y, \mu_Y)$ to be a function $f:X\to Y$ such that

\begin{enumerate}
\item there exist a  constant $C$ such that that for all $a,b,c\in X$,
\[
\sharp[\mu({f(a),f(b),f(c)})_Y, f(\mu(a,b,c)_X)]_Y\leq C;
\]
\item there exists a non-decreasing function $\rho:\Rp\rightarrow \Rp$ such that for all $a,b\in X$,
\[
\sharp[f(a), f(b)]\leq \rho(\sharp[a,b]);
\]
\item $f$ is finite-to-$1$.
\end{enumerate}

Where we need to keep track of $C, \rho$ we will refer to $f$ as a $(C,\rho)$-coarse median algebra homomorphism.

Note that the requirement of ``finite-to-$1$" is analogous to properness in the definition of coarse  map. When $C$ can be taken to be $1$ then $\mu({f(a),f(b),f(c)})_Y= f(\mu(a,b,c)_X)$ and $f$ is a homomorphism of ternary algebras.  In particular if $X$ and $Y$ are median algebras and $C=1$ then $f$ is a homomorphism of median algebras and the second and third conditions require that $f$ is also a coarse map in the geometric sense. From the algebraic point of view one would not expect the latter conditions to be required, however without them the composition of coarse median algebra homomorphisms would not in general yield another coarse median algebra homomorphism (cf. Remark \ref{composition of morphisms}).

\begin{lem}\label{lem:composition of coarse median algebra morphisms}
Let $f: (X,\mu_X) \rightarrow (Y,\mu_Y)$ and $g: (Y,\mu_Y) \rightarrow (Z,\mu_Z)$ be coarse median algebra homomorphisms between bounded valency coarse median algebras $(X,\mu_X), (Y,\mu_Y)$ and $(Z,\mu_Z)$. Then the composition $gf:(X,\mu_X) \rightarrow (Z,\mu_Z)$ is a coarse median algebra homomorphism as well.
\end{lem}

\begin{proof}
Let $f$  be a  $(C_f, \rho_f)$-coarse median algebra homomorphism and $g$   a  $(C_g, \rho_g)$-coarse median algebra homomorphism. Since $(Z,\mu_Z)$ has bounded valency, Lemma \ref{bdd geo} and Theorem \ref{unique metric prop} imply that the induced metric $d_{\mu_Y}$ has bounded geometry and $(Z,d_{\mu_Z}, \mu_Z)$ is a coarse median space. From Lemma \ref{finiteness} there exists a non-decreasing function $\phi_Z:\Rp\rightarrow \Rp$ such that for any $u,v\in Z$ we have
\[
d_{\mu_Z}{}(u,v) < \sharp [u,v]_Z\leq \phi_Z(d_{\mu_Z}{}(u,v)).
\]
Now for any $a,b,c\in X$ we have:
\begin{align*}
d_{\mu_Z}&(g(\langle f(a), f(b),f(c) \rangle_Y), gf(\mu(a,b,c)_X)) \leq \sharp [g(\langle f(a), f(b),f(c) \rangle_Y), gf(\mu(a,b,c)_X)]_Z \\
&\leq \rho_g(\sharp [\langle f(a), f(b),f(c) \rangle_Y, f(\mu(a,b,c)_X)]_Y) \leq \rho_g(C_f),
\end{align*}
and
\begin{align*}
&d_{\mu_Z}(g(\langle f(a), f(b),f(c) \rangle_Y), \langle gf(a), gf(b), gf(c) \rangle_Z) \\
&\leq \sharp [g(\langle f(a), f(b),f(c) \rangle_Y), \langle gf(a), gf(b), gf(c) \rangle_Z]_Z \\
&\leq C_g.
\end{align*}
Hence
\begin{align*}
&\sharp[gf(\mu(a,b,c)_X), \langle gf(a), gf(b), gf(c) \rangle_Z]_Z \\
&\leq \phi_Z(d_{\mu_Z}(gf(\mu(a,b,c)_X), \langle gf(a), gf(b), gf(c) \rangle_Z)) \\
&\leq \phi_Z(\rho_g(C_f) + C_g).
\end{align*}
Thus condition (1) holds for $gf$. As for condition (2), we have
\[
\sharp [gf(a),gf(b)]_Z \leq \rho_g(\sharp [f(a),f(b)]_Y) \leq \rho_g\rho_f([a,b]_X).
\]
Finally, since  $f,g$ are finite-to-$1$, their composition $gf$ is finite-to-$1$ as well so $gf$ is  a coarse median algebra homomorphism.
\end{proof}

Two coarse median algebra homomorphisms $f,g$ are said to be \emph{equivalent}, denoted by $f \sim g$, if there is a constant $D$ such that for all $x\in X$, $\sharp[ f(x), g(x)]_Y\leq D$. Now Lemma \ref{lem:composition of coarse median algebra morphisms} allows us to make the following definition:

\begin{defn}
The \emph{bounded valency coarse median algebra category}, denoted by $\mathcal{BCMA}$, is defined as follows:
\begin{itemize}
  \item  The objects are coarse median algebras $(X,\mu_X)$ of bounded valency;
  \item Given two objects: $\mathcal{X}=(X,\mu_X)$ and $\mathcal{Y}=(Y,\mu_Y)$, the morphism set is
  $$
  \{\mbox{~coarse~median~algebra~homomorphisms~from~}X\mbox{~to~}Y~\}/\sim;
  $$
  \item Compositions are induced by compositions of homomorphisms.
\end{itemize}
\end{defn}

Let $\mathcal{BCMS}$ denote the full subcategory of $\CMS$ whose objects are bounded geometry coarse median spaces. We will construct a functor $H: \mathcal{BCMA} \rightarrow \mathcal{BCMS}$ given by equipping each coarse median algebra $(X, \mu_X)$ with its induced metric.

\begin{lem}\label{lem: functor H}
Defining $H([f]) = [f]$ makes this a functor.
\end{lem}

\begin{proof}
We first show that for bounded valency coarse median algebras $(X,\mu_X)$, $(Y,\mu_Y)$ and a $(C_f, \rho_f)$-coarse median algebra homomorphism $f: X \to Y$, $f$ is also a coarse median map between $(X,d_{\mu_X},\mu_X)$ and $(Y,d_{\mu_Y},\mu_Y)$.

Clearly for any $a,b,c\in X$, the distance between $\langle f(a),f(b),f(c) \rangle_Y$ and $f(\mu(a,b,c)_X)$ is bounded by the cardinality of the associated interval, which is uniformly bounded by $C_f$. Hence $f$ is a $C_f$-quasi-morphism. As in the proof of Lemma \ref{lem:composition of coarse median algebra morphisms}, $(X,\mu_X)$ having bounded valency implies that there exists a non-decreasing function $\phi_X:\Rp\rightarrow \Rp$ such that for any $u,v\in X$, we have
\[
d_{\mu_X}{}(u,v) < \sharp [u,v]_X\leq \phi_X(d_{\mu_X}{}(u,v)).
\]
This implies
\[
d_{\mu_Y}(f(u),f(v)) \leq \sharp[f(u),f(v)]_Y \leq \rho_f([u,v]_X) \leq \rho_f\circ\phi_X(d(u,v)).
\]
Hence $f$ is $\rho_f\circ\phi_X$-bornologous. Bounded geometry and the fact that $f$ is finite-to-$1$ imply that $f$ is proper. Therefore, $f$ is a coarse median map.

Now suppose that $g:X\rightarrow Y$ is a coarse median algebra homomorphism which is equivalent to $f$. Now consider $d_{\mu_Y}(f(x), g(x))\leq \card[f(x), g(x)]_Y$. By assumption the latter is bounded hence $f$ is close to $g$.
\end{proof}

While the forgetful map which converts a bounded valency, bounded geometry coarse median space to the underlying coarse median algebra is a left inverse to $H$, this is not in general functorial.

\begin{figure}[h]
\begin{center}
\includegraphics[scale=0.6]{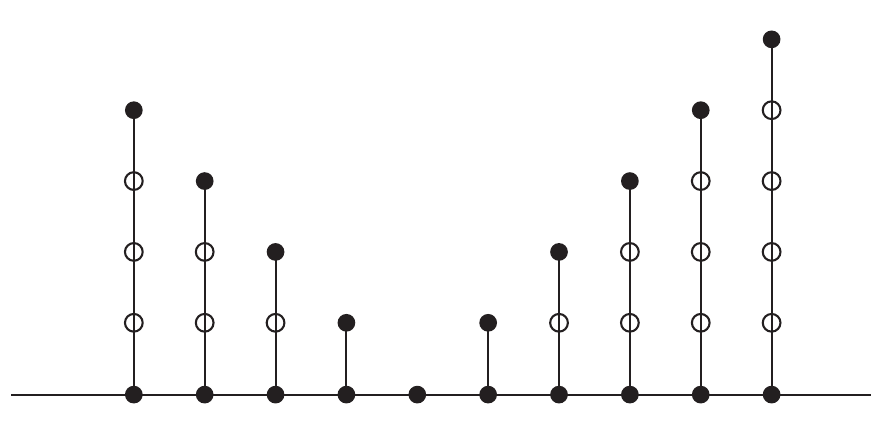}
\caption{The tree $T$ with the subspace $X$ identified by the solid vertices}
\label{tree_example}
\end{center}
\end{figure}

\begin{ex}\label{ex for non-functor}
Consider the tree $T$ obtained from $\mathbb Z$ by adding a spike of length $|n|$  to each integer $n$. All edges are taken to have length $1$. As a tree this is naturally a discrete median space and can be viewed as a coarse median space with its natural path metric. Now take the subspace $X$ consisting of the original points of $\mathbb Z$, together with the leaves of the tree, and equip this with the subspace metric (see Figure \ref{tree_example}). This is a median subalgebra and the inclusion is a morphism of coarse median spaces. However it is not a morphism of coarse median algebras, since taking $a$ to be the leaf on the spike based at the integer $b$ the interval $[a,b]_X$ has cardinality $2$, while its image in $T$ has cardinality $|b|+1$ contravening the second condition. Once again this illustrates that it is possible to endow a coarse median algebra with a metric which does not fully respect the algebraic structure. However restricting to the quasi-geodesic world, or more generally, imposing the induced metric prevents these problems and makes the forgetful map functorial.
\end{ex}

Just as CAT(0) cube complexes can be studied combinatorially as median algebras, we can apply Theorem \ref{unique metric prop 2} to obtain the following theorem showing that coarse median spaces can be studied as coarse median algebras.

\newcommand{\uniquemetricthm}{The forgetful functor, together with the induced metric functor provide an equivalence of categories between the category of \textbf{bounded geometry, quasi-geodesic coarse median spaces} and the category of  \textbf{bounded valency, quasi-geodesic coarse median algebras}, and this equivalence preserves rank.
}

\begin{thm}
The forgetful functor, together with the ``induced metric'' functor provide an equivalence of categories from \emph{bounded geometry quasi-geodesic coarse median spaces} to \emph{bounded valency quasi-geodesic coarse median algebras}, and this equivalence preserves rank.
\end{thm}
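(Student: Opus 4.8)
The plan is to name the two functors, verify each sends objects and morphisms into the intended category, and then produce the two natural isomorphisms; rank preservation follows at the end from Lemma~\ref{cma rank lemma} and the stability of rank under quasi-isometric change of metric. Write $U$ for the forgetful functor, sending a bounded geometry, quasi-geodesic coarse median space $(X,d,\mu)$ to the coarse median algebra $(X,\mu)$, and $M$ for the induced-metric functor, sending a bounded valency, quasi-geodesic coarse median algebra $(X,\mu)$ to $(X,d_\mu,\mu)$. Two facts do all the work. First, in any bounded valency coarse median algebra, Lemma~\ref{finiteness} gives a fixed non-decreasing function $C$ with $d_\mu{}(a,b) < \card[a,b] \leqslant C(d_\mu{}(a,b))$. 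Second, by Theorem~\ref{uniquemetricthm}, on a bounded geometry, quasi-geodesic coarse median space every coarse median metric is quasi-isometric to $d_\mu$.

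\textbf{Objects.} For $M$: given a bounded valency, quasi-geodesic coarse median algebra $(X,\mu)$, Theorem~\ref{unique metric prop 2} (combining Theorem~\ref{unique metric prop}, Lemma~\ref{bdd geo} and Proposition~\ref{quasi-geodesic}) tells us the induced metric $d_\mu$ makes $(X,d_\mu,\mu)$ a bounded geometry, quasi-geodesic coarse median space. For $U$: given such a space $(X,d,\mu)$, the pair $(X,\mu)$ satisfies (M1), (M2) and has finite intervals by Lemma~\ref{finiteness}; since $d_\mu$ is quasi-isometric to $d$ by Theorem~\ref{uniquemetricthm}, is uniformly discrete, and $d$ has bounded geometry, $d_\mu$ has bounded geometry, so $(X,\mu)$ has bounded valency by Lemma~\ref{bdd geo}; and since $(X,\mu)$ then admits the bounded geometry, quasi-geodesic coarse median metric $d$, Theorem~\ref{unique metric prop 2} makes it a quasi-geodesic coarse median algebra.

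\textbf{Morphisms.} This is the crux. Using the two displayed facts one shows that, after enlarging the control data, a $\rho_+$-coarse $C$-quasi-morphism between coarse median \emph{spaces} carries exactly the information of a coarse median \emph{algebra} map (a finite-to-$1$ map satisfying conditions (1) and (2) of that definition), and that two maps are close for the metric precisely when the cardinalities $\card[f(x),g(x)]$ are uniformly bounded. In detail: properness together with bounded geometry forces a coarse median morphism to be finite-to-$1$; the $C$-quasi-morphism inequality together with $\card[\cdot,\cdot]\leqslant C(d(\cdot,\cdot))$ gives condition (1); the bornologous bound together with $d < \card[\cdot,\cdot] \leqslant C(d(\cdot,\cdot))$ gives condition (2); conversely a coarse median algebra map is proper and bornologous for the induced metrics, hence a coarse map, and condition (1) promotes it to a quasi-morphism. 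This shows $U$ and $M$ are well defined on morphism classes (the two notions of equivalence of maps matching up via the same comparison), and composition is respected because this metric/cardinality comparison is exactly what the paper already used to make composition of coarse median algebra maps functorial.

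\textbf{Natural isomorphisms and rank.} On objects and morphisms $U\circ M$ is the identity functor (underlying sets, operators and maps are unchanged, and by the Morphisms step the two notions of morphism equivalence agree). For $M\circ U$ we have $M\circ U(X,d,\mu) = (X,d_\mu,\mu)$, and by Theorem~\ref{uniquemetricthm} the identity map $\id_X\colon(X,d,\mu)\to(X,d_\mu,\mu)$ is a quasi-isometry and, being the identity, a $0$-quasi-morphism, hence a coarse median isomorphism; the family $\{\id_X\}$ is natural since all the relevant squares commute on the nose, giving $\id\Rightarrow M\circ U$. For rank: the rank of $(X,d,\mu)$ equals that of $(X,d_\mu,\mu)$ because the thin $(n+1)$-cubes characterisation of Theorem~\ref{hyper rank}(3) survives the quasi-isometry $d\leftrightarrow d_\mu$, up to replacing the control function $\varphi$; and the rank of $(X,d_\mu,\mu)$ equals the rank of the coarse median algebra $(X,\mu)$ by Lemma~\ref{cma rank lemma}. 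Hence both functors preserve rank. The genuinely substantive step is Morphisms --- recasting the metric notions of coarse median morphism and closeness as the purely combinatorial ones of coarse median algebra map and equivalence, and checking the resulting notions of morphism equivalence coincide; everything else is bookkeeping over Theorems~\ref{uniquemetricthm} and~\ref{unique metric prop 2}, Lemma~\ref{finiteness} and Lemma~\ref{cma rank lemma}.
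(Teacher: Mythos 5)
Your proposal is correct and follows essentially the same route as the paper, which obtains the theorem by assembling Theorem \ref{unique metric prop 2} (for objects), the comparison $d_\mu(a,b) < \card[a,b] \leqslant C(d_\mu(a,b))$ from Lemma \ref{finiteness} together with Theorem \ref{uniquemetricthm} (for translating morphisms and their equivalence, which is exactly where quasi-geodesicity is needed, as the paper's tree example shows), and Lemma \ref{cma rank lemma} for rank. Your write-up simply makes this assembly explicit, including the natural isomorphism $\id \Rightarrow M\circ U$ via the identity map being a coarse median isomorphism between $(X,d,\mu)$ and $(X,d_\mu,\mu)$.
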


\begin{proof}
First let us show that the forgetful map is indeed a functor in this case. Let $(X,\mu_X,d_X)$ and $(Y,\mu_Y,d_Y)$ be two bounded geometry quasi-geodesic coarse median spaces, and $f:X \rightarrow Y$ be a coarse median map. Theorem \ref{unique metric prop 2} implies that $(X,\mu_X)$ and $(Y,\mu_Y)$ are bounded valency quasi-geodesic coarse median algebras and for some $L',C'>0$ the induced metrics $d_{\mu_X}, d_{\mu_Y}$ are $(L',C')$-quasi-isometric to $d_X, d_Y$, respectively.

Since $f$ is a quasi-morphism there exists $C>0$ such that for any $a,b,c \in X$
\[
\mu({f(a),f(b),f(c)})_Y \thicksim_C f\bigl(\mu(a,b,c)_X\bigr).
\]
Since $(Y,d_Y)$ has bounded valency, Lemma \ref{finiteness} provides a non-decreasing function $\phi_Y:\Rp\rightarrow \Rp$ such that for any $u,v\in Y$, we have $\sharp [u,v]_Y\leq \phi_Y(d_Y(u,v))$. Thus we obtain:
\[
\sharp [\mu({f(a),f(b),f(c)})_Y, f(\mu(a,b,c)_X)] \leq \phi_Y(C).
\]
On the other hand, assume that $f$ is $\rho_f$-coarse, then we have:
\begin{align*}
\sharp [f(a),f(b)]_Y &\leq \phi_Y(d_Y(f(a),f(b))) \leq \phi_Y \circ \rho_f (d_X(a,b)) \leq \phi_Y \circ \rho_f (L'd_{\mu_X}(a,b)+C')\\
&\leq \phi_Y \circ \rho_f (L'\sharp[a,b]_X+C').
\end{align*}
Note that we use the fact that $d_X$ and $d_{\mu_X}$ are $(L',C')$-quasi-isometric in the third inequality, which fails in Example \ref{ex for non-functor}, and this is the only place we need the condition of quasi-geodesity. Finally properness and bounded geometry imply that $f$ is finite-to-$1$. In conclusion, we have shown that $f$ is a coarse median algebra homomorphism which implies that the forgetful map is indeed a functor.

The rest of the statement follows directly from Theorem \ref{unique metric prop 2} and Lemma \ref{cma rank lemma}.
\end{proof}

Finally we note that while the definition of a coarse median space requires affine control (in axiom (C1)) the morphisms in the category $\CMS$ are only required to be coarse, not large-scale Lipschitz.  As mentioned in \cite{niblo2017four}, this suggests a generalisation of the notion of coarse median to allow bornologous control:  replace  the affine control axiom (C1) with the requirement that the map $a\mapsto \mu(a,b,c)$ is bornologous uniformly in $b,c$. Many of the results of this paper should carry over to this context.

However this does not greatly extend the class of examples for the following reasons.  Firstly in the quasi-geodesic case this is not a generalisation as bornologous control implies affine control.  More generally suppose that $(X,d,\mu)$ is a triple satisfying the generalised bornologous version of (C1) along with (M1), (M2) and (C2). If this has bounded geometry then the proof of Theorem \ref{unique metric prop} (3) $\implies$ (1) remains valid to show that $(X,\mu)$ is a coarse median algebra.  If moreover we have bounded valency then by Theorem \ref{unique metric prop}, $(X,d_\mu,\mu)$ is a coarse median space in the usual (affine) sense.  Thus, in the context of bounded geometry bounded valency spaces, the metric can always be adjusted to ensure affine control.

\bibliographystyle{plain}
\bibliography{bibfileCMA}

\end{document}